\documentclass[10pt,twoside]{article}
\usepackage[hmarginratio=1:1,a4paper,text={16cm,23cm}]{geometry}
\usepackage[english]{babel}
\usepackage[utf8]{inputenc}
\usepackage[T1]{fontenc}
\setlength{\parskip}{10pt}

\bibliographystyle{alpha}

\usepackage{amsmath,amsfonts,amssymb,amsthm}
\usepackage{tikz}
	\usetikzlibrary{arrows,calc,decorations.text,decorations.pathreplacing}
\usepackage{dsfont,color,graphicx,mathrsfs,enumerate,pdfpages,setspace}
\usepackage{caption}\captionsetup{justification=centering}
\usepackage{url}
\usepackage[backref=page]{hyperref}
\hypersetup{allcolors=blue,colorlinks=true,breaklinks=true,bookmarksopen=true}

\usepackage{amsthm}
\newcounter{count}[section]\numberwithin{count}{section}
\newtheorem{theorem}[count]{Theorem}

\newtheorem{proposition}[count]{Proposition}

\newtheorem{lemma}[count]{Lemma}
\newtheorem{assumption}[count]{Assumption}
\numberwithin{equation}{section}

\makeatletter
\renewcommand{\thefigure}{\ifnum \c@section>\z@ \thesection.\fi
 \@arabic\c@figure}
\@addtoreset{figure}{section}
\makeatother

\makeatletter
\renewenvironment{proof}[1][\proofname]{\par
  \pushQED{\qed}%
  \normalfont \topsep6\p@\@plus6\p@\relax
  \trivlist
  \item[\hskip\labelsep
        \bfseries
    #1\@addpunct{\scantokens{:}}]\ignorespaces
}{%
  \popQED\endtrivlist\@endpefalse
}
\makeatother

\title{Ergodicity of inhomogeneous Markov chains through asymptotic pseudotrajectories}

\author{Michel \textsc{Benaïm}, Florian \textsc{Bouguet}, Bertrand \textsc{Cloez}}

\date{
    \emph{Université de Neuchâtel}\\
    \emph{Inria team BIGS, IECL}\\
    \emph{INRA-SupAgro MISTEA}\\[2ex]
    September 15, 2016
}

\usepackage{fancyhdr}
		\makeatletter
			\let\runauthor\@author
			\let\runtitle\@title
		\makeatother
        \fancypagestyle{main}{
                \fancyhf{}
                \fancyhead[CE]{\runtitle}
                \fancyhead[CO]{\runauthor}
                \fancyfoot[c]{\thepage}
                
                }
        \fancypagestyle{plain}{
                \fancyhf{}
                \fancyfoot[c]{\thepage}
                
                }

\newenvironment{remark}[1][]{\par\noindent\refstepcounter{count}\textbf{Remark}
\ifx\newenvironment#1\newenvironment
	\textbf{\arabic{section}.\arabic{count}.}
\else
	\textbf{\arabic{section}.\arabic{count}} (#1)\textbf{.}
\fi}{\leavevmode\unskip\penalty9999 \hbox{}\nobreak\hfill\quad\hbox{{$\diamondsuit$}}}
\newenvironment{acknowledgements}{\noindent\textbf{Acknowledgements:}}{}

\definecolor{darkred}{rgb}{0.9,0.1,0.1}

\setlength{\marginparwidth}{2cm}


\newcommand*{\e}{\text{e}}
\newcommand*{\E}{\mathbb{E}}

\newcommand*{\N}{\mathbb{N}}
\newcommand*{\prob}{\mathbb{P}}

\newcommand*{\R}{\mathbb{R}}

\newcommand*{\indic}{\mathds{1}}
\newcommand*{\Wass}{W}

\newcommand*{\sgn}{\text{sgn}}

\begin{document}

\pagestyle{main}
\maketitle
\begin{center}
\begin{minipage}[c]{.8\textwidth}
\textbf{Abstract:} In this work, we consider an inhomogeneous (discrete time) Markov chain and are interested in its long time behavior. We provide sufficient conditions to ensure that some of its asymptotic properties can be related to the ones of a homogeneous (continuous time) Markov process. Renowned examples such as a bandit algorithms, weighted random walks or decreasing step Euler schemes are included in our framework. Our results are related to functional limit theorems, but the approach differs from the standard "Tightness/Identification" argument; our method is unified and based on the notion of pseudotrajectories on the space of probability measures.

\tableofcontents

\vspace{1cm}

\noindent\textbf{Keywords:} Markov chain, Markov process, asymptotic pseudotrajectory, quantitative ergodicity, random walk, bandit algorithm, decreasing step Euler scheme.

\noindent\textbf{MSC 2010:} Primary 60J10, Secondary 60J25, 60B10.
\end{minipage}
\end{center}

\section{Introduction}\label{section:intro}
In this paper, we consider an inhomogeneous Markov chain $(y_n)_{n\geq 0}$ on $\R^D$, and a non-increasing sequence $(\gamma_n)_{n\geq1}$ converging to 0, such that $\sum_{n=1}^\infty\gamma_n=+\infty$. For any smooth function $f$, we set
\begin{equation}
\mathcal L_nf(y):=\frac{\E\left[f(y_{n+1})-f(y_n)|y_n=y\right]}{\gamma_{n+1}}.
\label{eq:DefLn}
\end{equation}
We shall establish general asymptotic results when $\mathcal{L}_n$ converges, in some sense explained below, toward some infinitesimal generator $\mathcal{L}$. We prove that, under reasonable hypotheses, one can deduce properties (trajectories, ergodicity, etc) of $(y_n)_{n \geq1}$ from the ones of a process generated by~$\mathcal{L}$.

This work is mainly motivated by the study of the rescaling of stochastic approximation algorithms (see e.g. \cite{Ben99,LP13}). Classically, such rescaled algorithms converge to Normal distributions (or linear diffusion processes); see e.g. \cite{Duf96,KY03,For15}. This central limit theorem is usually proved with the help of "Tightness/Identification" methods. With the same structure of proof, Lamberton and Pagès get a different limit in \cite{LP08}; namely, they provide a convergence to the stationary measure of a non-diffusive Markov process. Closely related, the decreasing step Euler scheme (as developed in \cite{LP02,Lem05}) behaves in the same way.

In contrast to this classical approach, we rely on the notion of asymptotic pseudotrajectories introduced in \cite{BH96}. Therefore, we focus on the asymptotic behavior of $\mathcal L_n$ using Taylor expansions to deduce immediately the form of a limit generator $\mathcal L$. A natural way to understand the asymptotic behavior of $(y_n)_{n\geq0}$ is to consider it as an approximation of a Markov process generated by $\mathcal L$. Then, provided that the limit Markov process is ergodic and that we can estimate its speed of convergence toward the stationary measure, it is natural to deduce convergence and explicit speeds of convergence of $(y_n)_{n\geq0}$ toward equilibrium. Our point of view can be related to the Trotter-Kato theorem (see e.g. \cite{Kal02}). The proof of our main theorem, Theorem~\ref{theorem:APTrPT} below, is related to Lindeberg's proof of the central limit theorem; namely it is based on a telescopic sum and a Taylor expansion.

With the help of Theorem~\ref{theorem:APTrPT}, the study of the long time behavior of $(y_n)_{n\geq0}$ reduces to the one of a homogeneous-time Markov process. Their convergence has been widely studied in the litterature, and we can differentiate several approaches. For instance, there are so-called "Meyn-and-Tweedie" methods (or Foster-Lyapunov criteria, see \cite{MT93b,HM11,HMS11,CH15}) which provide qualitative convergence under mild conditions; we can follow this approach to provide qualitative properties for our inhomogeneous Markov chain. However, the speed is usually not explicit or very poor. Another approach consists in the use of \textit{ad hoc} coupling methods (see e.g. \cite{Lin92,Ebe11,Bou15}) either for a diffusion or a piecewise deterministic Markov process (PDMP). Those methods usually prove themselves to be efficient for providing explicit speeds of convergence, but rely on extremely particular strategies. Among other approaches, let us also mention functional inequalities or spectral gap methods (see e.g. \cite{Bak94,ABC+00,Clo12,Mon14}).

In this article, we develop a unified approach to study the long time behavior of inhomogeneous Markov chains, which may also provide speeds of convergence or functional convergence. To our knowledge, this method is original, and Theorems~\ref{theorem:APTrPT} and \ref{theorem:SpeedConv} have the advantage of being self-contained. The main goal of our illustrations, in Section~\ref{section:illustrations}, is to provide a simple framework to understand our approach. For these examples, proofs seem more simple and intuitive, and we are able to recover classical results as well as slight improvements.

This paper is organized as follows. In Section~\ref{section:results}, we state the framework and the main assumptions that will be used throughout the paper. We recall the notion of asymptotic pseudotrajectory, and present our main result, Theorem~\ref{theorem:APTrPT}, which describes the asymptotic behavior of a Markov chain. We also provide two consequences, Theorems~\ref{theorem:SpeedConv} and \ref{theorem:functionalCV}, precising the geometric ergodicity of the chain or its functional convergence. In Section~\ref{section:illustrations}, we illustrate our results by showing how some renowned examples, including weighted random walks, bandit algorithms or decreasing step Euler schemes, can be easily studied with this unified approach. In Section~\ref{section:proofs} and \ref{section:appendix}, we provide the proofs of our main theorems and of the technical parts left aside while dealing with the illustrations.

\section{Main results}\label{section:results}
\subsection{Framework}\label{subsection:framework}
We shall use the following notation in the sequel:
\begin{itemize}
	\item A multi-index is a $D$-tuple $N=(N_1,\dots,N_D)\in \N^D$; we define the order $N\leq\widetilde N$ if, for all $1\leq i\leq D,N_i\leq\widetilde N_i$. We define $|N|=\sum_{i=1}^DN_i$ and and we identify an integer $N$ with the multi-index $(N,\dots,N)$.
	\item For some multi-index $N$, $\mathscr C^N$ is the set of functions $f:\R^D\to\R$ which are $N_i$ times continuously differentiable in the direction $i$. For any $f\in\mathscr C^N(\R^D),$ we define
	\[f^{(N)}=\frac{\partial^{|N|}}{\partial_{x_1}^{N_1}\dots\partial_{x_D}^{N_D}}f,\quad\|f^{(N)}\|_\infty=\sup_{x\in\R^D}|f^{(N)}(x)|.\]
	\item $\mathscr C^N_b$ is the set of $\mathscr C^N$ functions such that $\sum_{j\leq N}\|f^{(j)}\|_{\infty}<+\infty$. Also, $\mathscr C^N_c$ is the set of $\mathscr C^N$ functions with compact support, and $\mathscr C_0^N$ is the set of $\mathscr C^N$ functions such that $\lim_{\|x\|\to\infty}f(x)=0.$
	\item $\mathscr L(X)$ is the law of a random variable $X$ and $\text{Supp}(\mathscr L(X))$ its support.
	\item $x\wedge y:=\min(x,y)$ and $x\vee y:=\max(x,y)$ for any $x,y\in\R$.
	\item For some multi index $N,\chi_N(x):=\sum_{i=1}^D\sum_{k=0}^{N_i}|x_i|^k$ for $x\in\R^D$.
\end{itemize}

Let us recall some basics about Markov processes. Given a homogeneous Markov process $(X_t)_{t\geq0}$ with càdlàg trajectories a.s., we define its Markov semigroup $(P_t)_{t\geq0}$ by
\[P_tf(x)=\E[f(X_t)\ |\ X_0=x].\]
It is said to be Feller if, for all $f\in\mathscr C_0^0$, $P_tf\in\mathscr C_0^0$ and $\lim_{t\to0}\|P_tf-f\|_\infty=0$. We can define its generator $\mathcal L$ acting on functions $f$ satisfying $\lim_{t\to0}\|t^{-1}(P_tf-f)-\mathcal Lf\|_\infty=0$. The set of such functions is denoted by $\mathcal D(\mathcal L)$, and is dense in $\mathscr C_0^0$; see for instance \cite{EK86}. The semigroup property of $(P_t)$ ensures the existence of a semiflow
\begin{equation}
\Phi(\nu, t):=\nu P_t,
\label{eq:DefPhit}\end{equation}
defined for any probability measure $\nu$ and $t\geq0$; namely, for all $s,t>0$, $\Phi(\nu,t+s)=\Phi(\Phi(\nu,t),s)$.

Let $(y_n)_{n\geq 0}$ be a (inhomogeneous) Markov chain and let $(\mathcal L_n)_{n\geq 0}$ be a sequence of operators satisfying, for $f\in\mathscr C^0_b$,
\[\mathcal L_nf(y_n):=\frac{\E\left[f(y_{n+1})-f(y_n)|y_n\right]}{\gamma_{n+1}},\]
where $(\gamma_n)_{n\geq1}$ is a decreasing sequence converging to 0, such that $\sum_{n=1}^\infty\gamma_n=+\infty$. Note that the sequence $(\mathcal L_n)$ exists thanks to Doob's lemma. Let $(\tau_n)$ be the sequence defined by $\tau_0:=0$ and $\tau_n:=\sum_{k=1}^n \gamma_k$, and let $m(t):=\sup\{n \geq 0 \ : \ t \geq \tau_n\}$ be the unique integer such that $\tau_{m(t)}\leq t<\tau_{m(t)+1}$. We denote by $(Y_t)$ the process defined by $Y_t:= y_n$ when $t\in [\tau_n, \tau_{n+1})$ and we set
\begin{equation}
\mu_t:=\mathscr{L}(Y_t).
\label{eq:DefMut}\end{equation}

Following \cite{BH96,Ben99}, we say that $(\mu_t)_{t\geq0}$ is an asymptotic pseudotrajectory of $\Phi$ (with respect to a distance $d$ over probability distributions) if, for any $T>0$,
\begin{equation}
\lim_{t \to \infty} \sup_{0 \leq s \leq T} d(\mu_{t+s},\Phi(\mu_t, s))=0.
\label{eq:DefAPT}
\end{equation}
Likewise, we say that $(\mu_t)_{t\geq0}$ is a $\lambda$-pseudotrajectory of $\Phi$ (with respect to $d$) if there exists $\lambda>0$ such that, for all $T>0$,
\begin{equation}
\limsup_{t\to+\infty}\frac1t\log\left(\sup_{0\leq s\leq T}d(\mu_{t+s},\Phi(\mu_t, s))\right)\leq-\lambda.
\label{eq:DefrPT}\end{equation}
This definition of $\lambda$-pseudotrajectories is the same as in \cite{Ben99}, up to the sign of $\lambda$.

In the sequel, we discuss asymptotic pseudotrajectories with distances of the form
\[d_{\mathscr F}(\mu,\nu):=\sup_{f \in \mathscr F}\left|\mu(f)-\nu(f)\right|=\sup_{f \in \mathscr F}\left|\int fd\mu-\int fd\nu\right|,\]
for a certain class of functions $\mathscr F$. In particular, this includes total variation, Fortet-Mourier and Wasserstein distances. In general, $d_{\mathscr F}$ is a pseudodistance. Nevertheless, it is a distance whenever $\mathscr F$ contains an algebra of bounded continuous functions that separates points (see \cite[Theorem~4.5.(a), Chapter~3]{EK86}). In all the cases considered here, $\mathscr F$ contains the algebra $\mathscr C^\infty_c$ and then convergence in $d_{\mathscr F}$ entails convergence in distribution (see Lemma~\ref{lemma:WeakConv}, whose proof is classical and is given in the appendix for the sake of completeness).

\subsection{Assumptions and main theorem}\label{subsection:assumptions}

In the sequel, let $d_1,N_1,N_2$ be multi-indices, parameters of the model. We will assume, without loss of generality, that $N_1\leq N_2$. Some key methods of how to check every assumption are provided in Section~\ref{section:illustrations}.

The first assumption we need is crucial. It defines the asymptotic homogeneous Markov process ruling the asymptotic behavior of $(y_n)$.

\begin{assumption}[Convergence of generators]
There exists a non-increasing sequence $(\epsilon_n)_{n \geq1}$ converging to 0 and a constant $M_1$ (depending on $\mathscr L(y_0)$) such that, for all $f \in \mathcal D(\mathcal L)\cap\mathscr C^{N_1}_b$ and $n\in\N^\star$, and for any $y\in\text{Supp}(\mathscr L(y_n))$
\[\left| \mathcal{L}f(y) - \mathcal{L}_n f(y) \right| \leq M_1\chi_{d_1}(y) \sum_{j=0}^{N_1} \| f^{(j)} \|_\infty\mathcal \epsilon_n.\]
\label{assumption:convergence}\end{assumption}

The following assumption is quite technical, but turns out to be true for most of the limit semigroups we deal with. Indeed, this is shown for large classes of PDMPs in Proposition~\ref{proposition:regPDMP} and for some diffusion processes in Lemma~\ref{lemma:DerivativesDiffusion}.

\begin{assumption}[Regularity of the limit semigroup]
For all $T>0$, there exists a constant $C_T$ such that, for every $t\leq T,|j|\leq N_1$ and $f \in \mathscr C^{N_2}_b$,
\[P_tf\in\mathscr C_b^{N_1},\quad |(P_tf)^{(j)}(y)| \leq C_T \sum_{i=0}^{N_2}  \| f^{(i)} \|_\infty.\]
\label{assumption:regularity}\end{assumption}

The next assumption is a standard condition of uniform boundedness of the moments of the Markov chain. We also provide a very similar Lyapunov criterion to check this condition.

\begin{assumption}[Uniform boundedness of moments]
\label{assumption:moments}
Assume that there exists a multi-index $d\geq d_1$ such that one of the following statements holds:
\begin{enumerate}[i)]
	\item There exists a constant $M_2$ (depending on $\mathscr L(y_0)$) such that
\[\sup_{n\geq 0} \E[\chi_d(y_n)] \leq M_2.\]
	\item There exists $V:\R^D\to\R_+$ such that, for all $n\geq0$, $\E[V(y_n)]<+\infty$. Moreover, there exist $n_0\in\N^\star,a,\alpha,\beta>0$, such that $V(y)\geq \chi_d(y)$ when $|y|>a$, such that, for $n\geq n_0$, and for any $y\in\text{Supp}(\mathscr L(y_n))$
\[\mathcal L_nV(y)\leq-\alpha V(y)+\beta.\]
\end{enumerate}
\end{assumption}

In this assumption, the function $V$ is a so-called Lyapunov function. The multi-index $d$ can be thought of as $d=d_1$ (which is sufficient for Theorem~\ref{theorem:APTrPT} to hold). However, in the setting of Assumption~\ref{assumption:variance}, it might be necessary to consider $d>d_1$. Of course, if Assumption~\ref{assumption:moments} holds for $d'>d$, then it holds for $d$. Note that we usually can take $V(y)=\e^{\theta y}$, so that we can choose every component of $d$ as large as needed.

\begin{remark}[\textit{ii)} $\Rightarrow$ \textit{i)}]
Computing $\E[\chi_d(y_n)]$ to check Assumption~\ref{assumption:moments}.i) can be involved, so we rather check a Lyapunov criterion. It is classic that \textit{ii)} entails \textit{i)}. Indeed, denoting by $n_1:=n_0\vee\min\{n\in\N^\star:\gamma_n<\alpha^{-1}\}$ and $v_n:=\E[V(y_n)]$, it is clear that
\[v_{n+1}\leq v_n+\gamma_{n+1}(\beta-\alpha v_n).\]
From this inequality, it is easy to deduce that, for $n\geq n_1$, $v_{n+1}\leq\beta\alpha^{-1}\vee v_n$ and then by induction $v_n\leq\beta\alpha^{-1}\vee v_{n_1}$, which entails \textit{i)}. Then,
\begin{align*}
\E[\chi_d(y_n)]&=\prob(|y_n|\leq a)\E[\chi_d(y_n)||y_n|\leq a]+ \prob(|y_n|>a)\E[\chi_d(y_n)||y_n|>a]\\
&\leq\chi_d(a)+\frac\beta\alpha\vee \left(\sup_{k\leq n_1} v_k\right).
\end{align*}
\end{remark}

Note that, with a classical approach, Assumption~\ref{assumption:moments} would provide tightness and Assumption~\ref{assumption:convergence} would be used to identify the limit.

The previous three assumptions are crucial to provide a result on asymptotic pseudotrajectories (Theorem~\ref{theorem:APTrPT}), but are not enough to quantify speeds of convergence. As it can be observed in the proof of Theorem~\ref{theorem:APTrPT}, such speed relies deeply on the asymptotic behavior of $\gamma_{m(t)}$ and $\epsilon_{m(t)}$. To this end, we follow the guidelines of \cite{Ben99} to provide a condition in order to ensure such an exponential decay. For any non-increasing sequences $(\gamma_n),(\epsilon_n)$ converging to 0, define
\[\lambda(\gamma,\epsilon)=-\limsup_{n\to\infty}\frac{\log(\gamma_n\vee\epsilon_n)}{\sum_{k=1}^n\gamma_k},\]
where $\gamma$ and $\epsilon$ respectively stand for the sequences $(\gamma_n)_{n\geq 0}$ and $(\epsilon_n)_{n\geq 0}$.

\begin{remark}[Computation of $\lambda(\gamma,\epsilon)$]
With the notation of \cite[Proposition~8.3]{Ben99}, we have $\lambda(\gamma,\gamma)=-l(\gamma)$. It is easy to check that, if $\epsilon_n\leq\gamma_n$ for $n$ large, $\lambda(\gamma,\epsilon)=\lambda(\gamma,\gamma)$ and, if $\epsilon_n=\gamma_n^\beta$ with $\beta\leq1$, $\lambda(\gamma,\epsilon)=\beta\lambda(\gamma,\gamma)$. We can mimic \cite[Remark~8.4]{Ben99} to provide sufficient conditions for $\lambda(\gamma,\epsilon)$ to be positive. Indeed, if $\gamma_n=f(n),\epsilon_n=g(n)$ with $f,g$ two positive functions decreasing toward 0 such that $\int_1^{+\infty}f(s)ds=+\infty$, then
\[\lambda(\gamma,\epsilon)=-\limsup_{x\to\infty}\frac{\log\left(f(x)\vee g(x)\right)}{\int_1^xf(s)ds}.\]
Typically, if
\[\gamma_n\sim\frac{A}{n^a\log(n)^b},\quad\epsilon_n\sim\frac{B}{n^c\log(n)^d}\]
for $A,B,a,b,c,d\geq0$, then
\begin{itemize}
	\item $\lambda(\gamma,\epsilon)=0$ for $a<1$.
	\item $\lambda(\gamma,\epsilon)=(c\wedge1)A^{-1}$ for $a=1$ and $b=0$.
	\item $\lambda(\gamma,\epsilon)=+\infty$ for $a=1$ and $0<b\leq1$.
\end{itemize}
\label{remark:LambdaGammaEpsilon}
\end{remark}

Now, let us provide the main results of this paper.

\begin{theorem}[Asymptotic pseudotrajectories]
Let $(y_n)_{n\geq0}$ be an inhomogeneous Markov chain and let $\Phi$ and $\mu$ be defined as in \eqref{eq:DefPhit} and \eqref{eq:DefMut}. If Assumptions~\ref{assumption:convergence}, \ref{assumption:regularity}, \ref{assumption:moments} hold, then $(\mu_t)_{t\geq0}$ is an asymptotic pseudotrajectory of $\Phi$ with respect to $d_{\mathscr F}$, where
\[\mathscr F=\left\{f\in\mathcal D(\mathcal L)\cap\mathscr C^{N_2}_b:\mathcal Lf\in\mathcal D(\mathcal L),\|\mathcal Lf\|_\infty+\|\mathcal L\mathcal Lf\|_\infty+\sum_{j=0}^{N_2}\|  f^{(j)} \|_\infty\leq1\right\}.\]
Moreover, if $\lambda(\gamma,\epsilon)>0$, then $(\mu_t)_{t\geq0}$ is a $\lambda(\gamma,\epsilon)$-pseudotrajectory of $\Phi$ with respect to $d_{\mathscr F}$.
\label{theorem:APTrPT}\end{theorem}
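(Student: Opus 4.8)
The plan is to estimate $d_{\mathscr F}(\mu_{t+s}, \Phi(\mu_t,s))$ directly by a telescoping/Lindeberg argument at the level of measures, following the analogy with Lindeberg's proof of the CLT mentioned in the introduction. Fix $T>0$, fix $f\in\mathscr F$, and fix a large $t$. Writing $n=m(t)$ and $p=m(t+s)$, we have $\mu_{t+s}=\mathscr L(y_p)$ and $\Phi(\mu_t,s)=\mu_n P_s$. The difference $\mu_{t+s}(f)-\Phi(\mu_t,s)(f)=\E[f(y_p)]-\E[(P_sf)(y_n)]$ should be expanded as a telescoping sum over the indices $k=n,\dots,p-1$, inserting at step $k$ the semigroup $P_{\tau_{k+1}\wedge(t+s)-\tau_k}$ (or more precisely comparing $\E[(P_{t+s-\tau_{k+1}}f)(y_{k+1})]$ with $\E[(P_{t+s-\tau_k}f)(y_k)]$). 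Each summand then splits into two pieces: a "chain step" $\E[g(y_{k+1})-g(y_k)]$ with $g=P_{t+s-\tau_{k+1}}f$, which by definition of $\mathcal L_k$ equals $\gamma_{k+1}\,\E[\mathcal L_k g(y_k)]$, and a "flow step" $\E[(P_{t+s-\tau_k}f)(y_k)-(P_{t+s-\tau_{k+1}}f)(y_k)]$, which is $-\int_{\tau_k}^{\tau_{k+1}} \E[(\mathcal L P_{t+s-u}f)(y_k)]\,du \approx -\gamma_{k+1}\,\E[(\mathcal L g)(y_k)]$ up to a second-order-in-$\gamma$ remainder controlled by $\|\mathcal L\mathcal Lf\|_\infty$ (here the condition $\mathcal Lf\in\mathcal D(\mathcal L)$ and the bounds on $\|\mathcal Lf\|_\infty,\|\mathcal L\mathcal Lf\|_\infty$ in the definition of $\mathscr F$ enter). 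Combining, the $k$-th summand is $\gamma_{k+1}\,\E[(\mathcal L_k g - \mathcal L g)(y_k)]$ plus an $O(\gamma_{k+1}^2)$ term.

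Now I invoke the three assumptions. Assumption~\ref{assumption:regularity} guarantees that $g=P_{t+s-\tau_{k+1}}f$ lies in $\mathscr C_b^{N_1}$ with $\sum_{j=0}^{N_1}\|g^{(j)}\|_\infty \le C'_T\sum_{i=0}^{N_2}\|f^{(i)}\|_\infty \le C'_T$ uniformly for the relevant time arguments (which all lie in $[0,T+\gamma_1]$), so $g$ is an admissible test function for Assumption~\ref{assumption:convergence}. That assumption then bounds $|\mathcal L_k g(y) - \mathcal L g(y)| \le M_1\chi_{d_1}(y)\,C'_T\,\epsilon_k$ for $y\in\mathrm{Supp}(\mathscr L(y_k))$, so $|\E[(\mathcal L_k g-\mathcal L g)(y_k)]| \le M_1 C'_T\,\epsilon_k\,\E[\chi_{d_1}(y_k)]$. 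Assumption~\ref{assumption:moments} (via part i), or part ii) through the Remark) makes $\sup_k \E[\chi_{d_1}(y_k)] \le \sup_k \E[\chi_d(y_k)] \le M_2 < \infty$; the same moment bound handles the $O(\gamma_{k+1}^2)$ remainder, since that term is of the form $\gamma_{k+1}^2$ times an expectation of $|\mathcal L\mathcal L f|$ composed with $y_k$ together with derivative terms, all bounded. Summing over $k$ from $n=m(t)$ to $p-1=m(t+s)-1$, and using that $(\gamma_k)$ and $(\epsilon_k)$ are non-increasing so that on this range $\gamma_k\le\gamma_{m(t)}$ and $\epsilon_k\le\epsilon_{m(t)}$, while $\sum_{k=n}^{p-1}\gamma_{k+1}=\tau_p-\tau_n \le T+\gamma_{m(t)+1}$, we obtain a bound of the shape
\[
\sup_{f\in\mathscr F}\bigl|\mu_{t+s}(f)-\Phi(\mu_t,s)(f)\bigr| \le K_T\,(T+1)\,\bigl(\epsilon_{m(t)} \vee \gamma_{m(t)}\bigr)
\]
for all $0\le s\le T$ and $t$ large, where $K_T$ depends only on $M_1,M_2,C_T$. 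Here one must also be slightly careful near the endpoint $s$: the last interval $[\tau_p, t+s)$ is not a full $\gamma$-step, but it is handled the same way with $\gamma_{m(t+s)+1}$ replaced by the partial length, which only improves the estimate.

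Since $m(t)\to\infty$ as $t\to\infty$ and $\epsilon_n,\gamma_n\to0$, the right-hand side tends to $0$ uniformly in $s\in[0,T]$, which is exactly the asymptotic-pseudotrajectory property \eqref{eq:DefAPT}. For the quantitative statement, take $\log$ of the bound, divide by $t$, and let $t\to\infty$: since $\tau_{m(t)}\le t$, we get $\frac1t\log\bigl(\sup_{s\le T} d_{\mathscr F}(\mu_{t+s},\Phi(\mu_t,s))\bigr) \le \frac1t\log(K_T(T+1)) + \frac{\log(\gamma_{m(t)}\vee\epsilon_{m(t)})}{\tau_{m(t)}}\cdot\frac{\tau_{m(t)}}{t}$, and because $\tau_{m(t)}/t\to1$ (as $\tau_{m(t)}\le t<\tau_{m(t)+1}$ and $\gamma_n\to0$) the $\limsup$ is at most $-\lambda(\gamma,\epsilon)$ by definition of $\lambda(\gamma,\epsilon)$, giving the $\lambda(\gamma,\epsilon)$-pseudotrajectory property \eqref{eq:DefrPT}. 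The main obstacle I anticipate is the bookkeeping in the telescoping sum — correctly pairing the shifted semigroups with the chain increments, controlling the second-order Taylor remainder of $u\mapsto P_{t+s-u}f$ uniformly (this is where $\mathcal L\mathcal Lf$ and Assumption~\ref{assumption:regularity} must be combined carefully), and treating the two fractional time-steps at the ends of the interval; the measure-theoretic and moment estimates themselves are routine once the decomposition is set up.
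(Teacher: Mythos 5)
Your proposal is correct and follows essentially the same route as the paper's proof: the same Lindeberg-type telescoping sum interpolating chain steps $\gamma_{k+1}\mathcal L_k$ against the semigroup flow, the same second-order control of $P_hg-g-h\mathcal Lg$ via $\|\mathcal L\mathcal Lf\|_\infty$ (the paper's Lemma~\ref{lemma:F1}), the same use of Assumptions~\ref{assumption:regularity}, \ref{assumption:convergence} and \ref{assumption:moments} to bound each summand by $M_1M_2C_T\epsilon_k\gamma_{k+1}$ plus $O(\gamma_{k+1}^2)$, and the same $\,(T+1)C'_T(\gamma_{m(t)}\vee\epsilon_{m(t)})$ bound yielding both the pseudotrajectory and the $\lambda(\gamma,\epsilon)$ rate. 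The only cosmetic difference is that you anchor the semigroup times at $t+s$ (giving two small fractional boundary terms) whereas the paper anchors them at $\tau_{m(t+s)}$ (one boundary term bounded by $\tfrac32\gamma_{m(t)+1}$); this changes nothing of substance.
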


\subsection{Consequences}
\label{subsection:consequences}

Theorem~\ref{theorem:APTrPT} relates the asymptotic behavior of the Markov chain $(y_n)$ to the one of the Markov process generated by $\mathcal L$. However, to deduce convergence or speeds of convergence of the Markov chain, we need another assumption: 

\begin{assumption}[Ergodicity]	
Assume that there exist a probability distribution $\pi$, constants $v,M_3>0$ ($M_3$ depending on $\mathscr L(y_0)$), and a class of functions $\mathscr G$ such that one of the following conditions holds:
\begin{enumerate}[i)]
	\item $\mathscr G\subseteq\mathscr F$ and, for any probability measure $\nu$, for all $t>0$,
	\[d_{\mathscr G}(\Phi(\nu,t),\pi)\leq d_{\mathscr G}(\nu,\pi)M_3\e^{-vt}.\]
	\item There exists $r,M_4>0$ such that, for all $s,t>0$
	\[d_{\mathscr G}(\Phi(\mu_s,t),\pi)\leq M_3\e^{-vt},\]
	and, for all $T>0$, with $C_T$ defined in Assumption~\ref{assumption:regularity},
	\[TC_T\leq M_4\e^{rT}.\]
	\item There exist functions $\psi: \mathbb{R}_+ \to \mathbb{R}_+ $ and $W\in\mathscr C^0$ such that
	\[	\lim_{t \to \infty} \psi(t) = 0, \quad \lim_{\|x\|\to \infty} W(x) = + \infty, \quad \sup_{n\geq0} \E[W(y_n)] < \infty,\]
	and, for any probability measure $\nu$, for all $t\geq 0$,
	\[d_{\mathscr G}(\Phi(\nu,t),\pi)\leq \nu(W) \psi(t).\]
	
\end{enumerate}
\label{assumption:ergodicity}
\end{assumption}

Since standard proofs of geometric ergodicity rely on the use of Grönwall's Lemma, Assumption~\ref{assumption:ergodicity}.i) and ii) are quite classic. In particular, using Foster-Lyapunov methods entails such inequalities (see e.g. \cite{MT93b,HM11}). However, in a weaker setting (sub-geometric ergodicity for instance) Assumption~\ref{assumption:ergodicity}.iii) might still hold; see for example \cite[Theorem~3.6]{JR02}, \cite[Theorem~3.2]{DFG09} or \cite[Theorem~4.1]{Hai10}. Note that, if $W=\chi_d$, then $\sup_{n\geq0} \E[W(y_n)] < \infty$ automatically from Assumption~\ref{assumption:moments}. Note that, in classical settings where $TC_T\leq M_4\e^{rT}$, we have \emph{i)$\Rightarrow$ ii)$\Rightarrow$ iii)}.

\begin{theorem}[Speed of convergence toward equilibrium]
Assume that Assumptions~\ref{assumption:convergence}, \ref{assumption:regularity}, \ref{assumption:moments} hold and let $\mathscr F$ be as in Theorem~\ref{theorem:APTrPT}.
\begin{enumerate}[i)]
	\item If Assumption~\ref{assumption:ergodicity}.i) holds and $\lambda(\gamma,\epsilon)>0$ then, for any $u<\lambda(\gamma,\epsilon)\wedge v$, there exists a constant $M_5$ such that, for all $t>t_0:=(v-u)^{-1}\log(1\wedge M_3)$,
\[d_{\mathscr{G}}\left(\mu_t,\pi\right)\leq\left(M_5+d_{\mathscr{G}}\left(\mu_0,\pi\right)\right)\e^{-ut}.\]
	\item If Assumption~\ref{assumption:ergodicity}.ii) holds and $\lambda(\gamma,\epsilon)>0$ then, for any $u<v\lambda(\gamma,\epsilon)(r+v+\lambda(\gamma,\epsilon))^{-1}$, there exists a constant $M_5$ such that, for all $t>0$,
\[d_{\mathscr F\cap\mathscr{G}}\left(\mu_t,\pi\right)\leq M_5\e^{-ut}.\]
	\item If Assumption~\ref{assumption:ergodicity}.iii) holds and convergence in $d_{\mathscr G}$ implies weak convergence, then $\mu_t$ converges weakly toward $\pi$ when $t\to\infty$.
\end{enumerate}
\label{theorem:SpeedConv}\end{theorem}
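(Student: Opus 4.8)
The plan is to combine the asymptotic (or $\lambda$-)pseudotrajectory property from Theorem~\ref{theorem:APTrPT} with the ergodicity of the limit flow $\Phi$ supplied by Assumption~\ref{assumption:ergodicity}, using a telescoping decomposition over windows of fixed length $T$. Concretely, fix $T>0$ and write, for a test function $f$ in the relevant class and $t$ large, the difference $\mu_{t}(f)-\pi(f)$ by inserting the exact flow: going back from time $t$ to time $t-T$ one replaces $\mu_t$ by $\Phi(\mu_{t-T},T)$ at the cost of $\sup_{0\le s\le T}d_{\mathscr F}(\mu_{t-T+s},\Phi(\mu_{t-T},s))$, which by Theorem~\ref{theorem:APTrPT} is $o(1)$ (resp. decays like $\e^{-\lambda(\gamma,\epsilon)t}$ up to the window error). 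Then the flow contracts toward $\pi$ by a factor $M_3\e^{-vT}$ per step under Assumption~\ref{assumption:ergodicity}.i). Iterating this over $\lfloor (t-t_0)/T\rfloor$ windows yields a geometric series: the error is bounded by $d_{\mathscr G}(\mu_0,\pi)(M_3\e^{-vT})^{\#\text{steps}}$ plus a sum $\sum_k (M_3\e^{-vT})^{k}\,(\text{pseudotrajectory error at time }t-kT)$.

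For part~i) the key quantitative point is that the pseudotrajectory error at time $s$ is (for any $u'<\lambda(\gamma,\epsilon)$) eventually bounded by a constant times $\e^{-u's}$, so the geometric series $\sum_{k\ge0}(M_3\e^{-vT})^k \e^{-u'(t-kT)}=\e^{-u't}\sum_k (M_3\e^{(u'-v)T})^k$ converges as soon as $u'<v$ (choosing $T$ large enough that $M_3\e^{(u'-v)T}<1$), giving a bound of the form $C\e^{-u't}$; optimizing $u'\uparrow\lambda(\gamma,\epsilon)\wedge v$ and absorbing the initial term $d_{\mathscr G}(\mu_0,\pi)(M_3\e^{-vT})^{\#\text{steps}}\le d_{\mathscr G}(\mu_0,\pi)\e^{-ut}$ for $t>t_0$ produces the stated estimate $d_{\mathscr G}(\mu_t,\pi)\le (M_5+d_{\mathscr G}(\mu_0,\pi))\e^{-ut}$. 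Here one uses $\mathscr G\subseteq\mathscr F$ so that $d_{\mathscr G}\le d_{\mathscr F}$ is controlled by the pseudotrajectory estimate, and the contraction hypothesis is applied with $\nu=\mu_{t-kT}$.

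For part~ii) the flow is only known to contract when started from a measure of the form $\mu_s$, i.e. $d_{\mathscr G}(\Phi(\mu_s,t),\pi)\le M_3\e^{-vt}$, and one no longer has a one-step contraction usable at an arbitrary measure; instead one uses a single split at a well-chosen intermediate time $s=s(t)$: $d_{\mathscr{F}\cap\mathscr G}(\mu_t,\pi)\le d_{\mathscr F}(\mu_t,\Phi(\mu_{t-s},s))+d_{\mathscr G}(\Phi(\mu_{t-s},s),\pi)$. The second term is $\le M_3\e^{-vs}$. For the first term one must pay for propagating the pseudotrajectory error through the flow over a time interval of length $s$, and this is where the regularity estimate $TC_T\le M_4\e^{rT}$ enters: applying Assumption~\ref{assumption:regularity} to push $f$ through $P_s$ inflates the $\mathscr C^{N_2}_b$-norms by $C_s$, so the pseudotrajectory error at the base point, which is $O(\e^{-\lambda(\gamma,\epsilon)(t-s)})$, gets multiplied by roughly $\e^{rs}$. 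Balancing $\e^{-vs}$ against $\e^{rs}\e^{-\lambda(\gamma,\epsilon)(t-s)}$ by choosing $s$ proportional to $t$ gives the optimal rate $u=v\lambda(\gamma,\epsilon)/(r+v+\lambda(\gamma,\epsilon))$, and hence $d_{\mathscr F\cap\mathscr G}(\mu_t,\pi)\le M_5\e^{-ut}$ for all $t>0$ after adjusting the constant.

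Part~iii) is the soft statement: from Theorem~\ref{theorem:APTrPT} the family $(\mu_t)$ is an asymptotic pseudotrajectory of $\Phi$, so $\sup_{0\le s\le T}d_{\mathscr F}(\mu_{t+s},\Phi(\mu_t,s))\to0$; combined with $d_{\mathscr G}(\Phi(\nu,t),\pi)\le\nu(W)\psi(t)$ and the uniform bound $\sup_n\E[W(y_n)]=\sup_t\mu_t(W)<\infty$, one writes $d_{\mathscr G}(\mu_{t},\pi)\le d_{\mathscr G}(\mu_t,\Phi(\mu_{t-T},T))+\mu_{t-T}(W)\psi(T)$; taking $t\to\infty$ then $T\to\infty$ kills both terms (the first because $d_{\mathscr G}\le d_{\mathscr F}$ on $\mathscr G\subseteq\mathscr F$ and the APT property, the second because $\psi(T)\to0$ with $\mu_{t-T}(W)$ bounded), so $d_{\mathscr G}(\mu_t,\pi)\to0$, which gives weak convergence under the stated hypothesis. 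The main obstacle is the careful bookkeeping in part~ii): one must track how the constants from Assumption~\ref{assumption:regularity} compound when iterating or propagating the Taylor-expansion error through the semigroup over a time window that grows with $t$, and verify that the class $\mathscr F$ is stable enough (closed under $P_s$ up to controlled norm inflation) for the pseudotrajectory bound of Theorem~\ref{theorem:APTrPT} to remain applicable to the transported test functions; getting the exact exponent $v\lambda(\gamma,\epsilon)(r+v+\lambda(\gamma,\epsilon))^{-1}$ requires optimizing the split time precisely rather than just qualitatively.
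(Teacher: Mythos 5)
Parts \emph{i)} and \emph{ii)} of your proposal are essentially the paper's own argument. For \emph{i)} the paper iterates exactly your one-window estimate $d_{\mathscr G}(\mu_{(n+1)T},\pi)\leq \e^{\lambda(\gamma,\epsilon)}C'_T\e^{-unT}+M_3\e^{-vT}d_{\mathscr G}(\mu_{nT},\pi)$ with $T$ of order $(v-u)^{-1}\log M_3$, using $\mathscr G\subseteq\mathscr F$ to control the pseudotrajectory error in $d_{\mathscr G}$; the only cosmetic difference is that you keep $u'$ strictly below $\lambda(\gamma,\epsilon)\wedge v$ so the series is geometric, while the paper lets the two rates coincide and absorbs the resulting linear factor into $M_5$. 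For \emph{ii)} the paper makes the same single split at an intermediate time proportional to $t$, bounds the pseudotrajectory term by $C'_{(1-\alpha)t}(\gamma_{m(\alpha t)}\vee\epsilon_{m(\alpha t)})\leq M_4\e^{r(1-\alpha)t}\e^{\lambda(\gamma,\epsilon)}\e^{-(\lambda(\gamma,\epsilon)-\varepsilon)\alpha t}$ using $TC_T\leq M_4\e^{rT}$, and optimizes $\alpha$, which is precisely your balancing and gives the exponent $v\lambda(\gamma,\epsilon)(r+v+\lambda(\gamma,\epsilon))^{-1}$.

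Part \emph{iii)}, however, has a genuine gap: you control the first term $d_{\mathscr G}(\mu_t,\Phi(\mu_{t-T},T))$ by the $d_{\mathscr F}$-pseudotrajectory error ``because $\mathscr G\subseteq\mathscr F$'', but the inclusion $\mathscr G\subseteq\mathscr F$ is only part of Assumption~\ref{assumption:ergodicity}.i); it is \emph{not} assumed in case iii). Indeed the whole point of case iii) (see Remark~\ref{remark:slowmixing}) is to treat classes $\mathscr G$, typically $W$-weighted and attached to subgeometrically ergodic limits, which have no reason to sit inside the very restrictive class $\mathscr F$ of Theorem~\ref{theorem:APTrPT}, so $d_{\mathscr G}\leq d_{\mathscr F}$ is unavailable and your first term is not controlled by the APT property. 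The paper avoids this comparison altogether: it shows that $\{\mu_t\}$ lies in the set $\{\nu:\nu(W)\leq M\}$, which is relatively compact by Markov's inequality and Prokhorov's theorem, observes that $\pi$ attracts this set uniformly since $d_{\mathscr G}(\Phi(\nu,t),\pi)\leq\nu(W)\psi(t)\leq M\psi(t)$, and then invokes \cite[Theorem~6.10]{Ben99} for asymptotic pseudotrajectories. Your decomposition can be repaired without $\mathscr G\subseteq\mathscr F$ by working at the level of weak convergence instead of $d_{\mathscr G}$: choose $T=T(t)\to\infty$ slowly enough (diagonal extraction) that $\sup_{s\leq T(t)}d_{\mathscr F}\left(\mu_{t-T(t)+s},\Phi(\mu_{t-T(t)},s)\right)\to0$; then $\nu_t:=\Phi(\mu_{t-T(t)},T(t))$ satisfies $d_{\mathscr G}(\nu_t,\pi)\leq M\psi(T(t))\to0$, hence $\nu_t\to\pi$ weakly by the stated hypothesis, while $d_{\mathscr F}(\mu_t,\nu_t)\to0$ yields $\mu_t(f)-\nu_t(f)\to0$ for every $f\in\mathscr C^0_b$ by the argument of Lemma~\ref{lemma:WeakConv}. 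As written, though, the step resting on $\mathscr G\subseteq\mathscr F$ does not follow from the hypotheses of case iii).
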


The first part of this theorem is similar to \cite[Lemma~8.7]{Ben99} but provides sharp bounds for the constants. In particular, $M_5$ and $t_0$ do not depend on $\mu_0$ (in Theorem~\ref{theorem:SpeedConv}.\emph{i)} only), see the proof for an explicit expression of $M_5$). The second part, however, does not require $\mathscr G$ to be a subset of $\mathscr F$, which can be rather involved to check, given the expression of $\mathscr F$ given in Theorem~\ref{theorem:APTrPT}. The third part is a direct consequence of \cite[Theorem~6.10]{Ben99}; we did not meet this case in our main examples, but we discuss the convergence toward sub-geometrically ergodic limit processes in Remark~\ref{remark:slowmixing}.

\begin{remark}[Rate of convergence in the initial scale]
Theorem~\ref{theorem:SpeedConv}.i) and ii) provide a bound of the form
$$
d_\mathscr{H} (\mathcal{L}(Y_t), \pi) \leq C e^{-u t},
$$
for some $\mathscr{H},C,u$ and all $t\geq0$. This easily entails, for another constant $C$ and all $n\geq 0$,
$$
d_\mathscr{H} (\mathcal{L}(y_n), \pi) \leq C e^{-u \tau_n}.
$$
Let us detail this bound for three examples where $\epsilon \leq \gamma$:\begin{itemize}
	\item if $\gamma_n= A n^{-1/2}$, then $d_\mathscr{H} (\mathcal{L}(y_n), \pi) \leq C e^{-2 A u\sqrt{n}}$.
	\item if $\gamma_n= A n^{-1}$, then $d_\mathscr{H} (\mathcal{L}(y_n), \pi) \leq C n^{-Au}$.
	\item if $\gamma_n= A(n\log(n))^{-1}$, then $d_\mathscr{H} (\mathcal{L}(y_n), \pi) \leq C \log(n)^{-Au}$.
\end{itemize}
In a nutshell, if $\gamma_n$ is large, the speed of convergence is good but $\lambda(\gamma,\gamma)$ is small.  In particular, even if $\gamma_n=n^{-1/2}$ provides the better speed, Theorem~\ref{theorem:SpeedConv} does not apply. Remark that the parameter $u$ is more important at the discrete time scale than it is at the continuous time scale.
\end{remark}

\begin{remark}[Convergence of unbounded functionals]
Theorem~\ref{theorem:SpeedConv} provides convergence in distribution of $(\mu_t)$ toward $\pi$, i.e. for every $f\in\mathscr C_b^0(\R^D)$,
\[\lim_{t\to\infty}\mu_t(f)=\pi(f).\]
Nonetheless, Assumption~\ref{assumption:moments} enables us to extend this convergence to unbounded functionals $f$. Recall that, if a sequence $(X_n)_{n\geq 0}$ converges weakly to $X$ and
\[M:=\E[V(X)]+\sup_{n\geq 0}\E[V(X_n)]<+\infty\]
for some positive function $V$, then $\E[f(X_n)]$ converges to $\E[f(X)]$ for every function $|f|<V^\theta$, with $\theta<1$. Indeed, let $(\kappa_m)_{m\geq0}$ be a sequence of $\mathscr C^\infty_c$ functions such that $\forall x\in\R^D,\lim_{m\to\infty}\kappa_m(x)=1$ and $0\leq\kappa_m\leq1$. We have, for $m\in\N$,
\begin{align*}
|\E\left[f(X_n)-f(X)\right]|&\leq|\E\left[(1-\kappa_m(X_n))f(X_n)\right]|+|\E\left[(1-\kappa_m(X))f(X)\right]|\\
&\quad+|\E\left[f(X_n)\kappa_m(X_n)-f(X)\kappa_m(X)\right]|\\
&\leq \E[|f(X_n)|^{\frac1\theta}]^\theta\E[(1-\kappa_m(X_n))^{\frac1{1-\theta}}]^{1-\theta}\\
&\quad+\E[|f(X)|^{\frac1\theta}]^\theta\E[(1-\kappa_m(X))^{\frac1{1-\theta}}]^{1-\theta}\\
&\quad+|\E\left[f(X_n)\kappa_m(X_n)-f(X)\kappa_m(X)\right]|\\
&\leq M^\theta\E[(1-\kappa_m(X_n))^{\frac1{1-\theta}}]^{1-\theta}+M^\theta\E[(1-\kappa_m(X))^{\frac1{1-\theta}}]^{1-\theta}\\
&\quad+|\E\left[f(X_n)\kappa_m(X_n)-f(X)\kappa_m(X)\right]|,
\end{align*}
so that, for all $m\in\N$,
\[\limsup_{n\to\infty}\E\left[f(X_n)-f(X)\right]\leq2M^\theta\E[(1-\kappa_m(X))^{\frac1{1-\theta}}]^{1-\theta}.\]
Using the dominated convergence theorem, $\lim_{n\to\infty}\E\left[f(X_n)-f(X)\right]=0$ since the right-hand side converges to 0. Note that the condition $|f|\leq V^\theta$ can be slightly weakened using the generalized Hölder's inequality on Orlicz spaces (see e.g. \cite{CDLA12}). Although, note that $\E[V(X_n)]$ may not converge to $\E[V(X)].$
\end{remark}

The following assumption is purely technical but is easy to verify in all of our examples, and will be used to prove functional convergence.

\begin{assumption}[Control of the variance]
Define the following operator:
\[\Gamma_nf=\mathcal L_nf^2-\gamma_{n+1}(\mathcal L_nf)^2-2f\mathcal L_nf.\]
Assume that there exists a multi-index $d_2$ and $M_6>0$ such that, if $\varphi_i$ is the projection on the $i^\text{th}$ coordinate,
\[\mathcal{L}_n \varphi_i(y) \leq M_6 \chi_{d_2} (y), \quad \Gamma_n \varphi_i(y) \leq M_6 \chi_{d_2} (y),\]
and
\[\mathcal{L}_n \chi_{d_2}(y) \leq M_6 \chi_{d_2} (y), \quad \Gamma_n \chi_{d_2}(y) \leq M_6 \chi_{d} (y),\]
where $d$ is defined in Assumption~\ref{assumption:moments}.
\label{assumption:variance}
\end{assumption}

\begin{theorem}[Functional convergence]
Assume that Assumptions~\ref{assumption:convergence}, \ref{assumption:regularity}, \ref{assumption:moments}, \ref{assumption:ergodicity} hold and let $\pi$ be as in Assumption~\ref{assumption:ergodicity}. Let $Y^{(t)}_s:=Y_{t+s}$ and $X^\pi$ be the process generated by $\mathcal L$ such that $\mathscr L(X^\pi_0)=\pi$. Then, for any $m\in\N^\star$, let $0<s_1<\dots<s_m$,
\[(Y^{(t)}_{s_1},\dots,Y^{(t)}_{s_m})\overset{\mathscr L}{\longrightarrow}(X^\pi_{s_1},\dots,X^\pi_{s_m}).\]

Moreover, if Assumption~\ref{assumption:variance} holds, then the sequence of processes $(Y^{(t)}_s)_{s\geq0}$ converges in distribution, as $t\to+\infty$, toward $(X^\pi_s)_{s\geq0}$ in the Skorokhod space.
\label{theorem:functionalCV}
\end{theorem}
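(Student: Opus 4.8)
The plan is to deduce both statements from the asymptotic pseudotrajectory property (Theorem~\ref{theorem:APTrPT}) together with the ergodicity of the limit process (Assumption~\ref{assumption:ergodicity}). For the finite-dimensional convergence, first I would fix $0<s_1<\dots<s_m$ and a test function of product form $f(x_1,\dots,x_m)=\prod_{i=1}^m g_i(x_i)$ with each $g_i$ smooth and bounded; since such products generate the relevant class, it suffices to show $\E\bigl[\prod_i g_i(Y^{(t)}_{s_i})\bigr]\to\E\bigl[\prod_i g_i(X^\pi_{s_i})\bigr]$. The key idea is to exploit the Markov property of $(y_n)$ (hence of $(Y_s)$) to peel off the conditional expectations one time step at a time, writing the expectation as a nested application of (approximate) semigroup operators $P_{s_{i+1}-s_i}$, and to control at each stage the error between the true one-step-in-continuous-time evolution of $\mu_{t}$ and $\Phi(\mu_t,\cdot)$ using the APT bound on a finite horizon $T\geq s_m$. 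Here the regularity Assumption~\ref{assumption:regularity} is what guarantees that composing with $P_{s_{i+1}-s_i}$ keeps the test functions in a class controlled by $\mathscr C^{N_2}_b$-norms, so the APT estimate can be reapplied; meanwhile $\mathscr L(Y_t)=\mu_t\to\pi$ by Theorem~\ref{theorem:SpeedConv} (or directly by the APT property plus Assumption~\ref{assumption:ergodicity}), so the "initial condition" of the approximating chain of semigroups converges to $\pi$. Combining these, the nested expression converges to $\pi P_{s_1}\bigl(g_1\cdot P_{s_2-s_1}(g_2\cdots)\bigr)=\E[\prod_i g_i(X^\pi_{s_i})]$ by stationarity of $X^\pi$.

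For the second statement — convergence in the Skorokhod space $D([0,\infty),\R^D)$ — I would invoke the standard criterion that finite-dimensional convergence plus tightness of the laws of $(Y^{(t)}_s)_{s\geq0}$ in $D$ implies weak convergence. The finite-dimensional part is already in hand (noting the limit is a.s. continuous, so the points $s_i$ need not be restricted). For tightness, I would use Aldous's criterion: one must bound, uniformly in small $\delta$, the increments $\E\bigl[\,1\wedge|Y^{(t)}_{\sigma+\delta}-Y^{(t)}_{\sigma}|\,\bigr]$ over stopping times $\sigma\leq T$, together with a uniform control of $\sup_{s\leq T}|Y^{(t)}_s|$ in probability. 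The latter is immediate from Assumption~\ref{assumption:moments} (uniform moment bounds on $y_n$, hence on $Y_s$, give tightness of one-dimensional marginals). For the increment bound, this is precisely where Assumption~\ref{assumption:variance} enters: the operator $\Gamma_n$ is the discrete "carré du champ" associated to $\mathcal L_n$, so that for the coordinate maps $\varphi_i$ one gets, via a telescoping/martingale decomposition over the steps falling in $[\sigma+t,\sigma+t+\delta]$,
\[
\E\bigl[(Y^{(t)}_{\sigma+\delta}-Y^{(t)}_{\sigma})^2\bigr]\ \lesssim\ \Bigl(\sum_{k}\gamma_{k+1}\Bigr)^2 M_6^2\,\E[\chi_{d_2}(\cdot)^2]\ +\ \sum_k\gamma_{k+1}M_6\,\E[\chi_d(\cdot)],
\]
where the sums run over indices $k$ with $\tau_k\in[\sigma+t,\sigma+t+\delta)$, so $\sum_k\gamma_{k+1}$ is of order $\delta$ (up to the error $\gamma_{m(\cdot)}\to0$); the moment conditions in Assumptions~\ref{assumption:moments} and~\ref{assumption:variance} then make the right-hand side $O(\delta)$ uniformly in $t$ and in the stopping time $\sigma$, which is exactly Aldous's condition. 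The drift term contributes the $\sum\gamma_{k+1}\mathcal L_n\varphi_i$ piece, bounded the same way.

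The main obstacle I expect is the bookkeeping in the finite-dimensional step: controlling the accumulation of APT errors through the $m$ successive applications of the approximate semigroups while simultaneously keeping the intermediate test functions $g_i\cdot P_{s_{i+1}-s_i}(\cdots)$ inside a class to which the APT bound of Theorem~\ref{theorem:APTrPT} applies — this requires carefully threading Assumption~\ref{assumption:regularity} (to bound the derivatives of $P_tf$ by $\mathscr C^{N_2}_b$-norms of $f$) together with the requirement that $\mathcal Lf,\mathcal L\mathcal Lf$ be bounded, which is not stable under multiplication by $g_i$ in general and may force an approximation argument or a restriction to a subalgebra on which the iterated generator is controlled. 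A secondary technical point is handling the discrepancy between $t+s$ and the grid time $\tau_{m(t+s)}$, i.e. that $Y_{t+s}=y_{m(t+s)}$ and $m(t+s)$ need not equal $m(t)+m(s)$; this introduces $O(\gamma_{m(t)})\to0$ corrections that must be tracked but are harmless. Once the finite-dimensional statement is secured, tightness via Aldous is routine given Assumptions~\ref{assumption:moments} and~\ref{assumption:variance}.
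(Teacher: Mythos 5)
Your plan follows essentially the same route as the paper's proof: the finite-dimensional convergence is obtained by peeling off conditional expectations and exploiting the uniformity in the initial law of the pseudotrajectory estimate \eqref{eq:APTProof3} together with $\mu_t\to\pi$ from Theorem~\ref{theorem:SpeedConv}, while tightness is obtained from the Aldous--Rebolledo criterion applied to the decomposition $Y^{(t,i)}_s=A^{(t,i)}_s+M^{(t,i)}_s$ whose bracket is expressed through $\Gamma_n$ (Lemma~\ref{lemma:mart}). The only points to tighten are minor and are treated the same way in the paper: bounding the increments over a random stopping-time window uses the running-supremum moment bound $\E\left[\sup_{n\leq N}\chi_{d_2}(y_n)\right]\leq M_7$ from Lemma~\ref{lemma:mart} (fixed-time moment bounds alone do not suffice to handle the random interval), and the limit $X^\pi$ need not be a.s.\ continuous --- it is enough that, as a stationary Feller process, it has no fixed times of discontinuity, so deterministic times $s_1<\dots<s_m$ are a.s.\ continuity points.
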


For reminders about the Skorokhod space, the reader may consult \cite{JM86,Bil99,JS03}. Note that the operator $\Gamma_n$ we introduced in Assumption~\ref{assumption:variance} is very similar to the \emph{carré du champ} operator in the  continuous-time case, up to a term $\gamma_{n+1}(\mathcal L_nf)^2$ vanishing as $n\to+\infty$ (see e.g. \cite{Bak94,ABC+00,JS03}). Moreover, if we denote by $(K_n)$ the transition kernels of the Markov chain $(y_n)$, then it is clear that
\[\forall n\in\N,\quad \gamma_{n+1}\Gamma_nf=K_nf^2-(K_nf)^2.\]

\section{Illustrations}\label{section:illustrations}

\subsection{Weighted Random Walks}\label{subsection:randomwalk}
In this section, we apply Theorems~\ref{theorem:APTrPT}, \ref{theorem:SpeedConv} and \ref{theorem:functionalCV} to weighted random walks (WRWs) on $\R^D$. Let $(\omega_n)$ be a positive sequence, and $\gamma_n:=\omega_n(\sum_{k=1}^n\omega_k)^{-1}$. Then, set
\[x_n:=\frac{\sum_{k=1}^n\omega_kE_k}{\sum_{k=1}^n\omega_k},\quad x_{n+1}:=x_n+\gamma_{n+1}\left(E_{n+1}-x_n\right).\]
Here, $x_n$ is the weighted mean of $E_1,\dots,E_n$, where $(E_n)$ is a sequence of centered independent random variables. Under standard assumptions on the moments of $E_n$, the strong law of large numbers holds and $(x_n)$ converges to 0 a.s. Thus, it is natural to apply the general setting of Section~\ref{section:results} to $y_n:=x_n\gamma_n^{-1/2}$ and to define $\mu_t$ as in \eqref{eq:DefMut}. As we shall see, computations lead to the convergence of $\mathcal L_n$, as defined in \eqref{eq:DefLn}, toward
\[\mathcal Lf(y):=-ylf'(y)+\frac{\sigma^2}2f''(y),\]
where $l$ and $\sigma$ are defined below. Hence, the properly normalized process asymptotically behaves like the Ornstein-Uhlenbeck process; see Figure~\ref{figure_RandomWalk}. This process is the solution of the following stochastic differential equation (SDE):
\[dX_t=-lX_tdt+\sigma dW_t,\]
see \cite{Bak94} for instance. In the sequel, define $\mathscr F$ as in Theorem~\ref{theorem:APTrPT} with $N_2=3$, and $\varphi_i$ the projection on the $i^\text{th}$ coordinate.

\begin{proposition}[Results for the WRW]
Assume that
\[\E\left[\sum_{i=1}^D\varphi_i(E_{n+1})^2\right]=\sigma^2,\quad\sup_{n\geq 1}\gamma_n^2\omega_n^4\E[\|E_n\|^4]<+\infty,\quad\sup_n\gamma_n \sum_{i=1}^n\omega_i^2<+\infty,\]
and that there exist $l>0$ and $\beta>1$ such that
\begin{equation}
\sqrt{\frac{\gamma_n}{\gamma_{n+1}}}-1-\sqrt{\gamma_n\gamma_{n+1}}=-\gamma_nl+\mathcal O(\gamma_n^{\beta}).
\label{eq:RW1}\end{equation}
Then $(\mu_t)$ is an asymptotic pseudotrajectory of $\Phi$, with respect to $d_{\mathscr F}$.

Moreover, if $\lambda(\gamma,\gamma^{(\beta-1)\wedge\frac12})>0$ then, for any $u<l\lambda(\gamma,\gamma^{(\beta-1)\wedge\frac12})(l+\lambda(\gamma,\gamma^{(\beta-1)\wedge\frac12}))^{-1}$, there exists a constant $C$ such that, for all $t>0$,
\begin{equation}
d_{\mathscr F}\left(\mu_t,\pi\right)\leq C\e^{-ut},
\label{eq:RW3}
\end{equation}
where $\pi$ is the Gaussian distribution $\mathscr N\left(0,\sigma^2/(2l)\right)$.

Moreover, the sequence of processes $(Y^{(t)}_s)_{s\geq0}$ converges in distribution, as $t\to+\infty$, toward $(X^\pi_s)_{s\geq0}$ in the Skorokhod space.
\label{proposition:ResultWRW}\end{proposition}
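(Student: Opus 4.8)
The plan is to verify Assumptions~\ref{assumption:convergence}, \ref{assumption:regularity}, \ref{assumption:moments}, \ref{assumption:ergodicity} and \ref{assumption:variance} for this model (with $d_1=2$, $N_1=2$, $N_2=3$) and then read off the three assertions from Theorems~\ref{theorem:APTrPT}, \ref{theorem:SpeedConv}.ii) and \ref{theorem:functionalCV}. Everything rests on the explicit one–step identity obtained by substituting $y_n=x_n\gamma_n^{-1/2}$ into $x_{n+1}=(1-\gamma_{n+1})x_n+\gamma_{n+1}E_{n+1}$, namely
\[
y_{n+1}=(1+a_n)\,y_n+\sqrt{\gamma_{n+1}}\,E_{n+1},\qquad a_n:=\sqrt{\tfrac{\gamma_n}{\gamma_{n+1}}}-1-\sqrt{\gamma_n\gamma_{n+1}},
\]
so that \eqref{eq:RW1} reads $a_n=-l\gamma_n+\mathcal O(\gamma_n^\beta)$; since this forces $\gamma_n/\gamma_{n+1}\to1$ with $\gamma_n/\gamma_{n+1}-1=\mathcal O(\gamma_n)$, one gets $a_n/\gamma_{n+1}=-l+\mathcal O(\gamma_n^{(\beta-1)\wedge 1})$ and $a_n^2/\gamma_{n+1}=\mathcal O(\gamma_n)$.

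For Assumption~\ref{assumption:convergence} I would Taylor expand $f(y_{n+1})-f(y)$ to second order about $y$ (legitimate since $f\in\mathscr C^3_b$) and take the conditional expectation given $y_n=y$: as $\E[E_{n+1}]=0$ the mixed terms vanish, the first–order term yields $\tfrac{a_n}{\gamma_{n+1}}\,y\cdot\nabla f(y)\to-l\,y\cdot\nabla f(y)$, the second–order term yields $\tfrac12\sum_{i,j}\partial_{ij}f(y)\,\E[\varphi_i(E_{n+1})\varphi_j(E_{n+1})]$, which (using $\E[\sum_i\varphi_i(E_{n+1})^2]=\sigma^2$ together with the obvious bookkeeping of the limiting covariance) produces the $\tfrac{\sigma^2}{2}f''$ part of $\mathcal L$, plus $\tfrac12\tfrac{a_n^2}{\gamma_{n+1}}\,y^{\top}\nabla^2 f(y)\,y=\mathcal O(\gamma_n\|y\|^2)$, and there remains the third–order term, of size $\gamma_{n+1}^{-1}\mathcal O\!\big(\|f^{(3)}\|_\infty\,\E\|y_{n+1}-y\|^3\big)$. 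Matching the first two contributions against $\mathcal Lf$ and bounding the rest gives Assumption~\ref{assumption:convergence} with $\chi_{d_1}=\chi_2$ and $\epsilon_n$ of order $\gamma_n^{(\beta-1)\wedge1}\vee\gamma_n^{1/2}=\gamma_n^{(\beta-1)\wedge\frac12}$. The main obstacle is exactly this last step — controlling $\gamma_{n+1}^{-1}\E\|y_{n+1}-y\|^3$ uniformly in $y$ with rate $\gamma_n^{1/2}$ — since $\sqrt{\gamma_{n+1}}\,E_{n+1}$ is not uniformly small: the natural route is to truncate $E_{n+1}$ at a level $\delta_n\downarrow0$, estimate $\E[\|E_{n+1}\|^3;\sqrt{\gamma_{n+1}}\|E_{n+1}\|\le\delta_n]\le\delta_n\gamma_{n+1}^{-1/2}\E[\|E_{n+1}\|^2]=\delta_n\gamma_{n+1}^{-1/2}\sigma^2$, and bound the complementary tail by Cauchy–Schwarz and Markov using the fourth–moment hypothesis $\sup_n\gamma_n^2\omega_n^4\E[\|E_n\|^4]<\infty$ and $\sup_n\gamma_n\sum_{i\le n}\omega_i^2<\infty$, optimising $\delta_n$.

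Assumption~\ref{assumption:regularity} is the smoothing property of the Ornstein–Uhlenbeck semigroup: from $P_tf(x)=\E f\!\big(e^{-lt}x+\sigma\!\int_0^t e^{-l(t-s)}dW_s\big)$ one reads $(P_tf)^{(j)}(x)=e^{-jlt}\E[f^{(j)}(\cdots)]$, so $P_tf\in\mathscr C^{N_1}_b$ with $\|(P_tf)^{(j)}\|_\infty\le\|f^{(j)}\|_\infty$; hence $C_T\equiv1$ works (a special case of Lemma~\ref{lemma:DerivativesDiffusion}) and $TC_T=T\le M_4e^{rT}$ for \emph{every} $r>0$. Assumption~\ref{assumption:moments}.i) follows from the $L^2$ computation $\E[\|y_n\|^2]=\gamma_n^{-1}\sigma^2\big(\sum_{k\le n}\omega_k^2\big)\big/\big(\sum_{k\le n}\omega_k\big)^2$, which is bounded under the standing hypotheses, while a Rosenthal (or Burkholder–Davis–Gundy) bound on the martingale $\sum_k\omega_kE_k$ combined with the fourth–moment hypothesis controls $\sup_n\E[\|y_n\|^4]$ and delivers Assumption~\ref{assumption:moments}.i) for a larger multi-index $d$ (needed below). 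Theorem~\ref{theorem:APTrPT} then gives the asymptotic pseudotrajectory property, which is the first assertion.

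For the rate, take $\pi=\mathscr N\!\big(0,\sigma^2/(2l)\big)$ (the invariant law of $dX_t=-lX_t\,dt+\sigma\,dW_t$) and $\mathscr G=\{f:\mathrm{Lip}(f)\le1\}$, so that $d_{\mathscr G}$ is the Wasserstein-$1$ distance and $\mathscr F\subseteq\mathscr G$ (because $\|f'\|_\infty\le1$ for $f\in\mathscr F$). The synchronous coupling of two solutions of the SDE satisfies $X_t-X'_t=e^{-lt}(X_0-X'_0)$, hence $d_{\mathscr G}(\Phi(\nu,t),\Phi(\nu',t))\le e^{-lt}d_{\mathscr G}(\nu,\nu')$; together with $\sup_s d_{\mathscr G}(\mu_s,\pi)<\infty$ (from $\sup_n\E[\|y_n\|]<\infty$) this yields $d_{\mathscr G}(\Phi(\mu_s,t),\pi)\le M_3e^{-lt}$ uniformly in $s$, i.e. Assumption~\ref{assumption:ergodicity}.ii) with $v=l$ and arbitrarily small $r>0$. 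Theorem~\ref{theorem:SpeedConv}.ii), with $\epsilon_n=\gamma_n^{(\beta-1)\wedge\frac12}$ and $r\downarrow0$, then gives $d_{\mathscr F\cap\mathscr G}(\mu_t,\pi)=d_{\mathscr F}(\mu_t,\pi)\le Ce^{-ut}$ for every $u<l\,\lambda(\gamma,\gamma^{(\beta-1)\wedge\frac12})\big(l+\lambda(\gamma,\gamma^{(\beta-1)\wedge\frac12})\big)^{-1}$, which is \eqref{eq:RW3}. Finally, for the Skorokhod convergence I would check Assumption~\ref{assumption:variance} with $d_2=2$: using $\gamma_{n+1}\Gamma_nf=\Var\big(f(y_{n+1})\mid y_n\big)$ one finds $\Gamma_n\varphi_i(y)=\E[\varphi_i(E_{n+1})^2]\le\sigma^2$ and $\mathcal L_n\varphi_i(y)=\tfrac{a_n}{\gamma_{n+1}}\varphi_i(y)$, both $\le M_6\chi_{d_2}(y)$, while expanding the degree-$2$ polynomial $\chi_{d_2}(y_{n+1})$ and its conditional variance and invoking the second– and fourth–moment bounds on $E_{n+1}$ gives $\mathcal L_n\chi_{d_2}(y)\le M_6\chi_{d_2}(y)$ and $\Gamma_n\chi_{d_2}(y)\le M_6\chi_d(y)$ with $d$ as in Assumption~\ref{assumption:moments}. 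Theorem~\ref{theorem:functionalCV} then gives the claimed convergence in distribution of $(Y^{(t)}_s)_{s\ge0}$ toward the stationary Ornstein–Uhlenbeck process in the Skorokhod space.
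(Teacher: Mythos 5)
Your proposal is correct and follows essentially the same route as the paper: Taylor expansion of $\mathcal L_n$ toward the Ornstein--Uhlenbeck generator for Assumption~\ref{assumption:convergence} (the paper simply writes the third-order Lagrange remainder and bounds it by $\|f^{(3)}\|_\infty$ times moments of $E_{n+1}$, so your truncation of $E_{n+1}$ is an unnecessary detour at the same level of rigor), explicit OU derivatives giving Assumption~\ref{assumption:regularity} with $C_T=1$, a fourth-moment bound for Assumption~\ref{assumption:moments}, synchronous-coupling Wasserstein contraction giving Assumption~\ref{assumption:ergodicity}.ii) with $v=l$ and $r=0$, Assumption~\ref{assumption:variance} with $d_2=2$, and then Theorems~\ref{theorem:APTrPT}, \ref{theorem:SpeedConv}.ii) and \ref{theorem:functionalCV}. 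The only fix needed is bookkeeping: since your own error bound involves $\|f^{(3)}\|_\infty$ and cubic-in-$y$ terms, you must take $N_1=3$ and $d_1=3$ (not $N_1=d_1=2$), exactly as the paper does, which is harmless because $d=4\geq d_1$ and the OU derivative estimates hold for all orders.
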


\begin{figure}[htbp!]
\includegraphics[width=\textwidth]{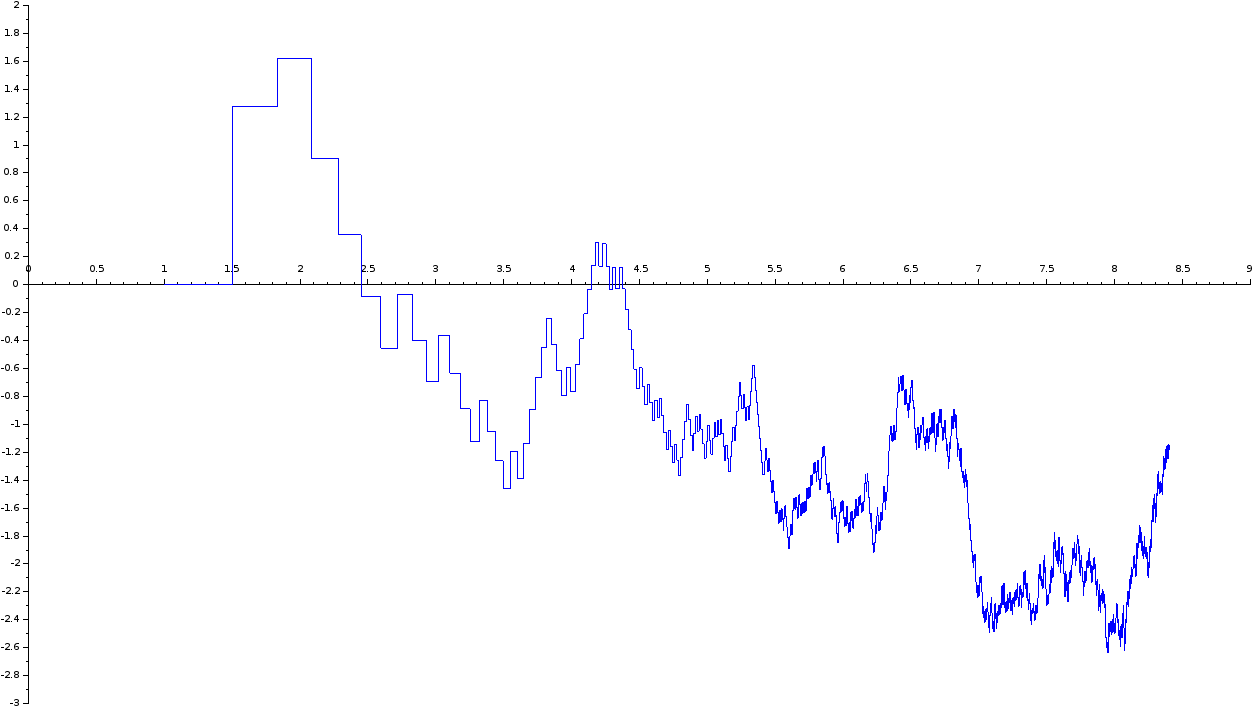}
\caption{Trajectory of the interpolated process for the normalized mean of the WRW with $\omega_n=1$ and $\mathscr L(E_n)=(\delta_{-1}+\delta_1)/2$.}
\label{figure_RandomWalk}\end{figure}

It is possible to recover the functional convergence using classical results: for instance, one can apply \cite[Theorem~2.1, Chapter~10]{KY03} with a slightly stronger assumption on $(\gamma_n)$. Yet, to our knowledge, the rate of convergence \eqref{eq:RW3} is original.

\begin{remark}[Powers of $n$]
Typically, if $\gamma_n\sim An^{-\alpha}$, then we can easily check that
\begin{itemize}
	\item if $\alpha=1$, then \eqref{eq:RW1} holds with $l=1-\frac1{2A}$ and $\beta=2$.
	\item if $0<\alpha<1$, then \eqref{eq:RW1} holds with $l=1$ and $\beta=\frac{1+\alpha}\alpha>2$.
\end{itemize}
Observe that, if $\omega_n=n^a$ for any $a>-1$, then $\gamma_n\sim\frac{1+a}n$ and \eqref{eq:RW1} holds with $l=\frac{1+2a}{2+2a}$ and $\beta=2$.
\label{remark:gammarandomwalk}
\end{remark}

We will see during the proof that checking Assumptions~\ref{assumption:convergence}, \ref{assumption:regularity}, \ref{assumption:moments} and \ref{assumption:ergodicity} is quite direct.

\begin{proof}[Proof of Proposition~\ref{proposition:ResultWRW}]
For the sake of simplicity, we do the computations for $D=1$. We have
\[y_{n+1}=\sqrt{\frac{\gamma_n}{\gamma_{n+1}}}y_n+\sqrt{\gamma_{n+1}}(E_{n+1}-\sqrt{\gamma_n}y_n),\]
so
\[\mathcal L_nf(y)=\gamma_{n+1}^{-1}\E[f(y_{n+1})-f(y_n)|y_n=y]=\gamma_{n+1}^{-1}\E[f(y+I_n(y))-f(y)],\]
with $I_n(y):=\left(\sqrt{\frac{\gamma_n}{\gamma_{n+1}}}-1-\sqrt{\gamma_n\gamma_{n+1}}\right)y+{\sqrt{\gamma_{n+1}}}E_{n+1}$. Simple Taylor expansions provide the following equalities (where $\mathcal O$ is the Landau notation, deterministic and uniform over $y$ and $f$, and $\beta:=\beta\wedge\frac32$):
\begin{align*}
I_n(y)&=\left(-\gamma_nl+\mathcal O(\gamma_n^{\beta})\right)y+\sqrt{\gamma_{n+1}}E_{n+1},\\
I_n^2(y)&=\gamma_{n+1}E^2_{n+1}+\chi_2(y)(1+E_{n+1})\mathcal O\left(\gamma_{n+1}^{\beta}\right),\\
I_n^3(y)&=\chi_3(y)(1+E_{n+1}+E_{n+1}^2+E_{n+1}^3)\mathcal O\left(\gamma_{n+1}^{\beta}\right).
\end{align*}
In the setting of Remark~\ref{remark:gammarandomwalk}, note that $\beta=\frac32$. Now, Taylor formula provides a random variable $\xi_n^y$ such that
\[f(y+I_n(y))-f(y)=I_n(y)f'(y)+\frac{I_n^2(y)}2f''(y)+\frac{I_n^3(y)}6f^{(3)}(\xi_n^y).\]
Then, it follows that
\begin{align}
\mathcal L_nf(y)&=\gamma_{n+1}^{-1}\E\left[\left.I_n(y)f'(y)+\frac{I_n^2(y)}2f''(y)+\frac{I_n^3(y)}6f^{(3)}(\xi_n^y)\right|y_n=y\right]\notag\\
 &=\gamma_{n+1}^{-1}\left[\left(-\gamma_nl+\mathcal O(\gamma_n^{3/2})\right)y+\sqrt{\gamma_{n+1}}\E[E_{n+1}]\right]f'(y)\notag\\
 &\quad+\frac1{2\gamma_{n+1}}\left[\gamma_{n+1}\E[E^2_{n+1}+\chi_2(y)\mathcal O\left(\gamma_{n+1}^{\beta}\right)\right]f''(y)\notag\\
 &\quad+\gamma_{n+1}^{-1}\chi_3(y)\E[1+E_{n+1}+E_{n+1}^{2}+E_{n+1}^{3}]\|f^{(3)}\|_\infty\mathcal O\left(\gamma_{n+1}^{\beta}\right)\notag\\
 &=-ylf'(y)+\chi_1(y)\|f'\|_\infty\mathcal O\left(\gamma_n^{\beta-1}\right)+\frac{\sigma^2}2f''(y)+\chi_2(y)\|f''\|_\infty\mathcal O\left(\gamma_n^{\beta-1}\right)\notag\\
 &\quad+\chi_3(y)\|f^{(3)}\|_\infty\mathcal O\left(\gamma_n^{\beta-1}\right).\label{eq:RW2}
\end{align}
From \eqref{eq:RW2}, we can conclude that
\[\left|\mathcal L_nf(y)-\mathcal Lf(y)\right|=\chi_3(y)(\|f'\|_\infty+\|f''\|_\infty+\|f^{(3)}\|_\infty)\mathcal O(\gamma_n^{\beta-1}).\]
As a consequence, the WRW satisfies Assumptions~\ref{assumption:convergence} with $d_1=3$, $N_1=3$ and $\epsilon_n=\gamma_n^{\beta-1}$. Note that (see Remark~\ref{remark:LambdaGammaEpsilon}) $\lambda(\gamma,\epsilon)=\beta-1$ if $\gamma_n=n^{-1}$.

Now, let us show that $P_tf$ admits bounded derivatives for $f\in\mathscr F$. Here, the expressions of the semigroup and its derivatives are explicit and the computations are simple (see \cite{Bak94,ABC+00}). Indeed, $P_tf(x)=\E[f(x\e^{-lt}+\sqrt{1-\e^{-2lt}}G)]$ and $(P_tf)^{(j)}(y)=\e^{-jlt}P_tf^{(j)}(y)$, where $\mathscr L(G)=\mathscr N(0,1)$. Then, it is clear that
\[\|(P_tf)^{(j)}\|_\infty=\e^{-jlt}\|P_tf^{(j)}\|_\infty\leq\|f^{(j)}\|_\infty.\]
Hence Assumption~\ref{assumption:regularity} holds with $N_2=3$ and $C_T=1$. Without loss of generality (in order to use Theorem~\ref{theorem:functionalCV} later) we set $d=4$.

Now, we check that the moments of order 4 of $y_n$ are uniformly bounded. Applying Cauchy-Schwarz's inequality:
\[\E\left[\left\|\sum_{i=1}^n\omega_iE_i\right\|^4\right]=\E\left[\sum_{i=1}^n\omega_i^4\|E_i\|^4+6\sum_{i<j}\omega_i^2 \|E_i^2\|\omega_j^2 \|E_j\|^2\right]\leq C\left(\sum_{i=1}^n\omega_i^2\right)^2,\]
for some explicit constant $C$. Then, since
\[\E[\|y_n\|^4]=\gamma_n^2\E\left[\left\|\sum_{i=1}^n\omega_iE_i\right\|^4\right]\leq C\sup_{n\geq1}\left(\gamma_n\sum_{i=1}^n\omega_i^2\right)^2,\]
the sequence $(y_n)_{n\geq0}$ satisfies Assumption~\ref{assumption:moments}.

It is classic, using coupling methods with the same Brownian motion for instance, that, for any probability measure $\nu$,
\[d_{\mathscr G}(\Phi(\nu,t),\pi)\leq d_{\mathscr G}(\nu,\pi)\e^{-lt},\]
where $\pi=\mathscr N\left(0,\sigma^2/(2l)I_D\right)$ and $d_\mathscr{G}$ is the Wasserstein distance ($\mathscr G$ is the set of 1-Lipschitz functions, see \cite{Che04}). We have, for $s,t>0$,
\[d_{\mathscr G}(\Phi(\mu_s,t),\pi)\leq d_{\mathscr G}(\mu_s,\pi)\e^{-lt}\leq(M_2+\pi(\chi_1))\e^{-lt}.\]
In other words, Assumption~\ref{assumption:ergodicity}.ii) holds for the WRW model with $M_3=M_2+\pi(\chi_1),M_4=1,v=l,r=0$ and $\mathscr F\subseteq\mathscr G$.

Finally, it is easy to check Assumption~\ref{assumption:variance} in the case of the WRW, with $d_2=2$, and then $\Gamma_n\chi_2\leq M_6\chi_4$ (that is why we set $d=4$ above).

Then, Theorems~\ref{theorem:APTrPT}, \ref{theorem:SpeedConv} and \ref{theorem:functionalCV} achieve the proof of Proposition~\ref{proposition:ResultWRW}.
\end{proof}

\begin{remark}[Building a limit process with jumps]
In this paper, we mainly provide examples of Markov chains converging (in the sense of Theorem~\ref{theorem:APTrPT}) toward diffusion processes (see Section~\ref{subsection:randomwalk}) or jump processes (see Section~\ref{subsection:bandit}). However, it is not hard to adapt the previous model to obtain an exemple converging toward a diffusion process with jumps (see Figure~\ref{figure_diffusionjump}): this illustrates how every component (drift, jump and noise) appears in the limit generator. The intuition is that the jump terms appear when larger and larger jumps of the Markov chain occur with smaller and smaller probability. For an example when $D=1$, take
\[\omega_n:=1,\quad E_n:=
\left\{\begin{array}{ll}
F_n&\text{ if }U_n\geq\sqrt{\gamma_n}\\
\gamma_n^{-1/2}G_n&\text{ if }U_n<\sqrt{\gamma_n}
\end{array}\right.
,\quad y_n:=\frac1{\sqrt{\gamma_n}}\sum_{k=1}^nE_k,\]
where $(F_n)_{n\geq1},(G_n)_{n\geq1}$ and $(U_n)_{n\geq1}$ are three sequences of i.i.d. random variables, such that $\E[F_1]=0,\E[F_1^2]=\sigma^2,\mathscr L(G_1)=Q$, $\mathscr L(U_1)$ is the uniform distribution on $[0,1]$. In this case, $\gamma_n=1/n$ and it is easy to show that $\mathcal L_n$ as defined in \eqref{eq:DefLn} converges toward the following infinitesimal generator:
\[\mathcal Lf(y):=-\frac12yf'(y)+\frac{\sigma^2}2f''(y)+\int_\R[f(y+z)-f(y)]Q(dz),\]
so that Assumption~\ref{assumption:convergence} holds with $d_1=3$, $N_1=3,\epsilon_n=n^{-1/2}$.

\begin{figure}[htbp!]
\includegraphics[width=\textwidth]{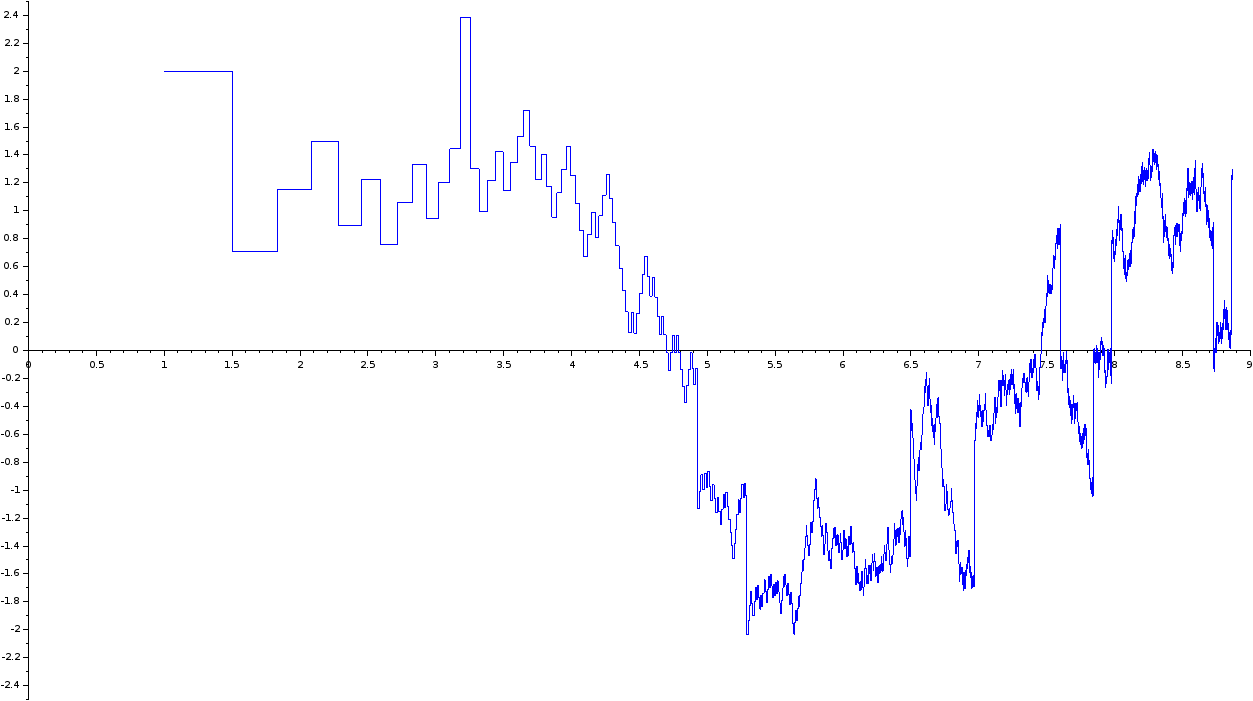}
\caption{Trajectory of the interpolated process for the toy model of Remark~\ref{remark:RWjumps} with $\mathscr L(F_n)=\mathscr L(G_n)=(\delta_{-1}+\delta_1)/2$.}
\label{figure_diffusionjump}\end{figure}

\label{remark:RWjumps}\end{remark}

\subsection{Penalized Bandit Algorithm}
\label{subsection:bandit}
In this section, we slightly generalize the penalized bandit algorithm (PBA) model introduced by Lamberton and Pagès, and we recover \cite[Theorem~4]{LP08}. Such algorithms aim at optimizing the gain in a game with two choices, $A$ and $B$, with respective unknown gain probabilities $p_A$ and $p_B$. Originally, $A$ and $B$ are the two arms of a slot machine, or \emph{bandit}. Throughout this section, we assume $0\leq p_B<p_A\leq 1$.

Let $s:[0,1]\to[0,1]$ be a function, which can be understood as a player's strategy, such that $s(0)=0,s(1)=1$. Let $x_n\in[0,1]$ be a measure of her trust level in $A$ at time $n$. She chooses $A$ with probability $s(x_n)$ independently from the past, and updates $x_n$ as follows:
\begin{center}\begin{tabular}{|l|l|l|}
\hline
$x_{n+1}$&Choice&Result\\\hline
$x_n+\gamma_{n+1}(1-x_n)$&$A$&Gain\\\hline
$x_n-\gamma_{n+1}x_n$&$B$&Gain\\\hline
$x_n+\gamma_{n+1}^2(1-x_n)$&$B$&Loss\\\hline
$x_n-\gamma_{n+1}^2x_n$&$A$&Loss\\\hline
\end{tabular}\end{center}
Then $(x_n)$ satisfies the following Stochastic Approximation algorithm:
\[x_{n+1}:=x_n+\gamma_{n+1}\left(X_{n+1}-x_n\right)+\gamma_{n+1}^2\left(\widetilde X_{n+1}-x_n\right),\]
where
\begin{equation}
(X_{n+1},\widetilde X_{n+1}):=
\left\{\begin{array}{ll}
(1,x_n)&\text{ with probability }p_1(x_n)\\
(0,x_n)&\text{ with probability }p_0(x_n)\\
(x_n,1)&\text{ with probability }\widetilde p_1(x_n)\\
(x_n,0)&\text{ with probability }\widetilde p_0(x_n)
\end{array}\right.,
\label{eq:PBP11}
\end{equation}
with
\begin{equation}
p_1(x)=s(x)p_A,\quad p_0(x)=(1-s(x))p_B,\quad \widetilde p_1(x)=(1-s(x))(1-p_B),\quad \widetilde p_0(x)=s(x)(1-p_A).
\label{eq:PBP12}
\end{equation}
Note that the PBA of \cite{LP08} is recovered by setting $s(x)=x$ in \eqref{eq:PBP12}.

From now on, we consider the algorithm \eqref{eq:PBP11} where $p_1,p_0,\widetilde p_1,\widetilde p_0$ are non-necessarily given by \eqref{eq:PBP12}, but are general non-negative functions whose sum is 1. Let $\mathscr F$ be as in Theorem~\ref{theorem:APTrPT} with $N_2=2$, and $y_n:=\gamma_n^{-1}(1-x_n)$ the rescaled algorithm. Let $\mathcal L_n$ be defined as in \eqref{eq:DefLn},
\begin{equation}
\mathcal Lf(y):=[\widetilde p_0(1)-yp_1(1)]f'(y)-yp_0'(1)[f(y+1)-f(y)],
\label{eq:PBP6}
\end{equation}
and $\pi$ the invariant distribution for $\mathcal L$ (which exists and is unique, see Remark~\ref{remark:stationaryPDMP}).

Under the assumptions of Proposition~\ref{proposition:ResultPBA}, it is straightforward to mimic the results \cite{LP08} and ensure that our generalized algorithm $(x_n)_{n\geq0}$ satisfies the ODE Lemma (see e.g. \cite[Theorem~2.1, Chapter~5]{KY03}), and converges toward 1 almost surely.

\begin{proposition}[Results for the PBA]
Assume that $\gamma_n=n^{-1/2}$, that $p_1,\widetilde p_1,\widetilde p_0\in C^1_b, p_0\in C^2_b$, and that
\[p_0(1)=\widetilde p_1(1)=0,\quad p_0'(1)\leq0,\quad p_1(1)+p_0'(1)>0,\quad\widetilde p_1(0)>0.\]
If, for $0<x<1$, $(1-x)p_1(x)>xp_0(x)$, then $(\mu_t)$ is an asymptotic pseudotrajectory of $\Phi$, with respect to $d_{\mathscr F}$.

Moreover, $(\mu_t)$ converges to $\pi$ and the sequence of processes $(Y^{(t)}_s)_{s\geq0}$ converges in distribution, as $t\to+\infty$, toward $(X^\pi_s)_{s\geq0}$ in the Skorokhod space.
\label{proposition:ResultPBA}\end{proposition}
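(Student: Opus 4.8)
The plan is to verify, one by one, the hypotheses of Theorems~\ref{theorem:APTrPT}, \ref{theorem:SpeedConv} and \ref{theorem:functionalCV} for the rescaled algorithm $y_n=\gamma_n^{-1}(1-x_n)$, and then simply invoke those theorems. The structure mirrors the proof of Proposition~\ref{proposition:ResultWRW}: the bulk of the work is a Taylor expansion of $\mathcal L_n$ near $x=1$ to establish Assumption~\ref{assumption:convergence}, and then a collection of short verifications for the remaining assumptions.

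First I would write the recursion for $y_n$. From $x_{n+1}=x_n+\gamma_{n+1}(X_{n+1}-x_n)+\gamma_{n+1}^2(\widetilde X_{n+1}-x_n)$ and $y_n=\gamma_n^{-1}(1-x_n)$, with $\gamma_n=n^{-1/2}$ so that $\gamma_n/\gamma_{n+1}=\sqrt{(n+1)/n}=1+\tfrac1{2n}+\mathcal O(n^{-2})=1+\tfrac12\gamma_{n+1}^2+\mathcal O(\gamma_{n+1}^4)$, one gets an expression of the form $y_{n+1}=\frac{\gamma_n}{\gamma_{n+1}}y_n - \frac{\gamma_n}{\gamma_{n+1}}(X_{n+1}-x_n) - \gamma_{n+1}(\widetilde X_{n+1}-x_n)\frac{\gamma_n}{\gamma_{n+1}}\cdots$; the point is that each of the four branches of \eqref{eq:PBP11} moves $y_n$ by an explicit amount. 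On the event $\{X_{n+1}=0\}$ (probability $p_0(x_n)$), $1-x_{n+1}=(1-x_n)+\gamma_{n+1}x_n$, i.e. $y_n\mapsto \frac{\gamma_n}{\gamma_{n+1}}y_n+\frac{\gamma_n}{\gamma_{n+1}}x_n$, which is the jump of size roughly $+1$; the other three branches contribute only $\mathcal O(\gamma_{n+1})$-size displacements of $y_n$. Using $p_0(x_n)=p_0(1)-(1-x_n)p_0'(1)+\mathcal O((1-x_n)^2)=-\gamma_n y_n p_0'(1)+\mathcal O(\gamma_n^2 y_n^2)$ (here $p_0(1)=0$ is used) and $p_1(x_n)=p_1(1)+\mathcal O(\gamma_n y_n)$, $\widetilde p_0(x_n)=\widetilde p_0(1)+\mathcal O(\gamma_n y_n)$, a second-order Taylor expansion of $f$ then yields
\[
\mathcal L_n f(y)=[\widetilde p_0(1)-y p_1(1)]f'(y)-y p_0'(1)[f(y+1)-f(y)]+\chi_{d_1}(y)\Big(\sum_{j=0}^{N_1}\|f^{(j)}\|_\infty\Big)\mathcal O(\gamma_n),
\]
which is exactly Assumption~\ref{assumption:convergence} with $N_1=2$, $\epsilon_n=\gamma_n=n^{-1/2}$, and some multi-index $d_1$ (one needs $\chi_{d_1}$ large enough to absorb the remainder, say $d_1=3$). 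The term $-yp_0'(1)\geq 0$ is the jump rate, consistent with $\mathcal L$ in \eqref{eq:PBP6}; I would be careful that the assumption $p_1(1)+p_0'(1)>0$ is what makes the deterministic drift $\widetilde p_0(1)-yp_1(1)$ together with the compensator $-yp_0'(1)$ push $y$ back toward $0$, giving ergodicity.

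Next come the easier verifications. Assumption~\ref{assumption:regularity} (regularity of the limit semigroup): the limit process is a PDMP on $\R_+$ with bounded jump rate growth, so I would invoke Proposition~\ref{proposition:regPDMP} — the result announced in the text for "large classes of PDMPs" — to get $P_t f\in\mathscr C_b^{N_1}$ with the stated bound, taking $N_2=2$. Assumption~\ref{assumption:moments} (uniform moment bound): I would use the Lyapunov version \emph{ii)}, checking $\mathcal L_n V\le -\alpha V+\beta$ for $V(y)=\e^{\theta y}$ or a polynomial $V=\chi_d$; the computation is the same Taylor expansion as above, and the negative drift $-yp_1(1)$ plus the controlled jump rate $-yp_0'(1)$ (which makes jumps rare for large $y$) give the required contraction for $y$ large, with $d$ taken as large as needed. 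Assumption~\ref{assumption:ergodicity}: since the limit PDMP is exponentially ergodic — this is essentially \cite[Theorem~4]{LP08} or a coupling argument for one-dimensional PDMPs — I would establish \emph{ii)} (or \emph{iii)} if only sub-geometric ergodicity is available cheaply), noting $TC_T\le M_4\e^{rT}$ holds because $C_T$ grows at most exponentially in $T$ from Proposition~\ref{proposition:regPDMP}. Finally Assumption~\ref{assumption:variance}: the operator $\Gamma_n\varphi$ and $\Gamma_n\chi_{d_2}$ are computed directly from the four-branch dynamics — $\gamma_{n+1}\Gamma_n f=K_nf^2-(K_nf)^2$ is just the one-step conditional variance — and each is $\mathcal O(\chi_{d_2}(y))$ resp. $\mathcal O(\chi_d(y))$ because the jump of size $1$ occurs with probability $\mathcal O(\gamma_n y_n)$; take $d_2$ a couple of units below $d$.

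The main obstacle is the Taylor-expansion bookkeeping in the first step: one must track how the rescaling by $\gamma_n^{-1}$ interacts with the two different scales $\gamma_{n+1}$ and $\gamma_{n+1}^2$ in the algorithm, verify that the "loss" branches (probabilities $\widetilde p_1,\widetilde p_0$, displacements of order $\gamma_{n+1}^2$) really are negligible at order $\gamma_n$, and confirm that all remainders are dominated by a single $\chi_{d_1}(y)\sum_{j\le N_1}\|f^{(j)}\|_\infty$ uniformly over $y\in\mathrm{Supp}(\mathscr L(y_n))$ — in particular that $y_n$ stays in a region where $x_n$ is close enough to $1$ for the expansions of $p_0,p_1,\widetilde p_0$ to be valid, which ties back to why Assumption~\ref{assumption:moments} is needed. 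The secondary obstacle is quoting the correct ergodicity statement for the limit PDMP with an explicit exponential rate; if that is not readily available I would fall back on Assumption~\ref{assumption:ergodicity}.iii), which still yields the stated weak convergence of $(\mu_t)$ to $\pi$ and, combined with Assumption~\ref{assumption:variance}, the functional (Skorokhod) convergence via Theorem~\ref{theorem:functionalCV}.
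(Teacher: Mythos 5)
Your verification of Assumptions~\ref{assumption:convergence} and \ref{assumption:regularity} (Taylor expansion around $x=1$ with $N_1=2$, $d_1=3$, $\epsilon_n=\gamma_n$, then Proposition~\ref{proposition:regPDMP} with $N_2=2$) matches the paper. The genuine gap is in your treatment of Assumption~\ref{assumption:moments}: you claim it follows from a direct Lyapunov computation, ``the same Taylor expansion as above'', for $V(y)=\e^{\theta y}$ or $V=\chi_d$. This does not work, and the paper explicitly states that uniform boundedness of the moments of $(y_n)$ is an open problem (it is not even known to be true; cf.\ the reference to \cite[Remark~4.4]{GPS15}). The reason is that the expansion $p_0(1-\gamma_n y)=-\gamma_n y\,p_0'(1)+\dots$ controls the upward-jump probability only when $\gamma_n y$ is small, whereas the support of $\mathscr L(y_n)$ is all of $[0,\gamma_n^{-1}]$. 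For $y$ of order $\gamma_n^{-1}$, i.e.\ $x_n$ far from $1$, the probability $p_0(1-\gamma_n y)$ of a jump of size $\approx 1$ need not be small at all (in the motivating case \eqref{eq:PBP12}, $p_0(0)=p_B>0$), while the restoring branch has probability $p_1(1-\gamma_n y)$ which may vanish there (since $s(0)=0$). Consequently $\mathcal L_n V(y)$ contains a positive term of order $\gamma_{n+1}^{-1}(\e^{\theta}-1)V(y)$ on part of the support, and no inequality $\mathcal L_nV\leq-\alpha V+\beta$ uniform in $n$ can be obtained this way. Your closing remark that one must ``confirm that $y_n$ stays in a region where $x_n$ is close enough to $1$'' is precisely the unresolved difficulty, not a bookkeeping item.

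The paper's proof supplies the missing idea: a truncation coupled with the original chain. One defines $y_n^{(l,\delta)}$ which coincides with $y_n$ up to time $l$ and is thereafter capped at $\delta\gamma_n^{-1}$; Lemma~\ref{lemma:truncationAPT} shows that for $\delta$ small and $l$ large this truncated chain satisfies Assumptions~\ref{assumption:convergence}, \ref{assumption:regularity}, \ref{assumption:moments} (via the Lyapunov function $\e^{\theta y}$, which now works because $\gamma_n y\leq\delta$ on the truncated support) and \ref{assumption:variance}. Then, since $x_n\to1$ a.s.\ (by the ODE lemma, which is exactly where the hypotheses $(1-x)p_1(x)>xp_0(x)$ and $\widetilde p_1(0)>0$ enter -- hypotheses your argument never uses), one can choose $l$ so that $\prob(\forall n\geq l,\ \gamma_n y_n\leq\delta)\geq1-\varepsilon$, and the pseudotrajectory estimate and the Skorokhod convergence for $(\mu_t)$ are deduced from those of $(\mu_t^{(l,\delta)})$ up to an error $4\varepsilon$ (resp.\ $2\varepsilon$), with $\varepsilon$ arbitrary. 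Without this truncation-and-transfer step (or a substitute for it), invoking Theorems~\ref{theorem:APTrPT} and \ref{theorem:functionalCV} for the original chain is not justified. A secondary inaccuracy: since $\gamma_n=n^{-1/2}$, $\lambda(\gamma,\epsilon)=0$, so the exponential-rate parts of Theorem~\ref{theorem:SpeedConv} you allude to are unavailable here; only the qualitative convergence to $\pi$ can be asserted, as in the statement.
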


\begin{figure}[htbp!]
\includegraphics[width=\textwidth]{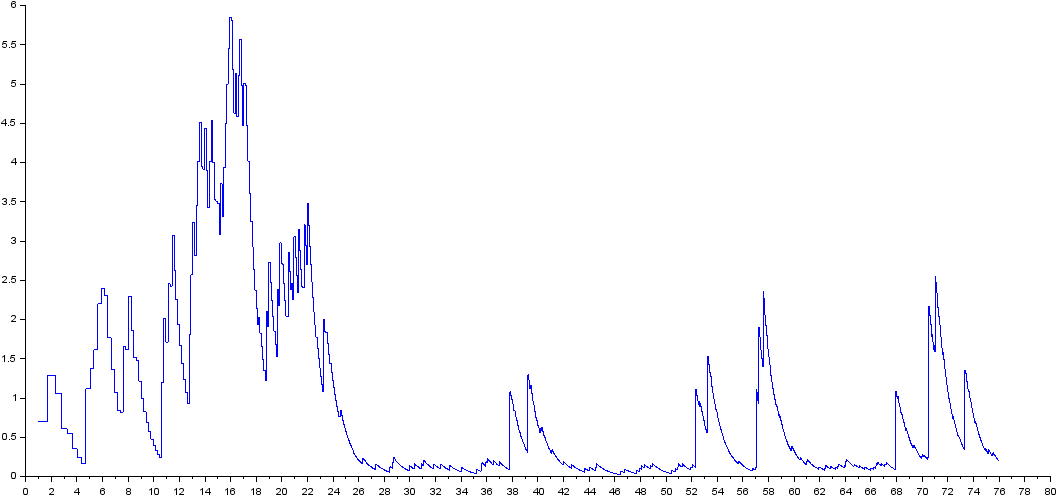}
\caption{Trajectory of the interpolated process for the rescaled PBA, setting $s(x)=x$ in \eqref{eq:PBP12}.}
\label{figure_PenalizedBandit}\end{figure}

The proof is given at the end of the section; before that, let us give some interpretation and heuristic explanation of the algorithm. The random sequence $(y_n)$ satisfies
\[y_{n+1}=y_n+\left(\frac{\gamma_n}{\gamma_{n+1}}-1\right)y_n-(X_{n+1}-x_n)-\gamma_{n+1}(\widetilde X_{n+1}-x_n),\]
thus, defining $\mathcal L_n$ as in \eqref{eq:DefLn},
\[\mathcal L_nf(y)=\gamma_{n+1}^{-1}\E\left[\left.f(y_{n+1})-f(y_n)\right|y_n=y\right]=\gamma_{n+1}^{-1}\E\left[f(y+I_n(y))-f(y)|y_n=y\right],\]
where
\begin{equation}
I_n(y):=
\left\{\begin{array}{ll}
I^1_n(y):=\left(\frac{\gamma_n}{\gamma_{n+1}}-1-\gamma_n\right) y &\text{ with probability }p_1(1 - \gamma_n y)\\
I^0_n(y):=1+\left(\frac{\gamma_n}{\gamma_{n+1}}-1-\gamma_n\right)  y &\text{ with probability }p_0(1 - \gamma_n y)\\
\widetilde I^1_n(y):=\left(\frac{\gamma_n}{\gamma_{n+1}}-1-\gamma_n\gamma_{n+1}\right)y&\text{ with probability }\widetilde p_1(1 - \gamma_n y)\\
\widetilde I^0_n(y):=\gamma_{n+1}+\left(\frac{\gamma_n}{\gamma_{n+1}}-1-\gamma_n\gamma_{n+1}\right)y &\text{ with probability }\widetilde p_0(1 - \gamma_n y)
\end{array}\right..
\label{eq:PBP7}
\end{equation}
Taylor expansions provide the convergence of $\mathcal L_n$ toward $\mathcal L$. As a consequence, the properly renormalized interpolated process will asymptotically behave like a PDMP (see Figure~\ref{figure_PenalizedBandit}). Classically, one can read the dynamics of the limit process through its generator (see e.g. \cite{Dav93}): the PDMP generated by \eqref{eq:PBP6} has upward jumps of height 1 and follows the flow given by the ODE $y'=\widetilde p_0(1)-yp_1(1)$, which means it converges exponentially fast toward~$\widetilde p_0(1)/p_1(1)$.

\begin{remark}[Interpretation]
Consider the case \eqref{eq:PBP12}. Here Proposition~\ref{proposition:ResultPBA} states that the rescaled algorithm $(y_n)$ behaves asymptotically like the process generated by
\[\mathcal Lf(x)=(1-p_A-xp_A)f'(x)+p_Bs'(1)x[f(x+1)-f(x)].\]
Intuitively, it is more and more likely to play the arm $A$ (the one with the greatest gain probability). Its successes and failures appear within the drift term of the limit infinitesimal generator, whereas playing the arm $B$ with success will provoke a jump. Finally, playing the arm $B$ with failure does not affect the limit dynamics of the process (as $\widetilde p_1$ does not appear within the limit generator). To carry out the computations in this section, where we establish the speed of convergence of $(\mathcal L_n)$ toward $\mathcal L$, the main idea is to condition $\E[y_{n+1}]$ given typical events on the one hand, and rare events on the other hand. Typical events generally construct the drift term of $\mathcal L$ and rare events are responsible of the jump term of $\mathcal L$ (see also Remark~\ref{remark:RWjumps}).

Note that one can tune the frequency of jumps with the parameter $s'(1)$. The more concave $s$ is in a neighborhood of 1, the better the convergence is. In particular, if $s'(1)=0$, the limit process is deterministic. Also, note that choosing a function $s$ non-symmetric with respect to $(1/2,1/2)$ introduces an a priori bias; see Figure~\ref{figure_strategies}.

\begin{figure}[htbp!]
\begin{center}
\begin{tikzpicture}[scale=3]
\draw[dashed] (0,1) -- (1,1);
\draw[dashed] (1,0) -- (1,1);
\draw[->] (-0.1,0) node[below]{0} -- (1,0) node[below]{1};
\draw[->] (0,-0.1) -- (0,1) node[left]{1};
\draw [domain=0:1,thick,smooth,color=blue] plot (\x,{1*\x});
\end{tikzpicture}
\qquad
\begin{tikzpicture}[scale=3]
\draw[dashed] (0,1) -- (1,1);
\draw[dashed] (1,0) -- (1,1);
\draw[->] (-0.1,0) node[below]{0} -- (1,0) node[below]{1};
\draw[->] (0,-0.1) -- (0,1) node[left]{1};
\draw [domain=0:0.5,thick,smooth,color=blue] plot (\x,{4*\x^3});
\draw [domain=0.5:1,thick,smooth,color=blue] plot (\x,{4*(\x-1)^3+1});
\end{tikzpicture}
\qquad
\begin{tikzpicture}[scale=3]
\draw[dashed] (0,1) -- (1,1);
\draw[dashed] (1,0) -- (1,1);
\draw[->] (-0.1,0) node[below]{0} -- (1,0) node[below]{1};
\draw[->] (0,-0.1) -- (0,1) node[left]{1};
\draw [domain=0:1,thick,smooth,color=blue] plot (\x,{-(\x-1)^2+1});
\end{tikzpicture}
\end{center}
\caption{Various strategies for $s(x)=x$, $s$ concave, $s$ with a bias}
\label{figure_strategies}
\end{figure}
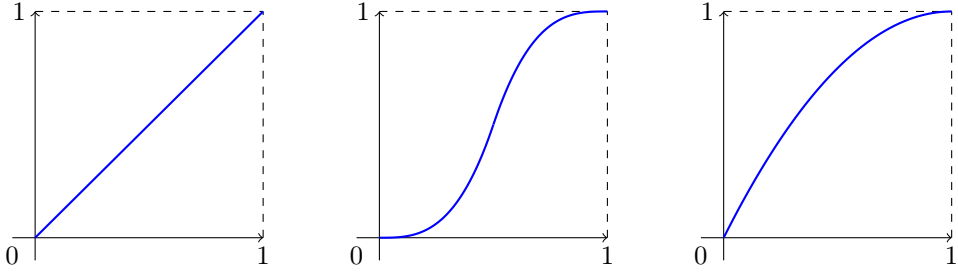

\label{remark:stategyPBA}
\end{remark}

Let us start the analysis of the rescaled PBA with a global result about a large class of PDMPs, whose proof is postponed to Section~\ref{section:appendix}. This lemma provides the necessary arguments to check Assumption~\ref{assumption:regularity}.

\begin{proposition}[Assumption~\ref{assumption:regularity} for PDMPs]
Let $X$ be a PDMP with infinitesimal generator
\[\mathcal Lf(x)=(a-bx)f'(x)+(c+dx)[f(x+1)-f(x)],\]
such that $a,b,c,d\geq0$. Assume that either $b>0$, or $b=0$ and $a\neq0$. If $f\in\mathscr C^N_b$, then, for all $0\leq t\leq T$, $P_tf\in\mathscr C_b^N$. Moreover, for all $n\leq N$,
\[\|(P_tf)^{(n)}\|_\infty\leq
\left\{ \begin{array}{ll}
\sum_{k=0}^n{\left(\frac{2|d|}b\right)^{n-k}\|f^{(k)}\|_\infty} &\text{if }b>0\\
\sum_{k=0}^n\frac{n!}{k!}(2|d|T)^{n-k}\|f^{(k)}\|_\infty&\text{if }b=0
\end{array} \right..\]
\label{proposition:regPDMP}\end{proposition}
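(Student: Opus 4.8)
The plan is to exploit the fact that, for this generator, both the deterministic flow and the cumulative jump rate are \emph{affine} in the space variable, which reduces every differentiation to elementary chain-rule bookkeeping. Write $\mathcal L=\mathcal A+\mathcal B$, where $\mathcal Af(x)=(a-bx)f'(x)$ generates the flow $\phi_t$ of $\dot y=a-by$ and $\mathcal Bf(x)=\lambda(x)[f(x+1)-f(x)]$, $\lambda(x):=c+dx$, is the jump part (upward unit jumps at rate $\lambda$) on the natural state space on which $\lambda\geq0$. Since $\dot y=a-by$ is linear, $\phi_t(x)=\alpha_t+\beta_t x$ with $\beta_t=e^{-bt}\in(0,1]$ if $b>0$ and $\beta_t\equiv1$ if $b=0$; hence $\Lambda_t(x):=\int_0^t\lambda(\phi_s(x))\,ds$ is affine in $x$, with slope $B_t=d\int_0^t\beta_s\,ds$, so $B_t\leq|d|/b$ when $b>0$ and $B_t=|d|t\leq|d|T$ when $b=0$. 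Conditioning on the first jump yields the Duhamel/renewal identity
\[P_tf(x)=f(\phi_t(x))\,e^{-\Lambda_t(x)}+\int_0^t\lambda(\phi_s(x))\,e^{-\Lambda_s(x)}\,(P_{t-s}f)(\phi_s(x)+1)\,ds,\]
in which, crucially, $\int_0^t\lambda(\phi_s(x))e^{-\Lambda_s(x)}\,ds=1-e^{-\Lambda_t(x)}\leq1$.

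First I would settle the qualitative claim $P_tf\in\mathscr C_b^N$. Iterating the identity above expresses $P_tf$ as a series whose $m$-th term is an integral over a simplex of jump times, with integrand a product of weights $e^{-\Lambda}$ and $\lambda\circ\phi$ (composed with $x$ only through affine maps) times $f$ evaluated at an affine function of $x$. Differentiating such a term in $x$ produces a factor $\beta_s\leq1$ each time $\partial_x$ hits $f$ (chain rule, affine argument), and a bounded factor ($d\beta_s$ or $B_s\leq B_t$) each time it hits a weight; combined with $1-e^{-\Lambda_t(x)}\leq1$ controlling the "cost of one more jump", an induction on $m$ shows that the series of $k$-th $x$-derivatives converges uniformly for each $k\leq N$ (and for each $k\leq N+1$ if $f\in\mathscr C_b^{N+1}$). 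This gives $P_tf\in\mathscr C^N$, legitimates differentiation under the integral/sum, and already yields bounded derivatives; the sharp constants, however, are cleanest to extract from the backward equation.

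Then I would differentiate the backward equation. With $u(t,x)=P_tf(x)$ solving $\partial_tu=\mathcal Lu$, $u(0,\cdot)=f$, and using that $a-bx$ and $c+dx$ are affine (so Leibniz kills all but two terms of each product),
\[\partial_tu^{(n)}=(a-bx)\,u^{(n+1)}-nb\,u^{(n)}+(c+dx)\bigl[u^{(n)}(\cdot+1)-u^{(n)}\bigr]+nd\,\bigl[u^{(n-1)}(\cdot+1)-u^{(n-1)}\bigr].\]
Set $g_n(t):=\|u^{(n)}(t,\cdot)\|_\infty$ (finite by the previous step). Evaluating at a point where $|u^{(n)}(t,\cdot)|$ attains its maximum (legitimate for $\mathscr C_c^\infty$ data, for which $u^{(n)}(t,\cdot)$ vanishes at infinity, so I would treat such data first), the transport term vanishes at the interior extremum, the term $(c+dx)[u^{(n)}(\cdot+1)-u^{(n)}]$ has the sign opposite to $u^{(n)}$ there and is harmless, and the remaining term is at most $2nd\,g_{n-1}(t)$; a standard maximum-principle argument then gives
\[g_n'(t)\leq-nb\,g_n(t)+2nd\,g_{n-1}(t),\qquad g_n(0)=\|f^{(n)}\|_\infty,\qquad g_0(t)\leq\|f\|_\infty.\]
Grönwall yields $g_n(t)\leq e^{-nbt}\|f^{(n)}\|_\infty+2nd\int_0^te^{-nb(t-s)}g_{n-1}(s)\,ds$, and I would close by induction on $n$: if $b>0$, inserting $g_{n-1}(s)\leq\sum_{k=0}^{n-1}(2d/b)^{n-1-k}\|f^{(k)}\|_\infty$ and using $2nd\int_0^te^{-nb(t-s)}ds\leq2d/b$ gives $g_n(t)\leq\sum_{k=0}^n(2d/b)^{n-k}\|f^{(k)}\|_\infty$; if $b=0$, then $g_n(t)\leq\|f^{(n)}\|_\infty+2nd\int_0^tg_{n-1}(s)\,ds$, and inserting $g_{n-1}(s)\leq\sum_{k=0}^{n-1}\frac{(n-1)!}{k!}(2dT)^{n-1-k}\|f^{(k)}\|_\infty$ with $t\leq T$ gives $g_n(t)\leq\sum_{k=0}^n\frac{n!}{k!}(2dT)^{n-k}\|f^{(k)}\|_\infty$. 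Since $d=|d|\geq0$ these are exactly the asserted bounds. Finally I would drop the smoothness restriction: for $f\in\mathscr C_b^N$ take $f_m=f\cdot\chi(\cdot/m)$ (times a mollifier), so $P_tf_m\to P_tf$ pointwise and $\limsup_m\|f_m^{(k)}\|_\infty\leq\|f^{(k)}\|_\infty$ for $k\leq N$; the uniform derivative bounds just obtained give, via Arzelà--Ascoli, convergence of $P_tf_m$ in $\mathscr C^N_{\mathrm{loc}}$, hence $P_tf\in\mathscr C_b^N$ with the same bound.

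I expect the main obstacle to be the regularity/justification layer rather than the estimates: proving $P_tf$ is genuinely $\mathscr C^N$ (and $\mathscr C^{N+1}$ for smooth data, so the differentiated equation is meaningful), that the maximum-principle argument is valid on the \emph{non-compact} state space (handled by reducing to $\mathscr C_c^\infty$ data, where decay at infinity makes the extremum attained), and that the final approximation preserves the bound --- which works precisely because the bound depends on $f$ only through $\|f^{(k)}\|_\infty$, $k\leq N$. Everything else is the affine-composition bookkeeping and the one-line Grönwall induction above.
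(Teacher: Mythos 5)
Your proposal is correct in substance and arrives at exactly the paper's constants, because its computational core is the same: differentiating the backward equation $n$ times is precisely the commutator identity $\mathcal A_n\mathcal L=\mathcal L\mathcal A_n-nb\,\mathcal A_n+nd\,\mathcal J\mathcal A_{n-1}$ (with $\mathcal A_nf=f^{(n)}$, $\mathcal Jf=f(\cdot+1)-f$) used in the paper, and your Gr\"onwall induction on $g_n'\leq-nb\,g_n+2nd\,g_{n-1}$ is the paper's induction on $\psi_n$. Where you genuinely differ is in the surrounding machinery. For qualitative smoothness the paper stays with the single first-jump formula and makes its $x$-dependence manifestly smooth by the substitution $v=\Psi(x,u)$ (treating the point $a/b$ separately), thereby never needing the full jump-count expansion; you instead iterate the Duhamel identity into the series over numbers of jumps and differentiate term by term, which works but requires locally-uniform-in-$x$ moment bounds on the number of jumps (available here because the affine rate makes the jump count dominated by a linear birth process, but this should be said, since the rate $c+dx$ is unbounded). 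For converting the commutator identity into sup-norm bounds, the paper's device $\psi_n(s)=P_{t-s}\mathcal A_nP_sf$ uses only positivity and contractivity of $P_{t-s}$, so it needs neither a maximum principle nor a reduction to $\mathscr C_c^\infty$ data; your route through an extremum point must additionally handle the boundary $x=0$ (harmless since $a\geq0$ makes the drift term have the right sign there), the usual Dini-derivative care when differentiating a supremum, and the final cutoff/mollification step: as written, uniform bounds on derivatives up to order $N$ of $P_tf_m$ plus Arzel\`a--Ascoli only give convergence in $\mathscr C^{N-1}_{\mathrm{loc}}$ and a Lipschitz bound on $(P_tf)^{(N-1)}$, so you must combine this with the qualitative $\mathscr C^N$ regularity of $P_tf$ obtained separately from your series argument to upgrade it to the stated bound on $(P_tf)^{(N)}$. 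These are repairable technicalities rather than gaps; the trade-off is that your PDE/maximum-principle version is perhaps more familiar and transparent, while the paper's interpolation along the semigroup is shorter precisely because it bypasses the density and extremum arguments.
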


Note that a very similar result is obtained in \cite{BR15b}, but for PDMPs with a diffusive component.

\begin{remark}[The stationary probability distribution]
Let $(X_t)_{t\geq0}$ be the PDMP generated by $\mathcal L$ defined in Proposition~\ref{proposition:regPDMP}. By using the same tools as in \cite[Theorem~6]{LP08}, it is possible to prove existence and uniqueness of a stationary distribution $\pi$ on $\R_+$. Applying Dynkin's formula with $f(x)=x$, we get
\[\partial_t\E[X_t]=a+c-(b-d)\E[X_t].\]
If one uses the same technique with $f(x)=x^n$, it is possible to deduce the $n^\text{th}$ moment of the invariant measure $\pi$, and Dynkin's formula applied to $f(x)=\exp(\lambda x)$ provides exponential moments of $\pi$ (see \cite[Remark~2.2]{BMPPR15} for the detail).

In the setting of \eqref{eq:PBP6}, one can use the reasoning above to show that, by denoting by $m_n=\int_0^\infty x^n\pi(dx)$ for $n\geq0$,
\[m_n=\frac{-p_0'(1)}{n(p_1(1)+p_0'(1))}\sum_{k=1}^{n-2}\binom{n}{k-1}m_k+\frac{2\widetilde p_0(1)+(n-1)p_0'(1)}{2(p_1(1)+p_0'(1))}m_{n-1},\]
with the convention $\sum_{k=i+1}^i=0$.
\label{remark:stationaryPDMP}
\end{remark}

\begin{proof}[Proof of Proposition~\ref{proposition:ResultPBA}]
First, let us specify the announced convergence of $\mathcal L_n$ toward $\mathcal L$; recall that $\gamma_n=n^{-1/2}$ and $\chi_d(y)=\sum_{k=0}^d|y|^k$, so that $I_n(y)$ in \eqref{eq:PBP7} rewrites
\[ I_n(y)=
\left\{\begin{array}{ll}
\frac{\sqrt{n+1} - \sqrt{n} - 1}{\sqrt{n}} y &\text{ with probability }p_1(1 - \gamma_n y)\\
1+\frac{\sqrt{n+1} - \sqrt{n} - 1}{\sqrt{n}}  y &\text{ with probability }p_0(1 - \gamma_n y)\\
\frac{\sqrt{n}- \sqrt{n+1}}{\sqrt{n+1}} y&\text{ with probability }\widetilde p_1(1 - \gamma_n y)\\
\frac1{\sqrt{n+1}}+\frac{\sqrt{n}- \sqrt{n+1}}{\sqrt{n+1}} y &\text{ with probability }\widetilde p_0(1 - \gamma_n y)
\end{array}\right.,\]
and the infinitesimal generator rewrites
\begin{align}
\mathcal L_nf(y)&=\frac{p_1(1-\gamma_ny)}{\gamma_{n+1}}\left[f\left(y+I_n^1(y)\right)-f(y)\right]+\frac{p_0(1-\gamma_ny)}{\gamma_{n+1}}\left[f\left(y+I_n^0(y)\right)-f(y)\right]\notag\\
&\quad+\frac{\widetilde p_1(1-\gamma_ny)}{\gamma_{n+1}}\left[f\left(y+\widetilde I_n^1(y)\right)-f(y)\right]+\frac{\widetilde p_0(1-\gamma_ny)}{\gamma_{n+1}}\left[f\left(y+\widetilde I_n^0(y)\right)-f(y)\right].\label{eq:PBP8}
\end{align}
In the sequel, the Landau notation $\mathcal O$ will be deterministic and uniform over both $y$ and $f$.

First, we consider the first term of \eqref{eq:PBP8} and observe that
\[p_1(1-\gamma_ny)=p_1(1)+y\mathcal O(\gamma_n),\]
and that
\[I_n^1(y)=\left(\frac{\gamma_n}{\gamma_{n+1}}-1-\gamma_n\right)y=\left(\frac1{2n}+o(n^{-1})-\frac1{\sqrt n}\right)y=-y\gamma_n(1+\mathcal O(\gamma_n)),\]
so that $I_n^1(y)^2=y^2\mathcal O(\gamma_n^2).$ Since $\gamma_n\sim\gamma_{n+1}$, and since the Taylor formula gives a random variable $\xi_n^y$ such that
\[f\left(y+I_n^1(y)\right)-f(y)=I_n^1(y)f'(y)+\frac{I_n^1(y)^2}2f''(\xi_n^y),\]
we have
\[\gamma_{n+1}^{-1}\left[f\left(y+I_n^1(y)\right)-f(y)\right]=-yf'(y)+\chi_2(y)(\|f'\|_\infty+\|f''\|_\infty)\mathcal O(\gamma_n).\]
Then, easy computations show that
\begin{equation}
\frac{p_1(1-\gamma_ny)}{\gamma_{n+1}}\left[f\left(y+I_n^1(y)\right)-f(y)\right]=-p_1(1)yf'(y)+\chi_3(y)(\|f'\|_\infty+\|f''\|_\infty)\mathcal O(\gamma_n).
\label{eq:PBP1}
\end{equation}
The third term in \eqref{eq:PBP8} is expanded similarly and writes
\begin{equation}
\frac{\widetilde p_1(1-\gamma_ny)}{\gamma_{n+1}}\left[f\left(y+\widetilde I_n^1(y)\right)-f(y)\right]=\chi_3(y)(\|f'\|_\infty+\|f''\|_\infty)\mathcal O(\gamma_n),
\label{eq:PBP2}
\end{equation}
while the fourth term becomes
\begin{equation}
\frac{\widetilde p_0(1-\gamma_ny)}{\gamma_{n+1}}\left[f\left(y+\widetilde I_n^0(y)\right)-f(y)\right]=\widetilde p_0(1)f'(y)+\chi_3(y)(\|f'\|_\infty+\|f''\|_\infty)\mathcal O(\gamma_n).
\label{eq:PBP3}
\end{equation}
Note the slight difference with the expansion of the second term, since we have, on the one hand,
\[\frac{p_0(1-\gamma_ny)}{\gamma_{n+1}}=-\frac{\gamma_n}{\gamma_{n+1}} yp_0'(1)+\frac{\gamma_n^2}{\gamma_{n+1}}y^2p''(\xi_n^y) =-yp_0'(1)+\chi_2(y)\mathcal O(\gamma_n),\]
where $\xi_n^y$ is a random variable, while, on the other hand,
\[f(y+I_n^0(y))-f(y)=f(y+1)-f(y)+\chi_1(y)\|f'\|_\infty\mathcal O(\gamma_n).\]
Then,
\begin{align}
&\frac{p_0(1-\gamma_ny)}{\gamma_{n+1}}\left[f\left(y+I_n^0(y)\right)-f(y)\right]=\notag\\
&\quad-yp_0'(1)[f(y+1)-f(y)]+\chi_3(y)(\|f\|_\infty+\|f'\|_\infty)\mathcal O(\gamma_n).
\label{eq:PBP4}
\end{align}
Finally, combining \eqref{eq:PBP1}, \eqref{eq:PBP2}, \eqref{eq:PBP3} and \eqref{eq:PBP4}, we obtain the following speed of convergence for the infinitesimal generators:
\begin{equation}
\left|\mathcal L_nf(y)-\mathcal Lf(y)\right|=\chi_3(y)(\|f\|_\infty+\|f'\|_\infty+\|f''\|_\infty)\mathcal O(\gamma_n),
\label{eq:PBP5}
\end{equation}
establishing that the rescaled PBA satisfies Assumption~\ref{assumption:convergence} with $d_1=3$, $N_1=2$ and $\epsilon_n=\gamma_n$. Assumption~\ref{assumption:regularity} follows from Proposition~\ref{proposition:regPDMP} with $N_2=2$.

In order to apply Theorem~\ref{theorem:APTrPT}, it would remain to check Assumption~\ref{assumption:moments}, that is to prove that the moments of order 3 of $(y_n)$ are uniformly bounded. This happens to be very difficult and we do not even know whether it is true. As an illustration of this difficulty, the reader may refer to \cite[Remark~4.4]{GPS15}, where uniform bounds for the first moment are provided using rather technical lemmas, and only for an overpenalized version of the algorithm.

In order to overcome this technical difficulty, we introduce a truncated Markov chain coupled with $(y_n)$, which does satisfy a Lyapunov criterion. For $l\in\N^\star$ and $\delta\in(0,1]$, we define $(y_n^{(l,\delta)})_{n\geq0}$ as follows:
\[y_n^{(l,\delta)}:=
\left\{\begin{array}{ll}
y_n&\text{for }n\leq l\\
\left(y_{n-1}^{(l,\delta)}+I_{n-1}(y_{n-1}^{(l,\delta)})\right)\wedge\delta\gamma_n^{-1}&\text{for }n>l
\end{array}\right..\]
In the sequel, we denote with an exposant ${(l,\delta)}$ the equivalents of $\mathcal L_n,Y_t,\mu_t$ for $(y_n^{(l,\delta)})_{n\geq0}$. We prove that $(\mathcal L_n^{(l,\delta)})_{n\geq0}$ satisfies our main assumptions, and consequently $(\mu_t^{(l,\delta)})_{t\geq0}$ is an asymptotic pseudotrajectory of $\Phi$ (at least for $\delta$ small enough and $l$ large enough), which is the result of the combination of Lemma~\ref{lemma:truncationAPT} and Theorem~\ref{theorem:APTrPT}.

\begin{lemma}[Behavior of $(\mu_t^{(l,\delta)})_{t\geq0}$]
For $\delta$ small enough and $l$ large enough, the inhomogeneous Markov chain $(y_n^{(l,\delta)})_{n\geq0}$ satisfies Assumptions~\ref{assumption:convergence}, \ref{assumption:regularity}, \ref{assumption:moments} and \ref{assumption:variance}.
\label{lemma:truncationAPT}
\end{lemma}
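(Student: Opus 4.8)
The plan is to verify the four assumptions one at a time for the truncated chain $(y_n^{(l,\delta)})_{n\geq0}$, reusing the computations already carried out for the untruncated chain $(y_n)$ wherever the truncation is inactive, and handling the truncation event separately. The key observation is that truncation only modifies the dynamics on the event $\{y_{n-1}^{(l,\delta)}+I_{n-1}(y_{n-1}^{(l,\delta)})>\delta\gamma_n^{-1}\}$, and on the complement of that event the increment $I_{n-1}$ is identical to the one analyzed in \eqref{eq:PBP7}. So the whole proof is really an exercise in showing that the truncation event is either impossible on $\mathrm{Supp}(\mathscr L(y_n^{(l,\delta)}))$ for the relevant $y$, or contributes a negligible term.

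First I would check Assumption~\ref{assumption:convergence}. On $\mathrm{Supp}(\mathscr L(y_n^{(l,\delta)}))$ we have $y\leq\delta\gamma_n^{-1}$ by construction (for $n>l$), so $1-\gamma_ny\geq1-\delta\in[0,1)$, meaning the arguments of $p_1,p_0,\widetilde p_1,\widetilde p_0$ stay in $[0,1]$ and all the Taylor expansions \eqref{eq:PBP1}--\eqref{eq:PBP5} go through verbatim, \emph{provided} the truncation does not trigger. The increments $I_n^1,\widetilde I_n^1,\widetilde I_n^0$ are at most of order $\gamma_n y+\gamma_{n+1}\leq\delta+\gamma_{n+1}$, and $I_n^0$ adds a jump of size $1$; so $y+I_n(y)\leq\delta\gamma_n^{-1}(1+\mathcal O(\gamma_n))+1$, which for $\delta$ small and $n$ large is compatible with $y+I_n(y)\leq\delta\gamma_{n+1}^{-1}$ only if $y$ is already somewhat below $\delta\gamma_n^{-1}$. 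The clean way around this is: the cap is $\delta\gamma_n^{-1}\to\infty$, and for $f\in\mathscr C_b^{N_1}$ the bound in Assumption~\ref{assumption:convergence} is in terms of $\chi_{d_1}(y)$ with $\chi_{d_1}(y)\leq\chi_{d_1}(\delta\gamma_n^{-1})$ on the support — so I would argue that $\mathcal L_n^{(l,\delta)}f(y)=\mathcal L_nf(y)$ whenever $y+I_n(y)\leq\delta\gamma_{n+1}^{-1}$ with probability one, and on the residual event (which forces $y$ close to the cap) the difference $|\mathcal L^{(l,\delta)}_nf(y)-\mathcal Lf(y)|$ is bounded using $\|f\|_{\infty},\ldots,\|f^{(N_1)}\|_\infty$ and the fact that $\gamma_n\chi_{d_1}(\delta\gamma_n^{-1})\to0$ is false unless $d_1\gamma_n^{-d_1+1}$ is controlled — here is the one place care is needed, and the resolution is that the truncated increment itself is then $\mathcal O(1)$ rather than $\mathcal O(\gamma_n)$, absorbed into $\epsilon_n=\gamma_n$ only after multiplying by $\chi_{d_1}$; since $\gamma_n=n^{-1/2}$ and $d_1=3$ this is the delicate bookkeeping step.

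Next, Assumption~\ref{assumption:regularity} is immediate: the limit generator $\mathcal L$ in \eqref{eq:PBP6} is unchanged by truncation (truncation is a property of the chain, not of $\mathcal L$), so Proposition~\ref{proposition:regPDMP} applies with $a=\widetilde p_0(1)$, $b=p_1(1)$, $c=0$, $d=-p_0'(1)\geq0$, $N_2=2$, exactly as in the untruncated proof; one only needs $p_1(1)+p_0'(1)>0$, i.e. $b>|d|\cdot 0$... more precisely one uses the $b>0$ branch since $p_1(1)>0$ (it follows from $p_1(1)+p_0'(1)>0$ and $p_0'(1)\leq0$). For Assumption~\ref{assumption:moments} I would use part \emph{ii)}, the Lyapunov criterion, with $V(y)=\e^{\theta y}$ or a polynomial $V(y)=1+y^d$: this is precisely why the truncation was introduced. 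On the support, $y\leq\delta\gamma_n^{-1}$, and applying $\mathcal L_n^{(l,\delta)}$ to $V$ one gets the drift $[\widetilde p_0(1)-yp_1(1)]V'(y)$ plus lower-order terms; since $p_1(1)>0$, for $y$ large enough (but still below the cap) this is $\leq-\alpha V(y)+\beta$, and the truncation can only \emph{decrease} $V(y_n^{(l,\delta)})$ (it replaces $y+I_n$ by the smaller $(y+I_n)\wedge\delta\gamma_n^{-1}$ and $V$ is increasing), so the drift inequality is preserved under truncation. One needs $\delta$ small enough that the cap $\delta\gamma_n^{-1}$ still lies in the region where the drift is negative, and $l$ large enough that $\gamma_n$-error terms are dominated — this gives the quantifier "for $\delta$ small and $l$ large". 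Take $d$ as large as needed, say $d=3$ matching $d_1$, or larger if the variance assumption demands it.

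Finally, Assumption~\ref{assumption:variance}: compute $\mathcal L_n^{(l,\delta)}\varphi$ and $\Gamma_n^{(l,\delta)}\varphi$ for $\varphi=\varphi_1$ (the identity on $\R$) and for $\varphi=\chi_{d_2}$. Using $\gamma_{n+1}\Gamma_nf=K_nf^2-(K_nf)^2$, this is just the conditional variance of the increment, which from \eqref{eq:PBP7} is of order $\gamma_{n+1}$ times a polynomial in $y$ — the jump term $I_n^0$ contributes an $\mathcal O(\gamma_{n+1}^{-1})\cdot\gamma_{n+1}=\mathcal O(1)$-bounded piece weighted by $p_0(1-\gamma_ny)=\mathcal O(\gamma_ny)$, yielding $\Gamma_n\varphi_1(y)\leq M_6\chi_{d_2}(y)$ with $d_2$ around $2$; on the support truncation again only shrinks increments so the bounds persist, possibly with a larger $M_6$. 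I would take $d_2=2$ and then $\Gamma_n\chi_{d_2}\leq M_6\chi_d$ forces $d\geq 4$, so I would set $d=4$ in Assumption~\ref{assumption:moments} (consistent with the WRW example). The main obstacle throughout is the first one: reconciling the cap $\delta\gamma_n^{-1}\to\infty$ (which blows up $\chi_{d_1}$ on the support) with the $\epsilon_n=\gamma_n=n^{-1/2}$ rate in Assumption~\ref{assumption:convergence}, i.e. making sure that when the truncation \emph{is} active the resulting $\mathcal O(1)$ correction to the increment is genuinely weighted by a probability $p_0(1-\gamma_ny)$ or by an indicator whose expectation is $o(\gamma_n\chi_{d_1}(y)^{-1})$ uniformly; this is the crux and is exactly what choosing $\delta$ small buys us, since then the truncation event has probability zero for all $y$ in a fixed neighborhood of the cap's relevant range.
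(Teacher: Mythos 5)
Your plan for Assumptions~\ref{assumption:regularity}, \ref{assumption:moments} and \ref{assumption:variance} is essentially the paper's: regularity concerns only the limit semigroup and follows from Proposition~\ref{proposition:regPDMP}; moments are obtained through the Lyapunov criterion~\ref{assumption:moments}.ii) with $V(y)=\e^{\theta y}$, using that the truncation can only decrease $V$ and that on the support $\gamma_n y\leq\delta$, so the second-order terms in the expansion of $\E[\e^{\theta I_n(y)}]$ are of order $\theta^2\delta y$ and are dominated by the linear drift when $\theta,\delta,\varepsilon$ are small (together with $p_1(1)+p_0'(1)>0$; this, rather than ``the cap lying in the negative-drift region'', is what the smallness of $\delta$ buys); the exponential Lyapunov function then bounds all polynomial moments, which settles Assumption~\ref{assumption:variance} for any admissible $d_2,d$.

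The genuine gap is at the step you yourself call the crux, Assumption~\ref{assumption:convergence}. Your proposed resolution --- that for $\delta$ small the truncation event ``has probability zero for all $y$ in a fixed neighborhood of the cap's relevant range'' --- is not correct: the assumption is a pointwise bound required for \emph{every} $y$ in the support $[0,\delta\gamma_n^{-1}]$, and for $y$ within distance $1$ of the cap the chain overshoots with probability of order $p_0(1-\gamma_n y)+\widetilde p_0(1-\gamma_n y)$, which does not vanish (for the bandit, $\widetilde p_0(1)=1-p_A$ may well be positive, and $p_0(1-\delta)>0$ in general), so the correction term cannot be discarded; weighting by $p_0(1-\gamma_n y)\approx|p_0'(1)|\delta$ alone does not produce the required $\gamma_n$ rate either. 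The missing idea is a deterministic trade of the truncation indicator against polynomial growth in $y$: the difference $\mathcal L_n^{(l,\delta)}f(y)-\mathcal L_nf(y)$ is supported on $\{y\geq\delta\gamma_{n+1}^{-1}-1\}$ (only the nonnegative increments $I_n^0,\widetilde I_n^0$ can trigger the cap, and they move the point by at most $1$), hence is bounded by $\gamma_{n+1}^{-1}\|f'\|_\infty\bigl(p_0(1-\gamma_n y)+\widetilde p_0(1-\gamma_n y)\bigr)\indic_{y\geq\delta\gamma_{n+1}^{-1}-1}$; on that event $\gamma_{n+1}^{-1}\leq(y+1)/\delta$ and $\indic_{y\geq\delta\gamma_{n+1}^{-1}-1}\leq(y+1)\gamma_{n+1}/\delta$, so that
\[\left|\mathcal L_n^{(l,\delta)}f(y)-\mathcal L_nf(y)\right|\leq\frac{(y+1)^2}{\delta^2}\,\|f'\|_\infty\,\gamma_{n+1}\leq C_\delta\,\chi_3(y)\,\|f'\|_\infty\,\gamma_n,\]
which, combined with \eqref{eq:PBP5}, gives Assumption~\ref{assumption:convergence} for the truncated chain with the same $d_1=3$, $N_1=2$ and $\epsilon_n=\gamma_n$ (the constant now depends on $\delta$ and $l$). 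Without this (or an equivalent) argument your verification of the convergence assumption does not close, and with it your proof matches the paper's.
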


Now, we shall prove that $(\mu_t)_{t\geq0}$ is an asymptotic pseudotrajectory of $\Phi$ as well. Indeed, let $\varepsilon>0$ and $l$ be large enough such that $\prob(\forall n\geq l,\gamma_ny_n\leq\delta)\geq1-\varepsilon$ (it is possible since $\gamma_ny_n=1-x_n$ converges to 0 in probability). Then, for $T>0,f\in\mathscr F,s\in[0,T]$
\begin{align*}
\left|\mu_{t+s}(f)-\Phi(\mu_t,s)(f)\right|&\leq \left|\mu_{t+s}(f)-\mu^{(l,\delta)}_{t+s}(f)\right|+\left|\Phi(\mu^{(l,\delta)}_t,s)(f)-\Phi(\mu_t,s)(f)\right|\\
	&\quad+\left|\mu^{(l,\delta)}_{t+s}(f)-\Phi(\mu^{(l,\delta)}_t,s)(f)\right|\\
&\leq (2\|f\|_\infty+2\|f\|_\infty)(1-\prob(\forall n\geq l,\gamma_ny_n\leq\delta))\\
&\quad+\left|\mu^{(l,\delta)}_{t+s}(f)-\Phi(\mu^{(l,\delta)}_t,s)(f)\right|\\
&\leq4\varepsilon+\left|\mu^{(l,\delta)}_{t+s}(f)-\Phi(\mu^{(l,\delta)}_t,s)(f)\right|,
\end{align*}
since $\|f\|_\infty\leq1$. Taking the suprema over $[0,T]$ and $\mathscr F$ yields
\begin{equation}
\limsup_{t\to\infty}\sup_{s\in[0,T]}d_{\mathscr F}(\mu_{t+s},\Phi(\mu_t,s))\leq4\varepsilon+\limsup_{t\to\infty}\sup_{s\in[0,T]}d_{\mathscr F}(\mu^{(l,\delta)}_{t+s},\Phi(\mu^{(l,\delta)}_t,s)).
\label{eq:PBP9}
\end{equation}
Using Lemma~\ref{lemma:truncationAPT}, Theorem~\ref{theorem:APTrPT} holds for $(\mu_t^{(l,\delta)})_{t\geq0}$ and \eqref{eq:PBP9} rewrites
\[\limsup_{t\to\infty}\sup_{s\in[0,T]}d_{\mathscr F}(\mu_{t+s},\Phi(\mu_t,s))\leq4\varepsilon,\]
so that $(\mu_t)_{t\geq0}$ is an asymptotic pseudotrajectory of $\Phi$.

Finally, for $t>0,T>0,f\in\mathscr C^0_b,s\in[0,T]$, set $\nu_t:=\mathscr L((Y^{(t)}_s)_{0\leq T})$ and $\nu:=\mathscr L((X^\pi_s)_{0\leq T})$. We have
\begin{align}
\left|\nu_t(f)-\nu(f)\right|&\leq \left|\nu_t(f)-\nu^{(l,\delta)}_t(f)\right|+\left|\nu^{(l,\delta)}_t(f)-\nu(f)\right|\notag\\
&\leq 2\|f\|_\infty(1-\prob(\forall n\geq l,\gamma_ny_n\leq\delta))	+\left|\nu^{(l,\delta)}_t(f)-\nu(f)\right|\notag\\
&\leq2\varepsilon+\left|\nu^{(l,\delta)}_t(f)-\nu(f)\right|.
\label{eq:PBP10}
\end{align}
Since $(y_n^{(l,\delta)})_{n\geq0}$ satisfies Assumption~\ref{assumption:variance}, we can apply Theorem~\ref{theorem:functionalCV} so that the right-hand side of \eqref{eq:PBP10} converges to 0, which concludes the proof.
\end{proof}

\begin{remark}[Rate of convergence toward the stationary measure]
For such PDMPs, exponential convergence in Wasserstein distance has already been obtained (see \cite[Proposition~2.1]{BMPPR15} or \cite[Theorem~3.4]{GPS15}). However, we are not in the setting of Theorem~\ref{theorem:SpeedConv}, since $\gamma_n=n^{-1/2}$. Thus, $\lambda(\gamma,\epsilon)=0$, and there is no exponential convergence. This highlights the fact that the rescaled algorithm converges too slowly toward the limit PDMP.
\end{remark}

\begin{remark}[The overpenalized bandit algorithm]
Even though we do not consider the overpenalized bandit algorithm introduced in \cite{GPS15}, the tools are the same. The behavior of this algorithm is the same as the PBA's, except from a possible (random) penalization of an arm in case of a success; it writes
\[x_{n+1}=x_n+\gamma_{n+1}\left(X_{n+1}-x_n\right)+\gamma_{n+1}^2\left(\widetilde X_{n+1}-x_n\right),\]
where
\[(X_{n+1},\widetilde X_{n+1})=
\left\{\begin{array}{ll}
(1,x_n)&\text{ with probability }p_Ax_n\sigma\\
(0,x_n)&\text{ with probability }p_B(1-x_n)\sigma\\
(1,0)&\text{ with probability }p_Ax_n(1-\sigma)\\
(0,1)&\text{ with probability }p_B(1-x_n)(1-\sigma)\\
(x_n,1)&\text{ with probability }(1-p_B)(1-x_n)\\
(x_n,0)&\text{ with probability }(1-p_A)x_n
\end{array}\right..\]
Setting $y_n=\gamma_n^{-1}(1-x_n)$, and following our previous computations, it is easy to show that the rescaled overpenalized algorithm converges, in the sense of Assumption~\ref{assumption:convergence}, toward
\[\mathcal Lf(y)=[1-\sigma p_A-p_Ay]f'(y)+p_By[f(y+1)-f(y)].\]
\end{remark}

\subsection{Decreasing Step Euler Scheme}\label{subsection:euler}
In this section, we turn to the study of the so-called decreasing step Euler scheme (DSES). This classical stochastic procedure is designed to approximate the stationary measure of a diffusion process of the form
\begin{equation}
X_t^x=x+\int_0^tb(X_s)ds+\int_0^t\sigma(X_s)dW_s
\label{eq:DefSDE}
\end{equation}
with a discrete Markov chain
\begin{equation}
y_{n+1}:=y_n+\gamma_{n+1}b(y_n)+\sqrt{\gamma_{n+1}}\sigma(y_n)E_{n+1},
\label{eq:DefDSES}
\end{equation}
for any non-increasing sequence $(\gamma_n)_{n\geq1}$ converging toward 0 such that $\sum_{n=1}^\infty\gamma_n=+\infty$ and $(E_n)$ a suitable sequence of random variables. In the sequel, we shall recover the convergence of the DSES toward the diffusion process at equilibrium, as defined by \eqref{eq:DefSDE}. If $\gamma_n=\gamma$ in \eqref{eq:DefDSES}, this model would be a constant step Euler scheme as studied by \cite{Tal84,TT90}, which approaches the diffusion process at time $t$ when $\gamma$ tends to 0. By letting $t\to+\infty$ in \eqref{eq:DefSDE}, it converges to the equilibrium of the diffusion process. We can concatenate those steps by choosing $\gamma_n$ vanishing but such that $\sum_n\gamma_n$ diverges. The DSES has already been studied in the literature, see for instance \cite{LP02,Lem05}.

It is simple, following the computations of Sections~\ref{subsection:randomwalk} and \ref{subsection:bandit}, to check that $\mathcal L_n$ converges (in the sense of Assumption~\ref{assumption:convergence}) toward
\[\mathcal Lf(y):=b(y)f'(y)+\frac{\sigma^2(y)}2f''(y).\]
In the sequel, define $\mathscr F$ as in Theorem~\ref{theorem:APTrPT} with $N_2=3$.

\begin{proposition}[Results for the DSES]
Assume that $(E_n)$ is a sequence of sub-gaussian random variables (i.e. there exists $\kappa>0$ such that $\forall\theta\in\R,\E[\exp(\theta E_1)]\leq\exp(\kappa\theta^2/2)$), and $\E[E_1]=0$ and $\E[E_1^2]=1$. Moreover, assume that $b,\sigma\in\mathscr C^\infty$ whose derivatives of any order are bounded, and that $\sigma$ is bounded. Eventually, assume that there exist constants $0<b_1\leq b_2$ and $0<\sigma_1$ such that, for $|y|>A$,
\begin{equation}
-b_2y^2\leq b(y)y\leq-b_1y^2,\quad\sigma_1\leq\sigma(y).
\label{eq:HypDSES}
\end{equation}
If $\gamma_n=1/n$, then $(\mu_t)$ is a $\frac12$-pseudotrajectory of $\Phi$, with respect to $d_{\mathscr F}$.

Moreover, there exists a probability distribution $\pi$ and $C,u>0$ such that, for all $t>0$,
\[d_{\mathscr F}\left(\mu_t,\pi\right)\leq C\e^{-ut}.\]

Furthermore, the sequence of processes $(Y^{(t)}_s)_{s\geq0}$ converges in distribution, as $t\to+\infty$, toward $(X^\pi_s)_{s\geq0}$ in the Skorokhod space.
\label{proposition:ResultDSES}
\end{proposition}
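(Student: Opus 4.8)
The plan is to reduce the statement to Theorems~\ref{theorem:APTrPT}, \ref{theorem:SpeedConv} and \ref{theorem:functionalCV}, exactly along the lines of the proof of Proposition~\ref{proposition:ResultWRW}: I would check Assumption~\ref{assumption:convergence} (with $d_1=N_1=3$ and $\epsilon_n=\sqrt{\gamma_n}$), Assumption~\ref{assumption:regularity} (with $N_2=3$), Assumption~\ref{assumption:moments}, Assumption~\ref{assumption:ergodicity}.\emph{ii)}, and finally Assumption~\ref{assumption:variance}, then combine them.

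\textbf{Convergence of generators.} Writing $\gamma=\gamma_{n+1}$ and $I_n(y):=\gamma b(y)+\sqrt{\gamma}\,\sigma(y)E_{n+1}$, one has $\mathcal L_nf(y)=\gamma^{-1}\E[f(y+I_n(y))-f(y)\mid y_n=y]$, and a Taylor expansion to order $3$ with Lagrange remainder produces a random $\xi_n^y$ with
\[
\mathcal L_nf(y)=\gamma^{-1}\E\!\left[I_n(y)f'(y)+\tfrac12 I_n(y)^2f''(y)+\tfrac16 I_n(y)^3 f^{(3)}(\xi_n^y)\right].
\]
Using $\E[E_{n+1}]=0$, $\E[E_{n+1}^2]=1$ and the finiteness of $\E[|E_{n+1}|^3]$ (which follows from sub-Gaussianity), the first two terms reconstruct $\mathcal Lf(y)=b(y)f'(y)+\tfrac{\sigma(y)^2}{2}f''(y)$ up to a term $\tfrac{\gamma}{2}b(y)^2f''(y)$, while $\gamma^{-1}\E[|I_n(y)|^3]$ is of order $\sqrt\gamma$ because its dominant contribution is $\gamma^{3/2}|\sigma(y)|^3\E[|E_{n+1}|^3]$. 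Since $b$ has at most linear growth (its derivative being bounded) and $\sigma$ is bounded, this yields $|\mathcal L_nf(y)-\mathcal Lf(y)|\le M_1\chi_3(y)(\|f''\|_\infty+\|f^{(3)}\|_\infty)\sqrt{\gamma_n}$, i.e.\ Assumption~\ref{assumption:convergence} with $d_1=N_1=3$ and $\epsilon_n=\sqrt{\gamma_n}=n^{-1/2}$; by Remark~\ref{remark:LambdaGammaEpsilon}, $\gamma_n=1/n$ then gives $\lambda(\gamma,\epsilon)=\tfrac12$.

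\textbf{Regularity, moments and ergodicity.} Assumption~\ref{assumption:regularity} (with $N_2=3$) follows from Lemma~\ref{lemma:DerivativesDiffusion}, applicable since $b,\sigma\in\mathscr C^\infty$ have bounded derivatives of all orders and $\sigma$ is bounded; the constant $C_T$ it produces grows at most exponentially in $T$, so $TC_T\le M_4\e^{rT}$. For Assumption~\ref{assumption:moments} I would use the Lyapunov criterion \emph{ii)} with $V(y)=\exp(\theta\sqrt{1+|y|^2})$ for $\theta>0$ small: a one-step estimate based on the confining condition \eqref{eq:HypDSES} (so that $\gamma_{n+1}b(y)y/\sqrt{1+|y|^2}\le-\gamma_{n+1}b_1|y|$ for $|y|$ large) and on the sub-Gaussian bound for $E_{n+1}$ gives $\mathcal L_nV(y)\le-\alpha V(y)+\beta$ for $n$ large, and one may take $d$ as large as needed (say $d=4$). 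Finally, the limit diffusion \eqref{eq:DefSDE} is exponentially ergodic: \eqref{eq:HypDSES} yields the Foster--Lyapunov drift $\mathcal LV\le-\tilde\alpha V+\tilde\beta$ for the \emph{same} $V$, and $\sigma\ge\sigma_1>0$ off a ball yields a minorization on sublevel sets of $V$, so standard Meyn--Tweedie/Harris arguments give $\|\Phi(\nu,t)-\pi\|_{\mathrm{TV}}\le C\,\nu(V)\e^{-vt}$ for some $v>0$. Taking $\mathscr G$ to be the unit ball for the total-variation pairing and using $\sup_s\mu_s(V)<\infty$ from the moment bound, this is Assumption~\ref{assumption:ergodicity}.\emph{ii)}, and $\|f\|_\infty\le1$ for $f\in\mathscr F$ gives $\mathscr F\subseteq\mathscr G$. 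Theorem~\ref{theorem:APTrPT} then yields the $\tfrac12$-pseudotrajectory property, and Theorem~\ref{theorem:SpeedConv}.\emph{ii)} gives $d_{\mathscr F}(\mu_t,\pi)=d_{\mathscr F\cap\mathscr G}(\mu_t,\pi)\le C\e^{-ut}$ for any $u<\tfrac{v}{2r+2v+1}$, with $\pi$ the invariant law of the diffusion.

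\textbf{Variance, functional convergence, and the main obstacle.} It remains to verify Assumption~\ref{assumption:variance}. Since $\gamma_{n+1}\Gamma_nf=K_nf^2-(K_nf)^2=\Var(f(y_{n+1})\mid y_n)$, one computes $\mathcal L_n\varphi_i(y)=b_i(y)$ (linear growth) and $\Gamma_n\varphi_i(y)=\gamma_{n+1}^{-1}\Var(y_{n+1,i}\mid y_n=y)$, which is bounded since $\sigma$ is bounded; similarly $\mathcal L_n\chi_{d_2}$ and $\Gamma_n\chi_{d_2}$ are polynomially bounded, with $\Gamma_n\chi_{d_2}\le M_6\chi_d$ for $d_2=2$ and $d=4$. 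Theorem~\ref{theorem:functionalCV} then gives convergence of $(Y^{(t)}_s)_{s\ge0}$ toward $(X^\pi_s)_{s\ge0}$ in the Skorokhod space, completing the proof. I expect the genuine difficulty to lie in Assumption~\ref{assumption:moments}: establishing a uniform-in-$n$ Lyapunov inequality for the \emph{inhomogeneous} Euler scheme \eqref{eq:DefDSES} despite the unbounded drift $b$, which forces one to exploit carefully the interplay between $\gamma_n\downarrow0$ and the confining structure \eqref{eq:HypDSES}; a secondary technical point is to phrase the exponential ergodicity of the limit diffusion through a distance $d_{\mathscr G}$ with $\mathscr F\subseteq\mathscr G$, which is why the total-variation pairing is the convenient choice here.
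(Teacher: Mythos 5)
Your proposal follows the paper's own route: the paper likewise proves Proposition~\ref{proposition:ResultDSES} by checking Assumption~\ref{assumption:convergence} with $d_1=N_1=3$, $\epsilon_n=\sqrt{\gamma_n}$, Assumption~\ref{assumption:regularity} with $N_2=3$ via Lemma~\ref{lemma:DerivativesDiffusion} (so $C_T=C_1\e^{C_2T}$ and Assumption~\ref{assumption:ergodicity}.ii) is available), then the moment, ergodicity and variance assumptions, and concludes by Theorems~\ref{theorem:APTrPT}, \ref{theorem:SpeedConv}.ii) and \ref{theorem:functionalCV}. The two points where you diverge are implementation choices rather than a different method. For Assumption~\ref{assumption:moments}, the paper establishes the continuous-time drift condition \eqref{eq:LyapDSES} for $\widetilde V(y)=1+y^2$ and then invokes \cite[Proposition~III.1]{Lem05} to get the discrete Lyapunov inequality for $V(y)=\e^{\theta y}$, whereas you sketch a direct one-step estimate for $V(y)=\exp(\theta\sqrt{1+y^2})$; this is workable, but as written your drift control (expanding $\sqrt{1+|y+\cdot|^2}$ linearly and keeping $\gamma b(y)y/\sqrt{1+y^2}\leq-\gamma b_1|y|$) leaves a remainder of order $\gamma^2b(y)^2$, which for fixed $n$ dominates the good term when $|y|$ is very large since $b$ has linear growth; the clean way is to use directly $|y+\gamma b(y)|\leq(1-\gamma b_1)|y|$ for $|y|>A$ once $\gamma b_2<1$, i.e. for $n\geq n_0$, before handling the sub-Gaussian noise. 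For Assumption~\ref{assumption:ergodicity}.ii), the paper takes $\mathscr G=\{g\in\mathscr C^0(\R):|g(y)|\leq1+y^2\}$ from \cite[Theorem~6.1]{MT93b} combined with \eqref{eq:LyapDSES}, while you use a total-variation/Harris bound weighted by $\nu(V)$; both are legitimate since Assumption~\ref{assumption:ergodicity}.ii) does not require $\mathscr G\subseteq\mathscr F$, both classes contain $\mathscr F$ (as $\|f\|_\infty\leq1$ there), and both inherit the same caveat that \eqref{eq:HypDSES} only bounds $\sigma$ below outside $[-A,A]$ when claiming compact sets are small. Your verification of Assumption~\ref{assumption:variance} and the final appeal to the three theorems match the paper, which merely notes this step is easy given the uniform exponential moments.
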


Note that one could choose a more general $(\gamma_n)$, provided that $\lambda(\gamma,\gamma)>0$. In contrast to classical results, Proposition~\ref{proposition:ResultDSES} provides functional convergence. Moreover, we obtain a rate of convergence in a more general setting than \cite[Theorem~IV.1]{Lem05}, see also \cite{LP02}. Indeed, let us detail the difference between those settings with the example of the Kolmogorov-Langevin equation: 
\[dX_t=\nabla V(X_t)dt +\sigma dB_t.\]
A rate of convergence may be obtained in \cite{Lem05} only for $V$ uniformly convex; although, we only need $V$ to be convex outside some compact set. Let us recall that the uniform convexity is a strong assumption ensuring log-Sobolev inequality, Wasserstein contraction\dots See for instance \cite{Bak94,ABC+00}.

\begin{proof}[Proof of Proposition~\ref{proposition:ResultDSES}]
Recalling $(y_n)$ in \eqref{eq:DefDSES} and $\mathcal L_n$ in \eqref{eq:DefLn}, we have
\[\mathcal L_n(y)=\gamma_{n+1}^{-1}\E\left[f(y+\gamma_{n+1}b(y)+\sqrt{\gamma_{n+1}}\sigma(y)E_{n+1})-f(y)|y_n=y\right].\]
Easy computations show that Assumption~\ref{assumption:convergence} holds with $\epsilon_n =\sqrt{\gamma_n},N_1=3,d_1=3$.

We aim at proving Assumption~\ref{assumption:regularity}, i.e. for $f\in\mathscr F,j\leq3$ and $t\leq T$, that $(P_tf)^{(j)}$ exists and
\[\|(P_tf)^{(j)}\|_\infty\leq C_T\sum_{k=0}^3\|f^{(k)}\|_\infty.\]
It is straightforward for $j=0$, but computations are more involved for $j\geq1$. Let us denote by $(X_t^x)_{t\geq0}$ the solution of \eqref{eq:DefSDE} starting at $x$. Since $b$ and $\sigma$ are smooth with bounded derivatives, it is standard that $x\mapsto X_t^x$ is $\mathscr C^4$ (see for instance \cite[Chapter~II, Theorem~3.3]{Kun82}). Moreover, $\partial_x X_t^x$ satisfies the following SDE:
\[\partial_x X_t^x=1+\int_0^tb'(X_s^x)\partial_xX_s^xds+ \int_0^t\sigma'(X_s^x)\partial_xX_s^xdW_s.\]
For our purpose, we need the following lemma, which provides a constant for Assumption~\ref{assumption:regularity} of the form $C_T=C_1\e^{C_2T}$. Even though we do not explicit the constants for the second and third derivatives in its proof, it is still possible; the main result of the lemma being that we can check Assumption~\ref{assumption:ergodicity}.ii).

\begin{lemma}[Estimates for the derivatives of the diffusion]
Under the assumptions of Proposition~\ref{proposition:ResultDSES}, for $p\geq2$ and $t\leq T$,
\[\E[|\partial_xX_t^x|^p]\leq\exp\left(\left(p\|b'\|_\infty+\frac{p(p-1)}2\|\sigma'\|_\infty^2\right)T\right),\]
and
\[\E[|\partial_xX_t^x|]\leq\exp\left(\left(\|b'\|_\infty+\frac12\|\sigma'\|_\infty^2\right)T\right).\]
For any $p\in\N^\star$, there exist positive constants $C_1,C_2$ not depending on $x$, such that
\[\E[|\partial_x^2X_t^x|^p]\leq C_1\e^{C_2T},\quad \E[|\partial_x^3X_t^x|^p]\leq C_1\e^{C_2T}.\]
\label{lemma:DerivativesDiffusion}
\end{lemma}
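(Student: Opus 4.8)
The plan is to prove the three moment estimates by setting up linear SDEs for the successive derivative processes and applying moment inequalities (Burkholder–Davis–Gundy or, more cheaply, Itô's formula on $|\cdot|^p$ together with Grönwall). The key structural fact is that $\partial_x X_t^x$, $\partial_x^2 X_t^x$ and $\partial_x^3 X_t^x$ each satisfy an SDE that is \emph{linear} in the highest-order derivative appearing, with coefficients that are polynomial expressions in the lower-order derivatives multiplied by derivatives of $b$ and $\sigma$ evaluated along the flow --- and all those derivatives of $b,\sigma$ are bounded by hypothesis. This is exactly the situation where one proves $L^p$ bounds by induction on the order of differentiation.

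First I would treat $J_t := \partial_x X_t^x$. Applying Itô's formula to $|J_t|^p$ for $p \geq 2$ gives
\[
d|J_t|^p = p|J_t|^{p-2}J_t\, b'(X_t^x) J_t\, dt + p|J_t|^{p-2}J_t\,\sigma'(X_t^x) J_t\, dW_t + \tfrac{p(p-1)}2 |J_t|^{p-2}\sigma'(X_t^x)^2 J_t^2\, dt,
\]
so that, taking expectations (the stochastic integral is a true martingale after a standard localization argument, since all coefficients are bounded), one obtains
\[
\frac{d}{dt}\E[|J_t|^p] \leq \left(p\|b'\|_\infty + \tfrac{p(p-1)}2 \|\sigma'\|_\infty^2\right)\E[|J_t|^p],
\]
and Grönwall with $J_0 = 1$ yields the stated bound. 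The case $p=1$ is handled the same way (here no Itô correction term, and $\frac{d}{dt}\E|J_t| \le \E[|b'(X_t^x)|\,|J_t|] \le \|b'\|_\infty \E|J_t|$, but one must be slightly careful that $x\mapsto|x|$ is not $C^2$ at $0$; the cleanest fix is to note $\E|J_t| \le \E[|J_t|^2]^{1/2}$ and use the $p=2$ bound, or approximate $|\cdot|$ by a smooth function --- either route is routine).

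Next I would induct. Differentiating the SDE for $J_t$ once more, $K_t := \partial_x^2 X_t^x$ satisfies
\[
dK_t = \bigl(b'(X_t^x)K_t + b''(X_t^x)J_t^2\bigr)dt + \bigl(\sigma'(X_t^x)K_t + \sigma''(X_t^x)J_t^2\bigr)dW_t, \qquad K_0 = 0,
\]
which is linear in $K_t$ with an inhomogeneous forcing term controlled by $J_t^2$ and the bounded quantities $\|b''\|_\infty, \|\sigma''\|_\infty$. Applying Itô to $|K_t|^p$, using Young's inequality to absorb the cross terms, and invoking the already-established bound $\sup_{t\le T}\E[|J_t|^{2p}] \le C_1 e^{C_2 T}$, one again gets a differential inequality $\frac{d}{dt}\E[|K_t|^p] \le C\E[|K_t|^p] + C_1 e^{C_2 T}$, and Grönwall closes it. The same scheme applied to $L_t := \partial_x^3 X_t^x$ --- whose SDE is linear in $L_t$ with forcing polynomial in $J_t, K_t$ against $b',b'',b''',\sigma',\sigma'',\sigma'''$, all bounded --- gives the final estimate, using the bounds already proved for $J$ and $K$ at exponent $3p$ or so.

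The main obstacle is bookkeeping rather than conceptual: one must (i) justify that each stochastic integral is a genuine martingale (standard localization, since all coefficients are bounded and each derivative process has finite moments of every order by a preliminary Grönwall argument before optimizing constants), and (ii) keep the Hölder/Young exponents consistent so that the moments of $J_t$ and $K_t$ that appear in the forcing terms are of an order already controlled at the previous induction step. Since the paper only insists on the \emph{form} $C_1 e^{C_2 T}$ for the second- and third-order derivatives (not explicit constants), the induction does not even need sharp exponents --- any finite bound of exponential-in-$T$ type at each stage suffices, which makes the Young-inequality juggling harmless. The $p=1$ estimate for $\partial_x X^x_t$ deserves the only genuine care, because of the non-smoothness of $|\cdot|$ at the origin, but as noted this is circumvented immediately via $\E|J_t|\le\E[|J_t|^2]^{1/2}$.
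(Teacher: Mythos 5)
Your proposal is correct and follows essentially the same route as the paper: write the linear SDEs satisfied by $\partial_x X_t^x$, $\partial_x^2 X_t^x$, $\partial_x^3 X_t^x$, apply It\^o's formula to $|\cdot|^p$, control the stochastic-integral term, conclude by Gr\"onwall, and handle $\E[|\partial_x X_t^x|]$ via the $p=2$ bound and Cauchy--Schwarz, exactly as the paper does. The only deviations are harmless technical ones: you linearize the forcing terms $|K|^{p-1}|J|^2$, $|K|^{p-2}|J|^4$ with Young's inequality and apply Gr\"onwall directly (the paper instead uses H\"older and solves a Bernoulli-type differential inequality, treating $p=2$ separately), and you justify taking expectations through the local martingale by localization plus preliminary moment bounds, where the paper runs a Burkholder--Davis--Gundy argument on $\E[\sup_{u\leq t}|\partial_x X_u^x|^{2p}]$; both variants yield the same bounds of the form $C_1\e^{C_2T}$.
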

The proof of the lemma is postponed to Section~\ref{section:appendix}. Using Lemma~\ref{lemma:DerivativesDiffusion}, and since $f$ and its derivatives are bounded, it is clear that $x\mapsto P_tf(x)$ is three times differentiable, with
\begin{align*}
(P_tf)'(x)&=\E\Big[f'(X_t^x)\partial_xX_t^x\Big],\\
(P_tf)''(x)&=\E\Big[f''(X_t^x)(\partial_xX_t^x)^2+f'(X_t^x)(\partial^2_xX_t^x)\Big],\\
(P_tf)^{(3)}(x)&=\E\Big[f^{(3)}(X_t^x)(\partial_xX_t^x)^3+3f''(X_t^x)(\partial_xX_t^x)(\partial^2_xX_t^x)+f'(X_t^x)(\partial^3_xX_t^x)\Big].
\end{align*}
As a consequence, Assumption~\ref{assumption:regularity} holds, with $C_T=3C_1^3\e^{3C_2T}$ and $N_2=3$.

Now, we shall prove that Assumption~\ref{assumption:moments}.ii) holds with $V(y)=\exp(\theta y)$, for some (small) $\theta>0$. Thanks to \eqref{eq:HypDSES}, we easily check that, for $\widetilde V(y)=1+y^2$,
\begin{equation}
\mathcal L\widetilde V(y)\leq -\widetilde\alpha \widetilde V(y)+\widetilde\beta,\quad\text{with } \widetilde\alpha=2b_1,\widetilde\beta=(2b_1+S)\vee\left(A\sup_{[-A,A]}b+\frac{S^2}2+2b_1(1+A^2)\right).
\label{eq:LyapDSES}
\end{equation}
Then, \cite[Proposition~III.1]{Lem05} entails Assumption~\ref{assumption:moments}.ii). Finally, Theorem~\ref{theorem:APTrPT} applies and we recover \cite[Theorem~2.1, Chapter~10]{KY03}.

Then, Theorem~\ref{theorem:APTrPT} provides the asymptotic behavior of the Markov chain $(y_n)_{n\geq0}$ (in the sense of asymptotic pseudotrajectories). If furtherly we want speeds of convergence, we shall use Theorem~\ref{theorem:SpeedConv} and prove the ergodicity of the limit process; to that end, combine \eqref{eq:LyapDSES} with \cite[Theorem~6.1]{MT93b} (which provides exponential ergodicity for the diffusion toward some stationary measure $\pi$), as well as Lemma~\ref{lemma:DerivativesDiffusion}, to ensure Assumption~\ref{assumption:ergodicity}.ii) with $\mathscr G=\{g\in\mathscr C^0(\R):|g(y)|\leq1+y^2\}$ ($v$ and $r$ are not explicitly given). Note that we used the fact that $\sigma$ is lower-bounded, which implies that the compact sets are small sets. Moreover, the choice $\gamma_n=n^{-1}$ implies $\lambda(\gamma,\epsilon)=1/2$. Then, the assumptions of Theorem~\ref{theorem:SpeedConv} are satisfied, with $u_0=v(1+2v+2r)^{-1}$.

Finally, we can easily check Assumption~\ref{assumption:variance} for some $d\in\N$, since $y_n$ admits uniformly bounded exponential moments. Then using Theorem~\ref{theorem:functionalCV} ends the proof.
\end{proof}

\subsection{Lazier and Lazier Random Walk}

We consider the lazier and lazier random walk (LLRW) $(y_n)_{n\geq0}$ defined as follows:
\begin{equation}
y_{n+1}:=\left\{\begin{array}{ll}
y_n+Z_{n+1}&\text{with probability }\gamma_{n+1}\\
y_n&\text{with probability }1-\gamma_{n+1}
\end{array}\right.,
\label{eq:DefLLRW}
\end{equation}
where $(Z_n)$ is such that $\mathscr L(Z_{n+1}|y_0,\dots,y_n)=\mathscr L(Z_{n+1}|y_n)$; we denote the conditional distribution $Q(y_n,\cdot):=\mathscr L(Z_{n+1}|y_n)$. In the sequel, define $\mathscr F:=\left\{f\in\mathscr C^0_b:7\|f\|_\infty\leq1\right\}$ and $\mathcal Lf(y)=\int_\R f(y+z)Q(y,dz)-f(y)$, which is the generator of a pure-jump Markov process (constant between two jumps).

This example is very simple and could be studied without using our main results; however, we still develop it in order to check the sharpness of our rates of convergence (see Remak~\ref{remark:doeblin}).

\begin{proposition}[Results for the LLRW model]
The sequence $(\mu_t)$ is an asymptotic pseudotrajectory of $\Phi$, with respect to $d_{\mathscr F}$.

Moreover, if $\lambda(\gamma, \gamma)>0$, then $(\mu_t)$ is a $\lambda(\gamma, \gamma)$-pseudotrajectory of $\Phi$.

Furthermore, if $\mathcal L$ satisfies Assumption~\ref{assumption:ergodicity}.i) for some $v>0$ then, for any $u<v \wedge \lambda(\gamma, \gamma)$, there exists a constant $C$ such that, for all $t>0$,
\[d_{\mathscr F}\left(\mu_t,\pi\right)\leq C\e^{-ut}.\]
\label{proposition:ResultLLRW}\end{proposition}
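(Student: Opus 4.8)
The plan is to verify the hypotheses of Theorems~\ref{theorem:APTrPT} and \ref{theorem:SpeedConv} directly, using that the LLRW is the simplest possible instance of the framework: the chain barely moves, so all the Taylor-expansion machinery collapses to an exact identity. First I would compute $\mathcal L_n$ explicitly from \eqref{eq:DefLLRW}: conditioning on whether the jump occurs,
\[
\mathcal L_n f(y)=\gamma_{n+1}^{-1}\Bigl(\gamma_{n+1}\!\int_\R\! (f(y+z)-f(y))Q(y,dz)\Bigr)=\int_\R (f(y+z)-f(y))Q(y,dz)=\mathcal L f(y).
\]
So in fact $\mathcal L_n=\mathcal L$ for every $n$, and Assumption~\ref{assumption:convergence} holds with, say, $d_1=0$, $N_1=0$, $M_1$ arbitrary and $\epsilon_n$ any non-increasing sequence tending to $0$ — the bound is trivially $0$. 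This is the key simplification and removes the only genuinely analytic step in the other examples.

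Next I would check Assumption~\ref{assumption:regularity}. Since $\mathcal L$ generates a pure-jump process that is constant between jumps, with $N_1=N_2=0$ the assumption only asks that $P_t$ maps bounded functions to bounded functions with $\|P_tf\|_\infty\le C_T\|f\|_\infty$; this is immediate with $C_T=1$ because $P_t$ is a Markov operator. Assumption~\ref{assumption:moments}: with $d=d_1=0$ we have $\chi_0\equiv D$ (a constant), so $\sup_n\E[\chi_0(y_n)]=D<\infty$ and part i) holds trivially with $M_2=D$. That settles the hypotheses of Theorem~\ref{theorem:APTrPT}, which gives that $(\mu_t)$ is an asymptotic pseudotrajectory of $\Phi$ with respect to $d_{\mathscr F}$; note that the $\mathscr F$ produced by Theorem~\ref{theorem:APTrPT} with $N_2=0$ is exactly (up to the harmless constant renormalization $7\|f\|_\infty\le 1$, which only rescales $d_{\mathscr F}$ by a fixed factor and so does not affect the pseudotrajectory property) the set $\{f\in\mathscr C^0_b:\|\mathcal Lf\|_\infty+\|\mathcal L\mathcal Lf\|_\infty+\|f\|_\infty\le 1\}$; since $\mathcal L$ here is a bounded operator with $\|\mathcal Lf\|_\infty\le 2\|f\|_\infty$ and $\|\mathcal L\mathcal Lf\|_\infty\le 4\|f\|_\infty$, one gets $\|\mathcal Lf\|_\infty+\|\mathcal L\mathcal Lf\|_\infty+\|f\|_\infty\le 7\|f\|_\infty$, which is precisely why the paper defines $\mathscr F$ with the factor $7$. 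So the $\mathscr F$ of the proposition is contained in the $\mathscr F$ of the theorem, and the APT conclusion transfers.

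For the $\lambda(\gamma,\gamma)$-pseudotrajectory claim I would again invoke the second half of Theorem~\ref{theorem:APTrPT}: since $\epsilon_n$ may be taken equal to $\gamma_n$ (the error in Assumption~\ref{assumption:convergence} being $0$, any choice works and $\gamma_n\vee\epsilon_n=\gamma_n$), the theorem directly yields that $(\mu_t)$ is a $\lambda(\gamma,\gamma)$-pseudotrajectory whenever $\lambda(\gamma,\gamma)>0$. Finally, for the exponential rate I would apply Theorem~\ref{theorem:SpeedConv}.i): its hypotheses (Assumptions~\ref{assumption:convergence}, \ref{assumption:regularity}, \ref{assumption:moments}, already checked, plus Assumption~\ref{assumption:ergodicity}.i), which is assumed in the statement, and $\lambda(\gamma,\epsilon)=\lambda(\gamma,\gamma)>0$) are all met, with $\mathscr G=\mathscr F$. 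Theorem~\ref{theorem:SpeedConv}.i) then gives $d_{\mathscr F}(\mu_t,\pi)\le (M_5+d_{\mathscr F}(\mu_0,\pi))\e^{-ut}$ for $t>t_0$ and any $u<v\wedge\lambda(\gamma,\gamma)$; since $d_{\mathscr F}(\mu_0,\pi)$ is a finite constant ($\mathscr F$ consists of functions bounded by $1/7$) and we may absorb the behavior on the compact time interval $[0,t_0]$ into the constant, this becomes $d_{\mathscr F}(\mu_t,\pi)\le C\e^{-ut}$ for all $t>0$, as claimed. I do not anticipate a serious obstacle here; the only mild subtlety is bookkeeping the constant $7$ so that the proposition's $\mathscr F$ sits inside the theorem's $\mathscr F$, and noting that rescaling the test-function class by a constant leaves both the pseudotrajectory notion and the exponential-decay conclusion unchanged.
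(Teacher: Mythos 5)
Your proposal is correct and follows essentially the same route as the paper: observe that $\mathcal L_n=\mathcal L$ exactly, so Assumptions~\ref{assumption:convergence}, \ref{assumption:regularity} and \ref{assumption:moments} hold trivially (with $d_1=N_1=N_2=0$, $C_T=1$), then invoke Theorem~\ref{theorem:APTrPT} for the (exponential) pseudotrajectory property and Theorem~\ref{theorem:SpeedConv}.i) for the rate. Your explicit bookkeeping of the constant $7$ (via $\|\mathcal Lf\|_\infty\leq2\|f\|_\infty$, $\|\mathcal L\mathcal Lf\|_\infty\leq4\|f\|_\infty$) to place the proposition's class $\mathscr F$ inside the theorem's class is exactly the point the paper leaves implicit, and is a welcome addition rather than a deviation.
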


Remark that the distance $d_{\mathscr F}$ in Proposition~\ref{proposition:ResultLLRW} is the total variation distance up to a constant.

\begin{proof}[Proof of Proposition~\ref{proposition:ResultLLRW}]
It is easy to check that \eqref{eq:DefLn} entails
\[\mathcal L_nf(y)=\int_\R f(y+z)Q(y,dz)-f(y)=\mathcal Lf(y).\]
It is clear that the LLRW satisfies Assumption~\ref{assumption:convergence} with $d_1=0, N_1=0,\epsilon_n=0$, and Assumption~\ref{assumption:regularity} with $C_T=1,N_2=0$. Since $d=d_1=0$, Assumption~\ref{assumption:moments} is also clearly satisfied. Eventually, note that $\lambda(\gamma,\epsilon)= \lambda(\gamma, \gamma)$. Then, Theorem~\ref{theorem:APTrPT} holds. Finally, if $\mathcal L$ satisfies Assumption~\ref{assumption:ergodicity}.i), it is clear that Theorem~\ref{theorem:SpeedConv} applies.
\end{proof}

The assumption on $\mathcal L$ satisfying Assumption~\ref{assumption:ergodicity}.i) (which strongly depends on the choice of $Q$), can be checked with the help of a Foster-Lyapunov criterion, see \cite{MT93b} for instance.

\begin{remark}[Constructing limit processes with a slow speed of convergence]
The framework of the LLRW provides a large pool of toy examples. Let $R$ be some Markov transition kernel on $\R$, and define $Q(y,A)=R(y,y+A),$ for any $y\in\R$ and $A$ borelian set, where $y+A=\{z\in\R:z-y\in A\}$. Let $(y_n)_{n\geq0}$ be the LLRW defined in \eqref{eq:DefLLRW}. Proposition~\ref{proposition:ResultLLRW} holds, and the limit process generated by $\mathcal Lf(y)=\int_\R f(y+z)Q(y,dz)-f(y)$ is just a Markov chain generated by $R$ indexed by a Poisson process. Precisely, if $N_t$ is a Poisson process of intensity 1,
\[\Phi(\nu,t)=\E[\nu R^{N_t}].\]

This construction allows us to build a variety of limit processes for the LLRW, with a slow speed of convergence if needed. Indeed, choose $R$ to be the Markov kernel of a sub-geometrically ergodic Markov chain converging to a stationary measure $\pi$ at polynomial speed (for instance the kernels introduced in \cite{JR02}); the limit process will inherit the slow speed of convergence. More precisely, there exist $\beta\geq1$, a class of functions $\mathscr G$ and a function $W$ such that
\[d_{\mathscr G}(\nu R^n,\pi)\leq \frac{\nu(W)}{(1+n)^\beta}.\]
Then,
\[d_{\mathscr G}(\Phi(\nu,t),\pi)\leq \E\left[\frac{\nu(W)}{(1+N_t)^\beta}\right]\]
which goes to 0 at polynomial speed. Then, if $\sup_n\E[W(y_n)]<+\infty,$ which could be proven via troncature arguments as in Section~\ref{subsection:bandit}, we can use Theorem~\ref{theorem:SpeedConv}.iii) to conclude that $(y_n)$ converges weakly toward $\pi$.

Note that another example of sub-geometrically ergodic process is provided in \cite[Theorem~5.4]{DFG09}. The elliptic diffusions mentionned in this article converge slowly toward equilibrium, and could be approximated by a Euler scheme as in Section~\ref{subsection:euler}. In this example again, the use of troncature arguments to check Assumption~\ref{assumption:moments} could be enough for Theorem~\ref{theorem:SpeedConv}.iii) to hold.
\label{remark:slowmixing}
\end{remark}

\begin{remark}[Speed of convergence under Doeblin condition]
Assume there exists a measure $\psi$ and $\varepsilon>0$ such that for every $y$ and measurable set $A$, we have
$$
\int \mathds{1}_{y +z \in A} Q(y,dz) \geq \varepsilon \psi(A).
$$
It is the classical Doeblin condition, which ensures exponential uniform ergodicity in total variation distance. It is classic to prove that under this condition there exists an invariant distribution $\pi$, such that , for every $\mu$ and $t\geq 0$
$$
d_{\mathscr{F}} ( \mu P_t ,\pi) \leq e^{-t \varepsilon} d_{\mathscr{F}} ( \mu ,\pi) \leq e^{-t \varepsilon}
$$
Indeed, one can couple two trajectories as follows: choose the same jump times and, using the Doeblin condition, at each jumps, couple them with probability $\varepsilon$. The coupling time then follows an exponential distribution with parameter $\varepsilon$. Then, the conclusion of Proposition~\ref{proposition:ResultLLRW} holds with $v = \varepsilon^{-1}$.

However, one can use the Doeblin argument directly with the inhomogeneous chain. Let us denote by $(K_n)$ its sequence of transition kernels.  From the Doeblin condition, we have for every $\mu, \nu$ and $n \geq 0$
$$
d_{\mathscr{F}} ( \mu K_n ,\nu K_n) \leq (1- \gamma_{n+1} \varepsilon) d_{\mathscr{F}} ( \mu ,\nu).
$$
and as $\pi$ is invariant for $K_n$ (it is straighforward because $\pi$ is invariant for $Q$) then
$$
d_{\mathscr{F}} ( \mu K_n ,\pi) \leq (1- \gamma_{n+1} \varepsilon) d_{\mathscr{F}} ( \mu ,\pi).
$$
A recursion argument then gives
$$
d_{\mathscr{F}} ( \mathcal{L}(y_n) ,\pi) \leq \prod_{k=0}^n (1- \gamma_{k+1} \varepsilon) d_{\mathscr{F}} ( \mathcal{L}(y_0) ,\pi ).
$$
But, 
$$
\prod_{k=0}^n (1- \gamma_{k+1} \varepsilon) = \exp\left( \sum_{k=0}^n \ln(1-\gamma_{k+1} \varepsilon) \right) \leq \exp\left( \sum_{k=0}^n \ln(1-\gamma_{k+1} \varepsilon) \right)\leq e^{- \varepsilon \sum_{k=0}^n \gamma_{k+1} }.
$$

As a conclusion, Proposition~\ref{proposition:ResultLLRW} and the direct approach provide the same rate of convergence for the LLRW under Doeblin condition.
\label{remark:doeblin}
\end{remark}

\begin{remark}[Non-convergence in total variation]
Assume that $y_n\in\mathbb{R}_+$ and $Z_n=  -y_n/2$. We then have that
\[y_n = \prod_{i=1}^n \widetilde{\Theta}_i y_0,\quad \widetilde{\Theta}_i=
\left\{\begin{array}{ll}
1&\text{with probability }1-\gamma_i\\
\frac12&\text{with probability }\gamma_i
\end{array}\right..\]
where $\widetilde{\Theta}_i$ are independent random variables. Borel-Cantelli's Lemma entails that $(y_n)_{n\geq0}$ converges to $0$ almost surely and, here,
$$
\mathcal{L} f(y) =  f\left(\frac{y}{2}\right) - f(y).
$$
A process with such a generator never hits $0$ whenever it starts with a positive value and, then, does not converge in total variation distance. Nevertheless, it is easy to prove that for any $y$ and $t\geq 0$,
$$
d_{\mathcal{G}}(\delta_y P_t, \delta_0) \leq \E\left[ \frac{1}{2^{N_t}} \right] y \leq e^{-t/2} y,
$$
where $\mathcal{G}$ is any class of functions included in $\{ f \in\mathscr C^1_b : \ \Vert f' \Vert _\infty \leq 1 \}$, and $(N_t)$ a Poisson process. In particular Assumption~\ref{assumption:ergodicity}.ii) holds and there is convergence of our chain to zero in distribution, as well as a rate of convergence in the Fortet-Mourier distance.
\end{remark}

\section{Proofs of theorems}\label{section:proofs}

In the sequel, we consider the following classes of functions:
\begin{align}
\mathscr F_1&:=\left\{f\in\mathcal{D}(\mathcal{L}):\mathcal Lf\in\mathcal{D}(\mathcal{L}), \|f\|_\infty + \| \mathcal L f \|_\infty + \| \mathcal L \mathcal L f \|_\infty \leq 1 \right\},\notag\\
\mathscr F_2&:= \left\{ f \in \mathcal{D}(\mathcal{L}) \cap \mathscr C^{N_2}_b  : \sum_{j=0}^{N_2}\|  f^{(j)} \|_\infty \leq 1 \right\},\notag\\
\mathscr F&:=\mathscr F_1\cap\mathscr F_2.\notag
\end{align}
The class $\mathscr F_1$ is particularly useful to control $P_tf$ (see Lemma~\ref{lemma:F1}), and the class $\mathscr F_2$ enables us to deal with smooth and bounded functions (for the second part of the proof of Theorem~\ref{theorem:APTrPT}). Note that an important feature of $\mathscr F$ is that Lemma~\ref{lemma:WeakConv} holds for $\mathscr F_1\cap\mathscr F_2$, so that $\mathscr F$ contains $\mathscr C^\infty_c$ "up to a constant".

Let us begin with preliminary remarks on the properties of the semigroup $(P_t)$.
\begin{lemma}[Expansion of $P_tf$]
Let $f\in\mathscr F_1$. Then, for all $t>0$, $P_tf\in\mathscr F_1$ and
\[\sup_{f\in\mathscr F_1}\|P_tf-f-t\mathcal Lf\|_\infty\leq\frac{t^2}2.\]
\label{lemma:F1}\end{lemma}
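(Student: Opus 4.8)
The plan is to exploit the semigroup/generator relationship together with the bound $\|\mathcal{L}f\|_\infty + \|\mathcal{L}\mathcal{L}f\|_\infty \leq 1$ that comes for free from $f\in\mathscr F_1$. The key identity is the fundamental theorem of calculus in the Banach space $\mathscr C_0^0$: since $f\in\mathcal D(\mathcal L)$ and $\mathcal Lf\in\mathcal D(\mathcal L)$, the map $s\mapsto P_sf$ is continuously differentiable with $\frac{d}{ds}P_sf = P_s\mathcal Lf = \mathcal L P_sf$, and moreover $s\mapsto P_s\mathcal Lf$ is again differentiable with derivative $P_s\mathcal L\mathcal Lf$. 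Thus I would write the first-order Taylor expansion with integral remainder,
\[
P_tf - f - t\mathcal Lf = \int_0^t (P_s\mathcal Lf - \mathcal Lf)\,ds = \int_0^t\!\!\int_0^s P_r\mathcal L\mathcal Lf\,dr\,ds,
\]
and then estimate the double integral by $\|P_r\mathcal L\mathcal Lf\|_\infty \leq \|\mathcal L\mathcal Lf\|_\infty \leq 1$ (using that $(P_t)$ is a contraction on $\mathscr C_0^0$, being a Feller Markov semigroup), which yields $\|P_tf - f - t\mathcal Lf\|_\infty \leq \int_0^t\!\int_0^s 1\,dr\,ds = t^2/2$. Taking the supremum over $f\in\mathscr F_1$ gives the stated bound.

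For the first assertion, that $P_tf\in\mathscr F_1$ whenever $f\in\mathscr F_1$, I would argue as follows. First, $P_tf\in\mathcal D(\mathcal L)$ is standard for any $f\in\mathcal D(\mathcal L)$, and $\mathcal L P_tf = P_t\mathcal Lf$; since also $\mathcal Lf\in\mathcal D(\mathcal L)$ we get $\mathcal L P_tf = P_t\mathcal Lf\in\mathcal D(\mathcal L)$ with $\mathcal L\mathcal L P_tf = P_t\mathcal L\mathcal Lf$. It remains to check the norm bound: using the contraction property once more,
\[
\|P_tf\|_\infty + \|\mathcal L P_tf\|_\infty + \|\mathcal L\mathcal L P_tf\|_\infty = \|P_tf\|_\infty + \|P_t\mathcal Lf\|_\infty + \|P_t\mathcal L\mathcal Lf\|_\infty \leq \|f\|_\infty + \|\mathcal Lf\|_\infty + \|\mathcal L\mathcal Lf\|_\infty \leq 1,
\]
so $P_tf\in\mathscr F_1$.

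The only genuine subtlety — and the step I would be most careful about — is justifying the differentiation of $s\mapsto P_s\mathcal Lf$ and the exchange of integration needed to pass from the single integral remainder to the double integral (equivalently, Taylor's formula with integral remainder in the Banach space $\mathscr C_0^0$). This is routine semigroup theory: for $g = \mathcal Lf\in\mathcal D(\mathcal L)$ one has $P_sg - g = \int_0^s P_r\mathcal Lg\,dr$ as a Bochner integral, the integrand $r\mapsto P_r\mathcal Lg = P_r\mathcal L\mathcal Lf$ being continuous in $\mathscr C_0^0$ by the strong continuity of $(P_t)$; one then integrates this identity in $s$ over $[0,t]$ and invokes Fubini for Bochner integrals. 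Everything else is a direct estimate, so I would keep the write-up short and simply cite \cite{EK86} for the Banach-space calculus of Feller semigroups.
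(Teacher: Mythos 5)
Your proof is correct and follows essentially the same route as the paper: both write $P_tf-f-t\mathcal Lf=\int_0^t(P_s\mathcal Lf-\mathcal Lf)\,ds$, bound the integrand by $s\|\mathcal L\mathcal Lf\|_\infty\leq s$ (you via a second Bochner integral and contractivity, the paper via the mean value inequality, which is the same estimate), and deduce $P_tf\in\mathscr F_1$ from $\mathcal LP_tg=P_t\mathcal Lg$ and $\|P_tg\|_\infty\leq\|g\|_\infty$. No gap to report.
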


\begin{proof}[Proof of Lemma~\ref{lemma:F1}]
It is clear that $P_tf\in\mathscr F_1$, since for all $g\in\mathcal D(\mathcal L)$, $P_t\mathcal Lg=\mathcal LP_tg$ and $\|P_tg\|_\infty\leq\|g\|_\infty$. Now, if $f \in \mathscr{F}_1$, then
\[P_tf=f+\int_0^tP_s\mathcal{L}fds=f+t\mathcal{L}f+K(f,t),\]
where $K(f,t)=P_tf-f-t\mathcal Lf$. Using the mean value inequality, we have, for $x\in\R^D$,
\begin{align*}
|K(f,t)(x)|&= \left|\int_0^t P_s \mathcal{L} f(x) ds - \mathcal{L} f(x)\right| \leq \int_0^t |P_s \mathcal{L} f(x) - \mathcal{L} f(x)| ds\\
&\leq \int_0^t s \| \mathcal{L} \mathcal{L} f \|_\infty ds \leq\frac{t^2}2,
\end{align*}
which concludes the proof.
\end{proof}

\begin{proof}[Proof of Theorem~\ref{theorem:APTrPT}]
For every $t\geq 0$, set $K(f,t):=P_tf-f-t\mathcal Lf$ and recall that $m(t)=\sup\{n \geq 0 \ : \ t \geq \tau_n\}$. Then, we have $Y_{\tau_{m(t)}}=Y_t$ and $\tau_{m(t)}\leq t<\tau_{m(t)+1}$. Let $0<s<T$. Using the following telescoping sum, we have
\begin{align}
d_{\mathscr F}(\mu_{t+s},\Phi(\mu_t,s))
&=d_{\mathscr F}(\mu_{\tau_{m(t+s)}},\Phi(\mu_{\tau_{m(t)}},s))\notag\\
&\leq d_{\mathscr F}(\Phi(\mu_{\tau_{m(t)}}, \tau_{m(t+s)} - \tau_{m(t)}),\Phi(\mu_{\tau_{m(t)}},s))\notag\\
&\quad+ d_{\mathscr F}(\mu_{\tau_{m(t+s)}},\Phi(\mu_{\tau_{m(t)}}, \tau_{m(t+s)} - \tau_{m(t)}))\notag\\
&\leq d_{\mathscr F}(\Phi(\mu_{\tau_{m(t)}}, \tau_{m(t+s)} - \tau_{m(t)}),\Phi(\mu_{\tau_{m(t)}}, s))\notag\\
& \quad + \sum_{k= m(t)}^{m(t+s)-1} d_{\mathscr F}\left(\Phi\left(\mu_{\tau_{k+1}}, \sum_{j=k+2}^{m(t+s)} \gamma_j\right),\Phi\left(\mu_{\tau_{k}}, \sum_{j=k+1}^{m(t+s)} \gamma_j\right) \right),
\label{eq:APTProof4}\end{align}
with the convention $\sum_{k=i+1}^i=0$. Our aim is now to bound each term of this sum. The first one is the simplest: indeed, we have $s\leq\tau_{m(t+s)+1}-\tau_{m(t)}$, so $s-\gamma_{m(t+s)+1}\leq\tau_{m(t+s)}-\tau_{m(t)}$ and $\tau_{m(t+s)}-\tau_{m(t)}\leq s+\gamma_{m(t)+1}$. Denoting by $u=s\wedge(\tau_{m(t+s)}-\tau_{m(t)})$ and $h=|\tau_{m(t+s)}-\tau_{m(t)}-s|$ we have, by the semigroup property,
\[d_{\mathscr F}\left(\Phi(\mu_{t}, \tau_{m(t+s)} - \tau_{m(t)}),\Phi(\mu_{t}, s)\right)=d_{\mathscr F}\left(\Phi(\Phi(\mu_{t},u), h),\Phi(\mu_{t},u)\right).\]
From Lemma~\ref{lemma:F1}, we know that for every $ f\in\mathscr F_1$ and every probability measure $\nu$,
\[|\Phi(\nu,h)(f)-\nu(f)|=|\nu(P_hf-f)|\leq h + \frac{h^2}2\leq \frac32 h,\]
for $h\leq1$. It is then straightforward that 
\begin{equation}
d_{\mathscr F}\left(\Phi(\mu_{t}, \tau_{m(t+s)} - \tau_{m(t)}),\Phi(\mu_{t}, s)\right)\leq \frac32 h\leq\frac32 \gamma_{m(t)+1}.
\label{eq:APTProof1}\end{equation}
Now, we provide bounds for the generic term of the telescoping sum in \eqref{eq:APTProof4}. Let $f \in \mathscr F_1$ and $m(t)\leq k\leq m(t+s)-1$. On the one hand, using Lemma~\ref{lemma:F1},
\begin{align*}
\Phi\left(\mu_{\tau_k}, \sum_{j=k+1}^{m(t+s)} \gamma_j\right) (f)
&=\mu_{\tau_k} P_{\sum_{j=k+1}^{m(t+s)} \gamma_j} (f)\\
&=\mu_{\tau_k}(P_{\tau_{m(t+s)}-\tau_{k+1}} f) + \int_0^{ \gamma_{k+1}} \mu_{\tau_k} (\mathcal{L} P_{\tau_{m(t+s)}-\tau_{k+1}+u}f) du\\
&=\mu_{\tau_k}(P_{\tau_{m(t+s)}-\tau_{k+1}} f) + \gamma_{k+1} \mu_{\tau_k} (\mathcal{L} P_{\tau_{m(t+s)}-\tau_{k+1}}f)\\
&\quad+ K\left(P_{\tau_{m(t+s)}-\tau_{k+1}}f, \gamma_{k+1}\right).
\end{align*}
On the other hand,
\begin{align*}
\mu_{\tau_{k+1}} (f)
&= \mu_{\tau_{k}} (f) + \gamma_{k+1} \mu_{\tau_{k}} (\mathcal{L}_k f)
\end{align*}
so that
\begin{align*}
\Phi\left(\mu_{\tau_{k+1}}, \sum_{j=k+2}^{m(t+s)} \gamma_j\right) (f)
&=\mu_{\tau_{k+1}} (P_{\tau_{m(t+s)}-\tau_{k+1}} f)\\
&=\mu_{\tau_{k}} (P_{\tau_{m(t+s)}-\tau_{k+1}}f) + \gamma_{k+1} \mu_{\tau_{k}} (\mathcal{L}_k P_{\tau_{m(t+s)}-\tau_{k+1}} f).
\end{align*}
Henceforth,
\begin{align*}
\Phi\left(\mu_{\tau_{k+1}}, \sum_{j=k+2}^{m(t+s)} \gamma_j\right) (f) -\Phi\left(\mu_{\tau_k}, \sum_{j=k+1}^{m(t+s)} \gamma_j\right) (f)
&\leq \gamma_{k+1} \mu_{\tau_{k}} ((\mathcal{L}_k  - \mathcal{L})P_{\tau_{m(t+s)}-\tau_{k+1}} f)\\
&\quad+ K\left(P_{\tau_{m(t+s)}-\tau_{k+1}}f, \gamma_{k+1}\right).
\end{align*}
Now, we bound the previous term using Assumption~\ref{assumption:convergence}, Assumption~\ref{assumption:regularity}, and Assumption~\ref{assumption:moments}. Let $m(t)\leq k\leq m(t+s)-1$. Recall that, since $s<T$,  $\tau_{m(t+s)}-\tau_{k+1}\leq\tau_{m(t+s)}-\tau_{m(t)+1}\leq(t+s)-t\leq T$. Then, for all $f\in\mathscr F_2$,
\begin{align*}
&|\mu_{\tau_{k}} ((\mathcal{L}_k  - \mathcal{L})P_{\tau_{m(t+s)}-\tau_{k+1}} f)|\leq\mu_{\tau_{k}} (|(\mathcal{L}_k -\mathcal L)P_{\tau_{m(t+s)}-\tau_{k+1}} f|)\\
	&\quad\leq\mu_{\tau_{k}}\left(M_1\chi_{d_1}\sum_{j=0}^{N_1}\|(P_{\tau_{m(t+s)}-\tau_{k+1}} f)^{(j)}\|_\infty\epsilon_k\right)\leq\mu_{\tau_{k}}\left(M_1(N_1+1)C_T\chi_d\sum_{j=0}^{N_2}\|f^{(j)}\|_\infty\epsilon_k\right)\\
	&\quad\leq M_1(N_1+1)C_T\E[\chi_d(y_k)]\sum_{j=0}^{N_2}\|f^{(j)}\|_\infty\epsilon_k\leq M_1M_2(N_1+1)C_T\sum_{j=0}^{N_2}\|f^{(j)}\|_\infty\epsilon_k\\
	&\quad\leq M_1M_2(N_1+1)C_T\epsilon_k.
\end{align*}
Gathering the previous bounds entails
\begin{align}
&\sum_{k= m(t)}^{m(t+s)-1} d_{\mathscr F}\left(\Phi\left(\mu_{\tau_{k+1}}, \sum_{j=k+2}^{m(t+s)} \gamma_j\right),\Phi\left(\mu_{\tau_{k}}, \sum_{j=k+1}^{m(t+s)} \gamma_j\right) \right)\notag\\
&\quad\leq \sum_{k= m(t)}^{m(t+s)-1} \left(M_1M_2(N_1+1)C_T\gamma_{k+1}\epsilon_k+\frac{\gamma_{k+1}^2}2\right)\notag\\
&\quad\leq (T+1)\left(M_1M_2(N_1+1)C_T+\frac12\right)(\gamma_{m(t)} \vee \epsilon_{m(t)}).\label{eq:APTProof2}
\end{align}
Thus, combining \eqref{eq:APTProof4}, \eqref{eq:APTProof1} and \eqref{eq:APTProof2} yields
\begin{equation}
\sup_{s\leq T}d_{\mathscr F}(\mu_{t+s},\Phi(\mu_t,s))\leq C'_T(\gamma_{m(t)} \vee \epsilon_{m(t)}),
\label{eq:APTProof3}\end{equation}
with $C'_T=\frac32+(T+1)\left(M_1M_2(N_1+1)C_T+\frac12\right)$. Then, $(\mu_t)_{t\geq0}$ is an asymptotic pseudotrajectory of $\Phi$ (with respect to $d_{\mathscr F}$).

Now, we turn to the study of the case $\lambda(\gamma,\epsilon)>0$. For any $\lambda<\lambda(\gamma,\epsilon)$, we have (for $n$ large enough) $\gamma_n\vee\epsilon_n \leq\exp(-\lambda\tau_n)$. Then, for any $t$ large enough,
\[\gamma_{m(t)}\vee\epsilon_{m(t)}\leq\e^{-\lambda\tau_{m(t)}}
\leq\e^{\lambda(t-\tau_{m(t)})}\e^{-\lambda t}
\leq\e^{\lambda(\gamma,\epsilon)}\e^{-\lambda t}.\]
Now, plugging this upper bound in \eqref{eq:APTProof3}, we get, for $\lambda<\lambda(\gamma,\epsilon)$,
\begin{equation}
\sup_{s\leq T}d_{\mathscr F}(\mu_{t+s},\Phi(\mu_t,s))\leq \e^{\lambda(\gamma,\epsilon)}C'_T\e^{-\lambda t}.
\label{eq:APTProof5}
\end{equation}
Finally, we can deduce that
\[\limsup_{t\to+\infty}\frac1t\log\left(\sup_{0\leq s\leq T}d(\mu_{t+s},\Phi(\mu_t, s))\right)\leq-\lambda\]
for any $\lambda<\lambda(\gamma,\epsilon)$, which concludes the proof of Theorem~\ref{theorem:APTrPT}.
\end{proof}

\begin{proof}[Proof of Theorem~\ref{theorem:SpeedConv}]The first part of the proof is an adaptation of \cite{Ben99}. Assume Assumption~\ref{assumption:ergodicity}.i) and, without loss of generality, assume $M_3>1$. If $v>\lambda(\gamma,\epsilon)$, fix $\varepsilon>v-\lambda(\gamma,\epsilon)$, otherwise let $\varepsilon>0$, and set $u:=v-\varepsilon$, $T_\varepsilon:=\varepsilon^{-1}\log M_3$. Since $u<\lambda(\gamma,\epsilon)$, and using \eqref{eq:APTProof5}, the following sequence of inequalities holds, for any $T\in[T_\varepsilon,2T_\varepsilon]$ and $n\in\N$:
\begin{align*}
d_{\mathscr{G}}\left(\mu_{(n+1)T},\pi\right)
&\leq d_{\mathscr{G}}\left(\mu_{(n+1)T},\Phi(\mu_{nT},T)\right)+ d_{\mathscr{G}}\left(\Phi(\mu_{nT},T),\pi\right)\\
	&\leq \e^{\lambda(\gamma,\epsilon)}C'_{T}\e^{-unT}+M_3d_{\mathscr{G}}\left(\mu_{nT},\pi\right)\e^{-vT}\\
	&\leq \e^{\lambda(\gamma,\epsilon)}C'_{T}\e^{-unT}+d_{\mathscr{G}}\left(\mu_{nT},\pi\right)\e^{-uT},
\end{align*}
with $C'_T=\frac32+(T+1)\left(M_1M_2(N_1+1)C_T+\frac12\right)$. Denoting by $\delta_n:=d_{\mathscr{G}}\left(\mu_{nT},\pi\right)$ and $\rho:=\e^{-uT}$, the previous inequality turns into $\delta_{n+1}\leq \e^{\lambda(\gamma,\epsilon)}C'_{T}\rho^n+\rho\delta_n$, from which we derive
\[\delta_n\leq n\rho^{n-1}C'_{T}\e^{\lambda(\gamma,\epsilon)}+\rho^n\delta_0.\]
Hence, for every $n\geq0$ and $T\in[T_\varepsilon,2T_\varepsilon]$, we have
\[d_{\mathscr{G}}\left(\mu_{nT},\pi\right)\leq\e^{-(u-\varepsilon)nT}\left(M_5+d_{\mathscr{G}}\left(\mu_0,\pi\right)\right),\quad M_5=\e^{\lambda(\gamma,\epsilon)}\left(\sup_{n\geq0}n\e^{-\varepsilon nT}\right)\left(\sup_{T\in[T_\varepsilon,2T_\varepsilon]}C'_T\right).\]
Then, for any $t>T_\varepsilon$, let $n=\lfloor tT_\varepsilon^{-1}\rfloor$ and $T=tn^{-1}$. Then, $T\in[T_\varepsilon,2T_\varepsilon]$ and the following upper bound holds:
\[d_{\mathscr{G}}\left(\mu_t,\pi\right)\leq\left(M_5+d_{\mathscr{G}}\left(\mu_0,\pi\right)\right)\e^{-(u-\varepsilon)t}.\]

Now, assume Assumption~\ref{assumption:ergodicity}.ii). For any (small) $\varepsilon>0$, there exists $\e^{\lambda(\gamma,\epsilon)}$ such that $\gamma_{m(t)}\vee\epsilon_{m(t)}\leq \e^{\lambda(\gamma,\epsilon)}\exp(-(\lambda(\gamma,\epsilon)-\varepsilon)t)$. For any $\alpha\in(0,1)$, we have
\begin{align}
d_{\mathscr F\cap\mathscr G}(\mu_t,\pi)
&\leq d_{\mathscr F\cap\mathscr G}(\mu_t,\Phi(\mu_{\alpha t},(1-\alpha)t))+ d_{\mathscr F\cap\mathscr G}(\Phi(\mu_{\alpha t},(1-\alpha)t),\pi)\notag\\
&\leq C'_{(1-\alpha)t}(\gamma_{m(\alpha t)}\vee\epsilon_{m(\alpha t)})+M_3\e^{-v(1-\alpha)t}\notag\\
&\leq M_4\e^{r(1-\alpha)t} \e^{\lambda(\gamma,\epsilon)}\e^{-(\lambda(\gamma,\epsilon)-\varepsilon)\alpha t}+M_3\e^{-v(1-\alpha)t}.\label{eq:SpeedConvProof1}
\end{align}
Optimizing \eqref{eq:SpeedConvProof1} by taking $\alpha=(r+v)(r+v+\lambda(\gamma,\epsilon)-\varepsilon)^{-1}$, we get
\[d_{\mathscr F\cap\mathscr G}(\mu_t,\pi)\leq M_5\exp\left(-\frac{v(\lambda(\gamma,\epsilon)-\varepsilon)}{r+v+\lambda(\gamma,\epsilon)-\varepsilon}t\right),\]
with $M_5=M_4\e^{\lambda(\gamma,\epsilon)}+M_3$, which depends on $\varepsilon$ only through $M_3$.

Lastly, assume Assumption~\ref{assumption:convergence}.iii). Denote by $\mathcal K$ the set of probability measures $\nu$ such that
\[\nu(W)<M=\sup_{n\geq0} \E[W(y_n)].\]
Let $\varepsilon>0$ and $K=\{x\in\R^D:W(x)\leq M/\varepsilon\}$. For every $\nu\in \mathcal K$, using Markov's inequality, it is clear that
\[\nu(K^C)\leq\frac{\varepsilon}{M}\nu(W)\leq\varepsilon.\]
Then $\mathcal K$ is a relatively compact set (by Prokhorov's Theorem). The measure $\pi$ is an attractor in the sense of \cite{Ben99}, which means that $\lim_{t\to+\infty}d_{\mathscr G}(\Phi(\nu,t),\pi)=0$ uniformly in $\nu\in\mathcal K$. Then, since for any $t>0,\mu_t\in \mathcal K$, we can apply \cite[Theorem~6.10]{Ben99} to achieve the proof.
\end{proof}

\begin{proof}[Proof of Theorem~\ref{theorem:functionalCV}]
We shall prove the convergence of the sequence of processes $(Y^{(t)}_s)_{0\leq s\leq T}$ , as $t\to+\infty$, toward $(X^\pi_s)_{0\leq s\leq T}$ in the Skorokhod space $D([0,T])$, for any $T>0$. Then, using \cite[Theorem~16.7]{Bil99}, this convergence entails Theorem~\ref{theorem:functionalCV}, i.e. convergence of the sequence $(Y^{(t)})$ in $D([0,\infty))$.

Let $T>0$. The proof of functional convergence classically relies on proving the convergence of finite-dimensional distributions, on the one hand, and tightness, on the other hand. First, we prove the former, which is the first part of Theorem~\ref{theorem:functionalCV}. We choose to prove the convergence of the finite-dimensional distributions in the case $m=2$. The proof for the general case is similar but with a laborious notation. Denote by $T_{u,v}g(y):=\E[g(Y_v)|Y_u=y]$. With this notation, \eqref{eq:APTProof3} becomes
\[\sup_{s\leq T}\sup_{g\in\mathscr F}\left(\mu_tT_{t,t+s}g-\mu_tP_sg\right)\leq C'_T(\gamma_{m(t)} \vee \epsilon_{m(t)}).\]
This upper bound does not depend on $\mu_t$, so, for any probability distribution $\nu$, we have
\[
\sup_{s\leq T}\sup_{g\in\mathscr F}\left(\nu T_{t,t+s}g-\nu P_sg\right)\leq C'_T(\gamma_{m(t)} \vee \epsilon_{m(t)}).
\]
This inequality implies that, for any $\nu$,
\begin{equation}\sup_{s_1\leq s_2\leq T}\sup_{g\in\mathscr F}\left(\nu T_{t+s_1,t+s_2}g-\nu P_{s_2-s_1}g\right)\leq C'_T(\gamma_{m(t)} \vee \epsilon_{m(t)}),
\label{eq:FunctionalCVProof1}
\end{equation}
which converges toward 0 as $t\to+\infty$. From now on, we denote, for any function $f$, $\widehat f_x(y):=f(x,y)$. If $f$ is a smooth function (say in $\mathscr C^\infty_c$ with enough derivatives bounded), $\hat f_\cdot(\cdot)\in\mathscr F$. On the one hand, for $0,s_1<s_2<T$,
\[\E[f(X^\pi_{s_1},X^\pi_{s_2})]=\int P_{s_2-s_1}\widehat f_y(y)\pi(dy)=\pi P_{s_2-s_1}\widehat f_\cdot(\cdot).\]
On the other hand, we have
\begin{align*}
\E[f(Y^{(t)}_{s_1},Y^{(t)}_{s_2}]&=\E\left[\E[f(Y^{(t)}_{s_1},Y^{(t)}_{s_2}|Y^{(t)}_{s_1}]\right]
	=\E\left[T_{t+s_1,t+s_2}\widehat f_{Y_{t+s_1}}(Y_{t+s_1})\right]\\
	&=T_{0,t+s_1}\left(T_{t+s_1,t+s_2}\widehat f_\cdot(\cdot)\right).
\end{align*}
We have the following triangle inequality:
\begin{align}
\left|\E[f(Y^{(t)}_{s_1},Y^{(t)}_{s_2}]- \E[f(X^\pi_{s_1},X^\pi_{s_2})]\right|
	&=\left|T_{0,t+s_1}\left(T_{t+s_1,t+s_2}\widehat f_\cdot(\cdot)\right)-\pi P_{s_2-s_1}\widehat f_\cdot(\cdot)\right|\notag\\
	&\leq\left|T_{0,t+s_1}\left(T_{t+s_1,t+s_2}\widehat f_\cdot(\cdot)- P_{s_2-s_1}\widehat f_\cdot(\cdot)\right)\right|\notag\\
	&\quad+\left|T_{0,t+s_1}\left(P_{s_2-s_1}\widehat f_\cdot(\cdot)\right)-\pi P_{s_2-s_1}\widehat f_\cdot(\cdot)\right|\label{eq:FunctionalCVProof2}
\end{align}
Firstly, using \eqref{eq:FunctionalCVProof1}, and if $\widehat f_\cdot(\cdot)\in\mathscr F$,
\[\lim_{t\to\infty}T_{0,t+s_1}\left(T_{t+s_1,t+s_2}\widehat f_\cdot(\cdot)- P_{s_2-s_1}\widehat f_\cdot(\cdot)\right)=\lim_{t\to\infty}\mu_{t+s_1}\left(T_{t+s_1,t+s_2}\widehat f_\cdot(\cdot)- P_{s_2-s_1}\widehat f_\cdot(\cdot)\right)=0.\]
Secondly, $P_{s_2-s_1}f_\cdot(\cdot)\in\mathscr C^0_b$ and, using Theorem~\ref{theorem:SpeedConv},
\[\lim_{t\to\infty}T_{0,t+s_1}\left(P_{s_2-s_1}\widehat f_\cdot(\cdot)\right)-\pi P_{s_2-s_1}\widehat f_\cdot(\cdot)=0.\]
From \eqref{eq:FunctionalCVProof2}, it is straightforward that, for a smooth $f$,
\[\lim_{t\to\infty}\left|\E[f(Y^{(t)}_{s_1},Y^{(t)}_{s_2}]- \E[f(X^\pi_{s_1},X^\pi_{s_2})]\right|=0,\]
and applying Lemma~\ref{lemma:WeakConv} achieves the proof of finite dimensional convergence for $m=2$.

To prove tightness, which is the second part of Theorem~\ref{theorem:functionalCV}, we need the following lemma, whose proof is postponed to Section~\ref{section:appendix}.

\begin{lemma}[Martingale properties]
Let $f$ be a continuous and bounded function. The process $(\widehat{M}_n^f)_{n\geq 0}$, defined for every $n\geq 0$ by
\[\widehat{M}^{f}_n = f(y_n) - f(y_0) - \sum_{k=0}^{n-1} \gamma_{k+1} \mathcal{L}_k f(y_k),\]
is a martingale, with
\[\langle \widehat{M}^{f} \rangle_n= \sum_{k=0}^{n-1} \gamma_{k+1}\Gamma_kf(y_k).\]

Moreover, under Assumption~\ref{assumption:variance}, if $d\geq d_2$ then for every $N\geq 0$, there exist a constant $M_7>0$ (depending on $N$ and $y_0$) such that
\[\mathbb{E}\left[\sup_{n \leq N} \chi_{d_1} (y_n)\right] \leq M_7.\]
\label{lemma:mart}
\end{lemma}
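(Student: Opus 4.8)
The plan is to prove the two assertions in turn; the martingale identity is a direct conditional-expectation computation, and the maximal bound combines that identity with Doob's inequality and the moment bounds supplied by the other assumptions. For the martingale part, let $(\mathcal F_n)_{n\ge0}$ be the natural filtration of $(y_n)$ and set $\Delta_k:=\widehat M^f_{k+1}-\widehat M^f_k=f(y_{k+1})-f(y_k)-\gamma_{k+1}\mathcal L_kf(y_k)$. By the Markov property and the very definition of $\mathcal L_k$, $\E[f(y_{k+1})\mid\mathcal F_k]=\E[f(y_{k+1})\mid y_k]=f(y_k)+\gamma_{k+1}\mathcal L_kf(y_k)$, so $\E[\Delta_k\mid\mathcal F_k]=0$; integrability is immediate since $f$ is bounded (and $\gamma_{k+1}\mathcal L_kf(y_k)=\E[f(y_{k+1})-f(y_k)\mid y_k]$ has modulus at most $2\|f\|_\infty$), whence $\widehat M^f$ is a martingale. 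For the bracket, write $\Delta_k=A_k-B_k$ with $A_k=f(y_{k+1})-f(y_k)$ and $B_k=\gamma_{k+1}\mathcal L_kf(y_k)=\E[A_k\mid\mathcal F_k]$, so that $\E[\Delta_k^2\mid\mathcal F_k]=\E[A_k^2\mid\mathcal F_k]-B_k^2$; expanding $A_k^2$ and evaluating $\E[f(y_{k+1})^2-f(y_k)^2\mid y_k]=\gamma_{k+1}\mathcal L_k(f^2)(y_k)$ together with $\E[f(y_{k+1})\mid y_k]=f(y_k)+\gamma_{k+1}\mathcal L_kf(y_k)$, the terms rearrange into $\E[\Delta_k^2\mid\mathcal F_k]=\gamma_{k+1}\big(\mathcal L_kf^2-2f\mathcal L_kf-\gamma_{k+1}(\mathcal L_kf)^2\big)(y_k)=\gamma_{k+1}\Gamma_kf(y_k)$, and summing over $k<n$ gives $\langle\widehat M^f\rangle_n=\sum_{k=0}^{n-1}\gamma_{k+1}\Gamma_kf(y_k)$.

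For the maximal bound, the idea is to run the same decomposition with $f$ replaced by $\chi_{d_2}$, which is precisely the test function for which Assumption~\ref{assumption:variance} provides Lyapunov-type control. Since $d\ge d_2$, Assumption~\ref{assumption:moments} gives $\sup_k\E[\chi_{d_2}(y_k)]\le\sup_k\E[\chi_d(y_k)]\le M_2$; in particular $\widehat M^{\chi_{d_2}}$ is integrable (each $\gamma_{k+1}\mathcal L_k\chi_{d_2}(y_k)=\E[\chi_{d_2}(y_{k+1})-\chi_{d_2}(y_k)\mid y_k]$ has finite expectation), so the computation above still shows it to be a martingale with bracket $\sum_{k<n}\gamma_{k+1}\Gamma_k\chi_{d_2}(y_k)$, and $\E[\langle\widehat M^{\chi_{d_2}}\rangle_N]\le M_6\sum_{k<N}\gamma_{k+1}\E[\chi_d(y_k)]\le M_6M_2\tau_N<\infty$ upgrades it to a true $L^2$ martingale. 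From $\chi_{d_2}(y_n)=\chi_{d_2}(y_0)+\sum_{k<n}\gamma_{k+1}\mathcal L_k\chi_{d_2}(y_k)+\widehat M^{\chi_{d_2}}_n$ and $\mathcal L_k\chi_{d_2}\le M_6\chi_{d_2}$, the drift term is at most $M_6\sum_{k<N}\gamma_{k+1}\chi_{d_2}(y_k)$ uniformly in $n\le N$, with expectation at most $M_6M_2\tau_N$; Doob's $L^2$ inequality and Cauchy--Schwarz then give $\E[\sup_{n\le N}|\widehat M^{\chi_{d_2}}_n|]\le2\sqrt{M_6M_2\tau_N}$. Altogether $\E[\sup_{n\le N}\chi_{d_2}(y_n)]\le\chi_{d_2}(y_0)+M_6M_2\tau_N+2\sqrt{M_6M_2\tau_N}=:M_7$, which gives the claim when $d_1\le d_2$; in the contrary case one repeats the scheme coordinatewise, decomposing each $\varphi_i$ (using $\mathcal L_n\varphi_i,\Gamma_n\varphi_i\le M_6\chi_{d_2}$) and invoking Doob's $L^p$ maximal inequality with $p=(d_1)_i$ to bound $\E[(\sup_{n\le N}|\varphi_i(y_n)|)^p]$.

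The only slightly delicate points are routine: (i) justifying a priori that $\widehat M^{\chi_{d_2}}$, and the coordinate martingales, are genuine $L^2$ martingales despite the test functions being unbounded — this is handled through the finiteness of $\E[\langle\cdot\rangle_N]$ established above, or alternatively by a truncation/localization argument; and (ii) keeping track of the multi-indices $d_1,d_2,d$ so that every moment invoked is one actually controlled by Assumptions~\ref{assumption:moments} and \ref{assumption:variance} (recall that, by the discussion following Assumption~\ref{assumption:moments}, one may freely enlarge $d$, and similarly $d_2$). I do not anticipate a substantive obstacle beyond this bookkeeping.
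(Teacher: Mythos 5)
Your proof is correct, and while the martingale and bracket computations coincide with the paper's (the paper expands $\E[(\widehat M^f_{n+1})^2\mid\mathcal F_n]$ in full, you work with the increment $\Delta_k$ and the identity $\E[\Delta_k^2\mid\mathcal F_k]=\E[A_k^2\mid\mathcal F_k]-B_k^2$, which is the same content in streamlined form), your treatment of the maximal bound takes a genuinely different and in fact simpler route. The paper writes $\sup_{n\leq N}\chi_{d_2}(y_n)\leq\chi_{d_2}(y_0)+M_6\sum_{k<N}\gamma_{k+1}\sup_{n\leq k}\chi_{d_2}(y_n)+\sup_{n\leq N}\widehat M_n^{\chi_{d_2}}$, takes $L^2$ norms, and closes the estimate with a discrete Gr\"onwall recursion before coming back to $L^1$ by Cauchy--Schwarz; you instead bound the drift supremum directly by $M_6\sum_{k<N}\gamma_{k+1}\chi_{d_2}(y_k)$ (legitimate, since $\chi_{d_2}\geq0$ and $\mathcal L_k\chi_{d_2}\leq M_6\chi_{d_2}$, so the positive parts of the increments are dominated termwise) and then use the hypothesis $d\geq d_2$ together with Assumption~\ref{assumption:moments} to get $\E[\chi_{d_2}(y_k)]\leq M_2$, so the drift contributes at most $M_6M_2\tau_N$ in expectation; combined with Doob's $L^2$ inequality for the martingale part this gives the $L^1$ bound on the supremum with no Gr\"onwall step. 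What the paper's route buys is an $L^2$ bound on $\sup_{n\leq N}\chi_{d_2}(y_n)$, which is stronger than needed; what yours buys is brevity, and since the lemma (and its use in the tightness argument of Theorem~\ref{theorem:functionalCV}) only requires the first moment of the supremum, nothing is lost. Two marginal remarks: your caveat (i) about $\chi_{d_2}$ being unbounded is at the same level of rigor as the paper, which also applies the bounded-$f$ computation to $\chi_{d_2}$ without comment, relying on the finiteness guaranteed by Assumptions~\ref{assumption:moments} and \ref{assumption:variance}; and your handling of the $\chi_{d_1}$ versus $\chi_{d_2}$ discrepancy is welcome, since the paper's own proof (and its later use) really concerns $\chi_{d_2}$ --- just note that your coordinatewise fallback via Doob's $L^p$ inequality with $p=(d_1)_i$ is only a sketch: for $p>2$ it would need moment control of the bracket (Burkholder--Davis--Gundy type) beyond what Assumption~\ref{assumption:variance} provides, so in the case $d_1\not\leq d_2$ one should rather enlarge $d_2$, as you suggest in your final remark.
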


Now, define
\begin{align*}
M^{(t,i)}_s &= \widehat{M}_{m(t+s)}^{\varphi_i} -\widehat{M}_{m(t)}^{\varphi_i},\\
A^{(t,i)}_s
&= \varphi_i(Y_t) + \int_{\tau_{m(t)}}^{\tau_{m(t+s)}} \mathcal{L}_{m(u)} \varphi_i(Y_u) du=\varphi_i(y_{m(t)}) + \sum_{k=m(t)}^{m(t+s)-1} \gamma_{k+1} \mathcal{L}_k \varphi_i (y_k)
\end{align*}
and
$$
Y^{(t,i)}_s= \varphi_i (Y^{(t)}_s).
$$
With this notation and Lemma~\ref{lemma:mart}, we have
$$
Y^{(t,i)}_s = A^{(t,i)}_s + M^{(t,i)}_s
$$
and $(M^{(t,i)}_s)_{s\geq 0}$ is a martingale with quadratic variation
$$
\langle M^{(t,i)} \rangle_s = \int_{\tau_{m(t)}}^{\tau_{m(t+s)}} \Gamma_{m(u)} \varphi_i(Y_u) du,
$$
where $\Gamma_n$ is as in Assumption~\ref{assumption:variance}. From the convergence of finite-dimensional distributions, for every $s\in [0,T]$, the sequence $(Y^{(t)}_s)_{t\geq 0}$ is tight. It is then enough, from the Aldous-Rebolledo criterion (see Theorems~2.2.2 and 2.3.2 in \cite{JM86}) and Lemma \ref{lemma:mart} to show that: for every $S \geq 0$, $ \varepsilon, \eta > 0$, there exists a $\delta>0$ and $t_0>0$ with the property that whatever the family of stopping times $(\sigma^{(t)})_{t\geq0}$, with $\sigma^{(t)} \leq S$, for every $i \in \{1,\dots D\}$,
\begin{equation}
\label{eq:Aldous-varquad}
\sup_{t\geq t_0} \sup_{\theta \leq \delta} \ \mathbb{P} \left(  \left| \langle M^{(t,i)} \rangle_{\sigma^{(t)}} - \langle M^{(t,i)} \rangle_{\sigma^{(t)} + \theta}  \right|  \geq \eta \right) \leq \varepsilon
\end{equation}
and
\begin{equation}
\label{eq:Aldous-varfinie}
\sup_{t\geq t_0} \sup_{\theta \leq \delta} \ \mathbb{P} \left( \left| A^{(t,i)}_{\sigma^{(t)}} -A^{(t,i)}_{\sigma^{(t)} + \theta}  \right|  \geq \eta \right) \leq \varepsilon. 
\end{equation}
We have, using Assumption~\ref{assumption:variance},
\begin{align*}
A^{(t,i)}_{\sigma^{(t)} + \theta} -A^{(t,i)}_{\sigma^{(t)}} &=\int_{\tau_{m(t+\sigma^{(t)})}}^{\tau_{m(t+\sigma^{(t)} + \theta)}} \mathcal L_{m(u)}\varphi_i(Y_u) du\leq \int_{\tau_{m(t+\sigma^{(t)})}}^{\tau_{m(t+\sigma^{(t)} + \theta)}} M_6 \chi_{d_2} (Y_u) du\\
&\leq M_6  | \tau_{m(t+\sigma^{(t)} + \theta)}- \tau_{m(t+\sigma^{(t)})}| \sup_{r \leq T}  \chi_{d_2} (Y_r).
\end{align*}
From the definition of $\tau_n$,
$$
|\tau_{m(t+\sigma^{(t)} + \theta)}- \tau_{m(t+\sigma^{(t)})}| \leq \theta + \gamma_{m(t)+1},
$$
and then, using Lemma~\ref{lemma:mart} and Markov's inequality
$$
\mathbb{P} \left( \left| A^{(t,i)}_{\sigma^{(t)}} -A^{(t,i)}_{\sigma^{(t)} + \theta}  \right|  \geq \eta \right) \leq \frac{ M_6(\theta + \gamma_{m(t_0)+1})}{\eta} \mathbb{E}[\sup_{s \leq T}  \chi_{d_2} (Y_r)] \leq M_6M_7  \frac{ (\delta + \gamma_{m(t_0)+1})}{\eta}. 
$$
Proving the inequality \eqref{eq:Aldous-varquad} is done in a similar way, and achieves the proof.
\end{proof}

\section{Appendix}
\label{section:appendix}

\subsection{General appendix}
\begin{lemma}[Weak convergence and $d_\mathscr{F}$]
Assume that $\mathscr F$ is a star domain with respect to 0 (i.e. if $f\in\mathscr F$ then $\lambda f\in\mathscr F$ for $\lambda\in[0,1]$). Let $(\mu_n),\mu$ be probability measures. If $\lim_{n\to\infty}d_\mathscr{F}(\mu_n,\mu)=0$ and, for every $g\in\mathscr C^\infty_c$, there exists $\lambda>0$ such that $\lambda g\in\mathscr F$, then $(\mu_n)$ converges weakly toward $\mu$. If $\mathscr F\subseteq\mathscr C^1_b$, then $d_{\mathscr F}$ metrizes the weak convergence.
\label{lemma:WeakConv}\end{lemma}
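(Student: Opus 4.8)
The plan is to treat the two assertions separately, the common engine being that $d_{\mathscr F}$-convergence forces $\mu_n(g)\to\mu(g)$ for every $g\in\mathscr C^\infty_c$, which then bootstraps to weak convergence precisely because all the measures involved are probability measures. For this first step, fix $g\in\mathscr C^\infty_c$; by hypothesis there is $\lambda>0$ with $\lambda g\in\mathscr F$, and the star-domain property lets us take $\lambda\le1$, so $|\mu_n(g)-\mu(g)|=\lambda^{-1}|\mu_n(\lambda g)-\mu(\lambda g)|\le\lambda^{-1}d_{\mathscr F}(\mu_n,\mu)\to0$.

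Next I would bootstrap. Since $\mathscr C^\infty_c$ is uniformly dense in $\mathscr C^0_0$ (standard: smooth truncation followed by mollification) and $\mu_n(\R^D)=\mu(\R^D)=1$, a routine triangle-inequality estimate gives $\mu_n(g)\to\mu(g)$ for every $g\in\mathscr C^0_0$. To reach $\mathscr C^0_b$, i.e.\ weak convergence, I would use that $\mu$ is a (necessarily tight) probability measure on $\R^D$: given $f\in\mathscr C^0_b$, which we may assume satisfies $0\le f\le1$, and $\varepsilon>0$, pick a compact $K$ with $\mu(K^c)<\varepsilon$ and $\chi\in\mathscr C^0_c$ with $0\le\chi\le1$ and $\chi\equiv1$ on $K$; then $\mu_n(\chi)\to\mu(\chi)>1-\varepsilon$ yields $\mu_n(1-\chi)<2\varepsilon$ for $n$ large, and since $f\chi\in\mathscr C^0_c\subseteq\mathscr C^0_0$ we have $\mu_n(f\chi)\to\mu(f\chi)$; combining these through $\mu_n(f)-\mu(f)=\mu_n((1-\chi)f)-\mu((1-\chi)f)+\mu_n(f\chi)-\mu(f\chi)$ gives $\limsup_n|\mu_n(f)-\mu(f)|\le3\varepsilon$. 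Letting $\varepsilon\downarrow0$ proves that $(\mu_n)$ converges weakly to $\mu$.

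For the second assertion, the forward implication is precisely the first part (the standing hypothesis that $\mathscr C^\infty_c\subseteq\mathscr F$ up to scaling is still in force), and $d_{\mathscr F}$ is genuinely a metric on probability measures because $\mathscr F$ then contains, up to multiplicative constants, the point-separating algebra $\mathscr C^\infty_c$, as recalled after \cite[Theorem~4.5.(a), Chapter~3]{EK86}. For the converse I would note that $\mathscr F\subseteq\mathscr C^1_b$, read — as in all the applications in this paper — as providing a uniform bound on $\|f\|_\infty+\|f'\|_\infty$ over $f\in\mathscr F$, makes $\mathscr F$ equi-bounded and equi-Lipschitz, hence $d_{\mathscr F}\le C\,d_{FM}$ where $d_{FM}$ is the Fortet–Mourier (bounded-Lipschitz) distance; since $d_{FM}$ metrizes weak convergence on $\R^D$, weak convergence of $\mu_n$ to $\mu$ forces $d_{FM}(\mu_n,\mu)\to0$ and therefore $d_{\mathscr F}(\mu_n,\mu)\to0$.

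The only genuinely delicate point is this last one: some equicontinuity control of $\mathscr F$ is indispensable, since for a non-equi-Lipschitz family the metrization statement is simply false — take $\mathscr F=\{x\mapsto\sin(kx):k\in\N\}$ and $\mu_n=\delta_{1/n}$, which converges weakly to $\delta_0$ while $d_{\mathscr F}(\mu_n,\delta_0)=1$ for every $n$ — so "$\mathscr F\subseteq\mathscr C^1_b$" must be understood as delivering a uniform first-order bound over $\mathscr F$, which is automatic for the classes constructed in Theorem~\ref{theorem:APTrPT}. Everything else used above (uniform density of $\mathscr C^\infty_c$ in $\mathscr C^0_0$, the upgrade from vague to weak convergence via tightness of the limit, and the metrization of weak convergence by $d_{FM}$) is classical.
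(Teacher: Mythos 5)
Your proof is correct, and its skeleton is the one the paper uses: scaled $\mathscr C^\infty_c$ test functions ($\lambda g\in\mathscr F$) give $\mu_n(g)\to\mu(g)$, a cutoff argument exploiting that all measures are probabilities upgrades this to every $f\in\mathscr C^0_b$, and the metrization claim is reduced to the Fortet--Mourier distance. The differences are in how the cutoff error is handled and in how the last claim is read. The paper writes, for an arbitrary $g\in\mathscr C^\infty_c$, the bound $|\mu_n(f-fg)-\mu(f-fg)|\leq\|f\|_\infty|\mu_n(1-g)-\mu(1-g)|$ and approximates $fg$ by a $\mathscr C^\infty_c$ function via Weierstrass; as literally written that bound should be $\|f\|_\infty\left(\mu_n(1-g)+\mu(1-g)\right)$ (for $0\leq g\leq1$), which is only useful once $g$ is chosen, by tightness of $\mu$, so that $\mu(1-g)$ is small --- which is exactly what your route does, by first getting convergence on $\mathscr C^0_0$ through uniform density of $\mathscr C^\infty_c$ and then using tightness of the limit to pass to $\mathscr C^0_b$; so your organization is the careful version of the same argument. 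For the second claim, the paper simply invokes \cite[Theorem~5.6]{Che04} and identifies $d_{\mathscr C^1_b}$ with the Fortet--Mourier distance, while you make explicit that the inclusion $\mathscr F\subseteq\mathscr C^1_b$ must be understood as a uniform bound on $\|f\|_\infty+\|f'\|_\infty$ over $\mathscr F$ (your $\{\sin(kx)\}$ example shows the direction ``weak convergence $\Rightarrow d_{\mathscr F}\to0$'' fails without equicontinuity); this precision is legitimate, consistent with the paper's intent, and automatically satisfied by the class $\mathscr F$ of Theorem~\ref{theorem:APTrPT}, which sits inside the unit ball of that norm, so that $d_{\mathscr F}\leq d_{FM}$ as you claim.
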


\begin{proof}
Let $f\in\mathscr C^0_b,g\in\mathscr C^\infty_c$. Note that $fg\in\mathscr C^0_c$ and, using Weierstrass' Theorem, it is well known that, for all $\varepsilon>0$, there exists $\varphi\in \mathscr C^\infty_c$ such that $\|fg-\varphi\|_\infty\leq\varepsilon$. By hypothesis, and since $\mathscr F$ is a star domain, there exists $\lambda>0$ such that $\lambda g,\lambda\varphi\in\mathscr F$. Then,
\[\left|\mu_n(fg)-\mu(fg)\right|\leq
\left|\mu_n(fg)-\mu_n(\varphi)\right|+
\frac1\lambda\left|\mu_n(\lambda\varphi)-\mu(\lambda\varphi)\right|+
\left|\mu(fg)-\mu(\varphi)\right|,\]
thus $\limsup_{n\to\infty}\left|\mu_n(fg)-\mu(fg)\right|\leq2\varepsilon$. Now,
\begin{align*}
\left|\mu_n(f)-\mu(f)\right|&\leq
\left|\mu_n(f-fg)-\mu(f-fg)\right|+\left|\mu_n(fg)-\mu(fg)\right|\\
&\leq\|f\|_\infty\left|\mu_n(1-g)-\mu(1-g)\right|+\left|\mu_n(fg)-\mu(fg)\right|\\
&\leq\frac{\|f\|_\infty}\lambda\left|\mu_n(\lambda g)-\mu(\lambda g)\right|+\left|\mu_n(fg)-\mu(fg)\right|
\end{align*}
so that $\limsup_{n\to\infty}\left|\mu_n(f)-\mu(f)\right|\leq2\varepsilon$, for any $\varepsilon>0$, which concludes the proof.

Now, assuming $\mathscr F\subseteq\mathscr C_b^1$, use \cite[Theorem~5.6]{Che04}. Then, convergence with respect to $d_{\mathscr F}$ is equivalent to weak convergence. Indeed, $d_{\mathscr C_b^1}$ is the well-known Fortet-Mourier distance, which metrizes the weak topology. It is also the Wasserstein distance $\Wass_\delta$, with respect to the distance $\delta$ such that
\[\forall x,y\in\R^D,\quad\delta(x,y)=\sup_{f\in\mathscr C_b^1}|f(x)-f(y)|=|x-y|\wedge 2.\]
See also \cite[Theorem~4.4.2.]{RKSF13}.
\end{proof}


\begin{proof}[Proof of Lemma~\ref{lemma:mart}]
Let $\mathscr{F}_n=\sigma(y_0,\dots,y_n)$ be the natural filtration. Classically, we have
\begin{align*}
\mathbb{E}[ \widehat{M}^{f}_{n+1} \ | \ \mathscr{F}_n ] 
&= \mathbb{E}[ f(y_{n+1}) - f(y_0) - \sum_{k=0}^{n} \gamma_{k+1} \mathcal{L}_{k} f(y_k) \ | \ \mathscr{F}_n ] \\
&= f(y_n) + \gamma_{n+1} \mathcal{L}_n f(y_n) - f(y_0) - \sum_{k=0}^{n} \gamma_{k+1} \mathcal{L}_{k} f(y_k)\\
&= \widehat{M}^{f}_n.
\end{align*}
Moreover,
\begin{align*}
\mathbb{E}[ (\widehat{M}^{f}_{n+1})^2 \ | \ \mathscr{F}_n ] 
&= \mathbb{E}\left[ \left.f(y_{n+1})^2 + f(y_0)^2 + \left(\sum_{k=0}^{n} \gamma_{k+1} \mathcal{L}_{k} f(y_k)\right)^2 \ \right| \ \mathscr{F}_n \right] \\
&\quad  \ - \mathbb{E}\left[\left.  2 f(y_{n+1}) \left( f(y_0) + \sum_{k=0}^{n} \gamma_{k+1} \mathcal{L}_{k} f(y_k)\right) \ \right| \ \mathscr{F}_n \right] \\
&\quad  \ + \mathbb{E}\left[\left. 2 f(y_0) \left(\sum_{k=0}^{n} \gamma_{k+1} \mathcal{L}_{k} f(y_k)\right) \ \right| \ \mathscr{F}_n \right] \\
&= f(y_{n})^2 + \gamma_{n+1} \mathcal{L}_n f^2(y_n) + f(y_0)^2 + \left(\sum_{k=0}^{n} \gamma_{k+1} \mathcal{L}_{k} f(y_k) \right)^
2 \\
&\quad  \ -  2 (f(y_{n}) + \gamma_{n+1} \mathcal{L}_n f(y_n)) \left( f(y_0) + \sum_{k=0}^{n} \gamma_{k+1} \mathcal{L}_{k} f(y_k)\right)  \\
&\quad  \ + 2 f(y_0) \left(\sum_{k=0}^{n} \gamma_{k+1} \mathcal{L}_{k} f(y_k)\right).
\end{align*}
Henceforth,
\begin{align*}
\mathbb{E}[ (\widehat{M}^{f}_{n+1})^2 \ | \ \mathscr{F}_n ]&=\gamma_{n+1} \mathcal{L}_n f^2(y_n) + 2 \gamma_{n+1} \mathcal{L}_{n} f(y_n) \left(\sum_{k=0}^{n-1} \gamma_{k+1} \mathcal{L}_{k} f(y_k) \right) +  (\gamma_{n+1} \mathcal{L}_{n} f(y_n))^2\\
&\quad  \ -  2 f(y_{n})\gamma_{n+1} \mathcal{L}_{n} f(y_n) -2 \gamma_{n+1} \mathcal{L}_n f(y_n) \left( f(y_0) + \sum_{k=0}^{n} \gamma_{k+1} \mathcal{L}_{k} f(y_k)\right)  \\
&\quad  \ + 2 f(y_0)  \gamma_{n+1} \mathcal{L}_{n} f(y_n) + (m_n^f)^2 \\
&=(\widehat{M}_n^f)^2 + \gamma_{n+1} \mathcal{L}_n f^2(y_n) - (\gamma_{n+1} \mathcal{L}_{n} f(y_n))^2 -  2 f(y_{n})\gamma_{n+1} \mathcal{L}_{n} f(y_n)\\
&=(\widehat{M}_n^f)^2+\gamma_{n+1}\Gamma_nf.
\end{align*}

Now, on the first hand, using Assumption~\ref{assumption:variance},
\begin{align*}
\mathbb{E}\left[ \langle \widehat{M}^{\chi_{d_2}} \rangle_N \right]
&= \mathbb{E}\left[ \sum_{k=0}^{N-1} \gamma_{k+1} \Gamma_{k+1} \chi_{d_2}(y_k) \right]
\leq M_6 \sum_{k=0}^{N-1} \gamma_{k+1}   \mathbb{E}\left[  \chi_{d}(y_k) \right]
\leq  M_2M_6 \sum_{k=0}^{N-1}   \gamma_{k+1},
\end{align*}
and then Doob's inequality gives
$$
\mathbb{E}\left[\left(\sup_{n \leq N} \widehat{M}_n^{\chi_{d_2}}\right)^{2}\right]^{1/2}  \leq 2 \mathbb{E}\left[ \langle \widehat{M}^{\chi_{d_2}} \rangle_N \right]^{1/2} \leq C,
$$
for some constant $C$, only depending on $N$. On the other hand, from Lemma~\ref{lemma:mart} and Assumption~\ref{assumption:variance},
$$
\sup_{n \leq N} \chi_{d_2}(y_n) \leq \chi_{d_2}(y_0) +M_6\sum_{k=0}^{N-1}  \gamma_{k+1}\sup_{n\leq k} \chi_{d_2}(y_n) + \sup_{n \leq N} \widehat{M}_n^{\chi_{d_2}}.
$$
Using the triangle inequality, we then have
\begin{align*}
\mathbb{E}\left[\left(\sup_{n \leq N} \chi_{d_2}(y_n) \right)^{2}\right]^{1/2}
&\leq  
\mathbb{E}\left[\left(\chi_{d_2}(y_0) \right)^{2}\right]^{1/2}
+ M_6 \sum_{k=0}^{N-1} \gamma_{k+1}
\mathbb{E}\left[\left(\sup_{n\leq k} \chi_{d_2}(y_n) \right)^{2}\right]^{1/2}\\
&\quad+ \mathbb{E}\left[\left(\sup_{n \leq N} \widehat{M}_n^{\chi_{d_2}}\right)^{2}\right]^{1/2}.
\end{align*}
Then, using (discrete) Gr\"onwall's Lemma as well as Cauchy-Schwarz's inequality ends the proof.
\end{proof}

\subsection{Appendix for the penalized bandit algorithm}

\begin{proof}[Proof of Proposition~\ref{proposition:regPDMP}]
The unique solution of the ordinary differential equation $y'(t)=a-by(t)$ with initial condition $x$ is given by
\[\Psi(x,t) =
\left\{ \begin{array}{ll}
\left(x-\frac ab\right)\e^{-bt}+\frac ab &\text{if }b>0\\
x+at&\text{if }b=0
\end{array} \right..\]

Firstly, assume that $b>0$ and let $t\in[0,T]$. We have, for $x>0$
\begin{align}
&P_t f(x) 
= \mathbb{E}_x\left[ f(X_t) \right]= f\left(\Psi(x,t)\right) \mathbb{P}_x\left( T> t \right) + \mathbb{E}_x\left[ f(X_t)|T\leq t \right]\mathbb{P}_x\left( T\leq t \right)\notag\\
&\quad= f\left(\Psi(x,t)\right) \exp\left(-\int_0^t (c+d\Psi(x,s)) ds \right)\notag\\
&\quad+ \int_0^t P_{t-u} f(\Psi(x,u)+1) (c+d\Psi(x,u))\exp\left(-\int_0^u (c+d\Psi(x,s))  ds \right) du.\label{eq:proofPropregPDMP}
\end{align}
At this stage, the smoothness of the right-hand side of \eqref{eq:proofPropregPDMP} with respect to $x$ is not clear. Let $0<\varepsilon<\min(a/b,1/2)$. If $0\leq x\leq a/b-\varepsilon$, use the substitution
\[v=\Psi(x,u),\quad u=\varphi(x,v)=\frac1b\log\left(\frac{x-\frac ab}{v-\frac ab}\right),\]
to get
\begin{align*}
P_t f(x) 
&= f\left(\Psi(x,t)\right) \exp\left(-\int_0^t (c+d\Psi(x,s)) ds \right)\\
&\quad+ \int_x^{\Psi(x,t)} P_{t-\varphi(x,v)} f(v +1) \exp\left(-\int_0^{\varphi(x,v)} (c+d\Psi(x,s))  ds \right)\frac{c+dv}{a-bv}dv.
\end{align*}
Note that $\Psi(x,t)\leq \Psi(a/b-\varepsilon,t)<a/b$, so that $a-bv\neq0$. Since $s\mapsto P_sf(x)$, $\Psi$, $\varphi$ and $f$ are smooth, $x\mapsto P_tf(x)\in\mathscr C^N([o,a/b-\varepsilon])$. The reasoning holds with the same substitution for $x\geq a/b+\varepsilon$, so that $P_tf\in\mathscr C^N(\R_+\backslash\{a/b\})$. Now, if $x>a/b-\varepsilon$, for any $u>0$,
\[\Psi(x,u)+1\geq a/b+1-\varepsilon\geq a/b+\varepsilon,\]
so $x\mapsto P_{t-u}f(\Psi(x,u)+1)$ is smooth. Thus the right-hand side of \eqref{eq:proofPropregPDMP} is smooth as well and $P_tf\in\mathscr C^N(\R_+)$.

Now, let us show that the semigroup generated by $\mathcal{L}$ has bounded derivatives. Note that it is possible to mimic this proof for the example of the WRW treated in Section~\ref{subsection:randomwalk} when the derivatives of $P_tf$ are not explicit. Let $\mathcal A_nf=f^{(n)}$, $\mathcal Jf(x)=f(x+1)-f(x)$ and $\psi_n(s) = P_{t-s}\mathcal A_n P_sf$ for $0\leq n\leq N$. So, $\psi_n'(s) = P_{t-s} (\mathcal A_n \mathcal{L} - \mathcal{L}\mathcal A_n ) P_s f$. It is clear that $\mathcal A_{n+1}=\mathcal A_1\mathcal A_n$, that $\mathcal A_n\mathcal J=\mathcal J\mathcal A_n$ and that
\[\mathcal{L} g(x)= (a-bx) \mathcal A_1g(x) +(c+dx) \mathcal Jg(x).\]
It is straightforward by induction that
\[\mathcal A_n\mathcal L g= \mathcal L \mathcal A_n g - nb \mathcal A_ng + nd\mathcal J\mathcal A_{n-1}g,\]
so the following inequality holds:
\[\left(\mathcal A_n\mathcal L-\mathcal L\mathcal A_n\right)g\leq-nb\mathcal A_ng+2|d|n\|\mathcal A_{n-1}g\|_\infty.\]
Hence,
\[\psi_n'(s) \leq -nb \psi_n(s) + 2|d|n\|\mathcal A_{n-1}P_sf\|_\infty.\]
In particular, $\psi_1'(s) \leq -b \psi_1(s) +2d\|f\|_\infty$, so, by Gr\"onwall's inequality,
\[\psi_1(s) \leq \left(\psi_1(0)-\frac{2|d|}b\|f\|_\infty\right)e^{-bs}+\frac{2|d|}b\|f\|_\infty\leq \|f'\|_\infty+\frac{2d}b\|f\|_\infty.\]
Let us show by induction that
\begin{equation}
\psi_n(s)\leq\sum_{k=0}^n{\left(\frac{2|d|}b\right)^{n-k}\|f^{(k)}\|_\infty}.
\label{eq:InducProofProp}
\end{equation}
If \eqref{eq:InducProofProp} is true for some $n\geq1$ (we denote by $K_n$ its right-hand side), then for all $t<T$, $\psi_n(t)\leq K_n$ and, since $\mathcal A_nP_t(-f)=-\mathcal A_nP_tf$, $|\psi_n(t)|\leq K_n$, so $\|\mathcal A_nP_sf\|_\infty\leq K_n$. Then, we deduce that $\psi_{n+1}'(s)\leq-(n+1)b\psi_{n+1}(s)+2(n+1)dK_n$. Use Gr\"onwall's inequality once more to have $\psi_{n+1}(s)\leq K_{n+1}$ and achieve the proof by induction. In particular, taking $s=t$ in \eqref{eq:InducProofProp} provides $\mathcal A_nP_tf\leq K_n$
and, since $\mathcal A_nP_t(-f)=-\mathcal A_nP_tf$, $\mathcal A_nP_tf\leq K_n$. As a conclusion, for $n\in\{0,\dots,N\},$
\[\|\mathcal (P_tf)^{(n)}\|_\infty\leq\sum_{k=0}^n{\left(\frac{2|d|}b\right)^{n-k}\|f^{(k)}\|_\infty},\]
which concludes the proof when $b>0$.

The case $b=0$ is dealt with in a similar way. We use the substitution $\varphi(x,v)=(v-x)/a$ in \eqref{eq:proofPropregPDMP}, which is enough to prove smoothness (this time, $\Psi(x,\cdot)$ is a diffeomorphism for any $x\geq0$), and it is easy to mimic the proof to obtain the following estimates, for $s\leq t$,
\[|\psi_{n}(s)|\leq\sum_{k=0}^n\frac{n!}{k!}(2|d|T)^{n-k}\|f^{(k)}\|_\infty.\]
\end{proof}

\begin{proof}[Proof of Lemma~\ref{lemma:truncationAPT}]
First, we shall prove that Assumption~\ref{assumption:convergence} holds; let
\[y\in\text{Supp}(\mathscr L(y_n^{(l,\delta)}))=[0,\delta\sqrt n].\]
Note that $\widetilde I_n^0(y),I_n^0(y)\leq1$ and $\widetilde I_n^1(y),I_n^1(y)\leq0$, so if $y_n^{(l,\delta)}\leq\delta\gamma_{n+1}^{-1}-1$, then $y_{n+1}^{(l,\delta)}\leq\delta\gamma_{n+1}^{-1}$. For $f\in\mathscr F$,
\begin{align*}
&|\mathcal L_n^{(l,\delta)}f(y)-\mathcal L_nf(y)|\leq\gamma_{n+1}^{-1}\E\left[\left.f(y_{n+1}^{(l,\delta)})-f(y_{n+1})\right|y_n=y_n^{(l,\delta)}=y\right]\\
&\quad\leq\frac{\indic_{y\geq\delta\gamma_{n+1}^{-1}-1}}{\gamma_{n+1}}\Big(p_0(1-\gamma_ny)\left|f(\delta\gamma_{n+1}^{-1})-f(y+I_n^0(y))\right|\\
&\quad\quad+\widetilde p_0(1-\gamma_ny)\left|f(\delta\gamma_{n+1}^{-1})-f(y+\widetilde I_n^0(y))\right|\Big)\\
&\quad\leq\frac{\|f'\|_\infty\indic_{y\geq\delta\gamma_{n+1}^{-1}-1}}{\gamma_{n+1}}\left(p_0(1-\gamma_ny)+\widetilde p_0(1-\gamma_ny)\right)\leq\frac{y+1}\delta\|f'\|_\infty\indic_{y\geq\delta\gamma_{n+1}^{-1}-1}\\
&\quad\leq\frac{(y+1)^2}{\delta^2}\|f'\|_\infty\gamma_{n+1}.
\end{align*}
Using this inequality with \eqref{eq:PBP5}, we can explicit the convergence of $\mathcal L_n^{(l,\delta)}$ toward $\mathcal L$ defined in \eqref{eq:PBP6}:
\begin{align}
|\mathcal L_n^{(l,\delta)}f(y)-\mathcal Lf(y)|&\leq |\mathcal L_n^{(l,\delta)}f(y)-\mathcal L_nf(y)|+|\mathcal L_nf(y)-\mathcal Lf(y)|\notag\\
&=\chi_3(y)(\|f\|_\infty+\|f'\|_\infty+\|f''\|_\infty)\mathcal O(\gamma_n).
\label{eq:proofTruncatedChain1}
\end{align}
Note that the notation $\mathcal O$ depends here on $l$ and $\delta$, but is uniform over $y$ and $f$.

Assumption~\ref{assumption:regularity} holds, since it takes into account only the limit process generated by $\mathcal L$, and it is a consequence of Proposition~\ref{proposition:regPDMP}: for $n\leq3$,
\[\|\mathcal (P_tf)^{(n)}\|_\infty\leq\sum_{k=0}^n{\left(\frac{2|p_0'(1)|}{p_1(1)}\right)^{n-k}\|f^{(k)}\|_\infty}.\]

Now, we shall check a Lyapunov criterion for the chain $(y_n^{(l,\delta)})_{n\geq0}$, in order to ensure Assumption~\ref{assumption:moments}. Taking $V(y)= \e^{\theta y}$, where (small) $\theta>0$ will be chosen afterwards, we have, for $n\geq l$ and $y\leq\delta\gamma_n^{-1}$,
\begin{align*}
\mathcal L_n^{(l,\delta)} V(y) &\leq\gamma_{n+1}^{-1}\E\left[V((y+I_n(y))\wedge\delta\sqrt n)-V(y)\right]\leq\gamma_{n+1}^{-1}\E\left[V(y+I_n(y))-V(y)\right]\\
&\leq V(y)  \sqrt{n+1} \left( \E[\e^{\theta I_n(y)}] - 1\right).
\end{align*}
Let $\varepsilon>0$; we are going to decompose $I_n(y)$. The first term is
\begin{align*}
&\sqrt{n+1} \left(\exp\left(\frac{\sqrt{n+1} - \sqrt{n} - 1}{\sqrt{n}} \theta y \right) - 1\right) p_1(1 - \gamma_n y)\\
&\quad\leq\sqrt{n+1} \left(\frac{\sqrt{n+1} - \sqrt{n} - 1}{\sqrt{n}} \theta y + \frac{1}{2}\left( \frac{\sqrt{n+1} - \sqrt{n} -1}{\sqrt{n}} \theta y \right)^2 \right) p_1(1 - \gamma_n y)\\
&\quad\leq \left(-\alpha_n \theta y + \frac{\alpha_n^2}{2\sqrt{n+1}} \theta^2  y^2 \right)p_1(1 - \gamma_n y)\leq\theta y\left(-\alpha_n + \frac{\alpha_n^2}2 \theta  \delta \right)p_1(1 - \gamma_n y)\\
&\quad\leq\left(\varepsilon+\left(-1+\frac{\theta\delta}2\right)\right)\theta y\quad\text{for $n$ large.}
 \end{align*}
where $\alpha_n = \left(1- \sqrt{n+1} + \sqrt{n}\right) \gamma_n\gamma_{n+1}^{-1}$. There exists $\xi^{(\delta)}$, such that $1-\delta\leq \xi^{(\delta)}\leq1$ and the second term writes:
\begin{align*}
&\sqrt{n+1} \left(\exp\left(\theta+\frac{\sqrt{n+1} - \sqrt{n} - 1}{\sqrt{n}} \theta y \right) - 1\right) p_0(1 - \gamma_n y)\leq\sqrt{n+1}p_0(1-\gamma_n y)(\e^\theta-1)\\
&\quad\leq-\sqrt{n+1}\gamma_nyp_0'(\xi^{(\delta)})(\e^\theta-1)\leq\left(\varepsilon-(\e^\theta-1) p_0'(1)\right) y\quad\text{for $n$ large.}
 \end{align*}
The third term is negative, and the fourth term writes:
\begin{align*}
&\sqrt{n+1}\left(\exp\left(\frac\theta{\sqrt{n+1}}+\frac{n- \sqrt{n(n+1)}}{\sqrt{n(n+1)}} \theta y\right) - 1\right)\widetilde p_0(1 - \gamma_n y)\\
&\quad \leq\sqrt{n+1}\left(\exp\left(\frac\theta{\sqrt{n+1}}\right) - 1\right)\leq\theta+\varepsilon\quad\text{for $n$ large.}
\end{align*}
Hence, there exists some (deterministic) $n_0\geq l$ such that, for $n\geq n_0$,
\[\mathcal L_n^{(l,\delta)}V(y)\leq V(y)\left[\theta+\varepsilon-y\left(p_0'(1)(\e^\theta-1)-\left(\theta+\frac{\theta\delta}2\right)p_1(1)+\epsilon(1+\theta)\right)\right].\]
Then, for $\varepsilon,\delta,\theta$ small enough, there exists $\widetilde\alpha>0$ such that, for $n\geq n_0$ and for any $M\geq\widetilde(\theta+\epsilon)\alpha^{-1}$,
\[\mathcal L_n^{(l,\delta)}V(y)\leq V(y)(\theta+\varepsilon-\widetilde\alpha y)\leq-(\widetilde\alpha M-\theta-\varepsilon)V(y)+\widetilde\alpha MV(M).\]
Then, Assumption~\ref{assumption:moments}.iii holds with
\[\alpha=\left(p_0'(1)(\e^\theta-1)-\left(\theta+\frac{\theta\delta}2\right)p_1(1)+\epsilon(1+\theta)\right)M-\theta-\varepsilon,\quad\beta=\widetilde\alpha MV(M).\]

Finally, checking Assumption~\ref{assumption:variance} is easy (using \eqref{eq:proofTruncatedChain1} for instance) with $d_2=3$, which forces us to set $d=6$ (since $\Gamma_n\chi_3\leq M_6\chi_6$). The chain $(y_n^{(l,\delta)})_{n\geq0}$ satisfying a Lyapunov criterion with $V(y)=\e^{\theta y}$, its moments of order 6 are also uniformly bounded.
\end{proof}

\subsection{Appendix for the decreasing step Euler scheme}

\begin{proof}[Proof of Lemma~\ref{lemma:DerivativesDiffusion}]
Applying It\^o's formula with $x\mapsto |x|^p$, we get
\begin{align}
|\partial_xX_t^x|^p&=1+\int_0^tp\left(b'(X_s^x)|\partial_xX_s^x|^p+ \frac{p-1}2(\sigma'(X_s^x))^2|\partial_xX_s^x|^p\right)ds\notag\\
&\quad+\int_0^tp\sigma'(X_s^x)|\partial_xX_s^x|^pdW_s\notag\\
&\leq1+C\int_0^t|\partial_xX_s^x|^pds+\int_0^tp\sigma'(X_s^x)|\partial_xX_s^x|^pdW_s,\label{eq:SDE_Yp}
\end{align}
where $C=p\|b'\|_\infty+\frac{p(p-1)}2\|\sigma'\|_\infty^2$. Let us show that $\int_0^tp\sigma'(X_s^x)|\partial_xX_s^x|^pdW_s$ is a martingale. To that end, since $|\partial_xX_t^x|^p$ is non-negative and $(x+y+z)^2\leq2(x^2+y^2+z^2)$, we use the Burkholder–Davis–Gundy's inequality so there exists a constant $C'$ such that,
\begin{align*}
|\partial_xX_t^x|^p&\leq1+C\int_0^t\sup_{u\in[0,s]}|\partial_xX_u^x|^pds+\int_0^tp\sigma'(X_s^x)|\partial_xX_s^x|^pdW_s\\
\sup_{u\in[0,t]}|\partial_xX_u^x|^p&\leq1+C\int_0^t\sup_{u\in[0,s]}|\partial_xX_u^x|^pds+\sup_{u\in[0,t]}\int_0^up\sigma'(X_s^x)|\partial_xX_s^x|^pdW_s\\
\E\left[\sup_{u\in[0,t]}|\partial_xX_u^x|^{2p}\right]&\leq2+2C^2T\int_0^t\E\left[\sup_{u\in[0,s]}|\partial_xX_u^x|^{2p}\right]ds\\
&\quad+2\E\left[\left(\sup_{u\in[0,t]}\int_0^up\sigma'(X_s^x)|\partial_xX_s^x|^pdW_s\right)^2\right]\\
&\leq2+2C^2T\int_0^t\E\left[\sup_{u\in[0,s]}|\partial_xX_u^x|^{2p}\right]ds+2C'\int_0^t\E[\sigma'(X_s^x)^2|\partial_xX_s^x|^{2p}]ds\\
&\leq2+2C^2T\int_0^t\E\left[\sup_{u\in[0,s]}|\partial_xX_u^x|^{2p}\right]ds\\
&\quad+2C'\|\sigma'\|_\infty^2\int_0^t\E\left[\sup_{u\in[0,s]}|\partial_xX_u^x|^{2p}\right]ds\\
&\leq2\exp\left((C^2T+C'\|\sigma'\|_\infty^2)T\right)\quad\text{by Grönwall's Lemma}.
\end{align*}
Hence, $\int_0^tp\sigma'(X_s^x)|\partial_xX_s^x|^pdW_s$ is a martingale and, taking the expected values in \eqref{eq:SDE_Yp} and applying Grönwall's lemma once again, we have
\[\E[|\partial_xX_t^x|^p]\leq\exp\left(\left(p\|b'\|_\infty+\frac{p(p-1)}2\|\sigma'\|_\infty^2\right)T\right).\]
Using Hölder's inequality for $p=2$ completes the case of the first derivative.

Since the following computations are more and more tedious, we choose to treat only the case of the second derivative. Note that $\partial_x^2X^x_t$ exists and satisfies the following SDE:
\begin{align*}
\partial_x^2X^x_t&=\int_0^t\left(b'(X^x_s)\partial_x^2X^x_s +b''(X^x_s)(\partial_xX^x_s)^2\right)ds\\
&\quad+\int_0^t\left(\sigma'(X^x_s)\partial_x^2X^x_s+\sigma''(X^x_s)(\partial_xX^x_s)^2\right)dW_s.
\end{align*}
Itô's formula provides us the following inequation:
\begin{align*}
|\partial_x^2X^x_t|^p&\leq C_1\int_0^t|\partial_x^2X^x_s|^pds+C_2\int_0^t|\partial_x^2X^x_s|^{p-1}|\partial_xX^x_s|^2ds+C_3\int_0^t|\partial_x^2X^x_s|^{p-2}|\partial_xX^x_s|^4ds\\
&\quad+\int_0^tp\bigg(|\partial_x^2X^x_s|^p\sigma'(X^x_s)+|\partial_x^2X^x_s|^{p-1}\sgn(\partial^2_xX^x_s)\sigma''(X^x_s)|\partial_xX^x_s|^2\bigg)dW_s,
\end{align*}
with constants $C_i$ depending on $p,\|b'\|_\infty,\|b''\|_\infty,\|\sigma'\|_\infty,\|\sigma''\|_\infty$. The last term proves to be a martingale, with similar arguments as above. We take the expected values, and apply Hölder's inequality twice to find, for $p>2$,
\begin{align*}
\E\Big[|\partial_x^2X^x_t|^p\Big]&\leq C_1\int_0^t\E\Big[|\partial_x^2X^x_s|^p\Big]ds +C_2\int_0^t\E\Big[|\partial_x^2X^x_s|^{p-1}|\partial_xX^x_s|^2\Big]ds\\
&\quad+C_3\int_0^t\E\Big[|\partial_x^2X^x_s|^{p-2}|\partial_xX^x_s|^4\Big]ds\\
&\leq C_1\int_0^t\E\Big[|\partial_x^2X^x_s|^p\Big]ds +C_2\int_0^t\E\Big[|\partial_x^2X^x_s|^p\Big]^{\frac{p-1}p}\E\Big[|\partial_xX^x_s|^{2p}\Big]^{\frac1p}ds\\
&\quad+C_3\int_0^t\E\Big[|\partial_x^2X^x_s|^p\Big]^{\frac{p-2}p}\E\Big[|\partial_xX^x_s|^{2p}\Big]^{\frac2p}ds\\
&\leq C_3\e^{C_4T}+C_1\int_0^t\E\Big[|\partial_x^2X^x_s|^p\Big]ds +(C_2+C_3)\e^{C_4T}\int_0^t\E\Big[|\partial_x^2X^x_s|^p\Big]^{\frac{p-1}p}ds,
\end{align*}
with $C_4=4\|b'\|_\infty+2(p-1)\|\sigma'\|_\infty^2$. The case $p=2$ is deduced straightforwardly:
\[\E\Big[|\partial_x^2X^x_t|^2\Big]\leq C_3\e^{C_4T}+C_1\int_0^t\E\Big[|\partial_x^2X^x_s|^2\Big]ds +C_3\e^{C_4T}\int_0^t\E\Big[|\partial_x^2X^x_s|^2\Big]^{\frac12}ds.\]
Regardless, since the unique solution of $u=Au+Bu^\alpha$ is
\[u(t)=\left(\left(u(0)^{1-\alpha}+\frac BA\right)\exp(A(1-\alpha)t)-\frac BA\right)^\frac1{1-\alpha},\]
for $A,B>0,\alpha\in(0,1),u(0)>0$, we have
\begin{align*}
\E\Big[|\partial_x^2X^x_t|^2\Big]&\leq \left(\left(C_2^\frac1p\e^{\frac{C_4}pT}+\frac{C_2+C_3}{C_1}\e^{C_4T}\right)\e^{\frac{C_1}pT}-\frac{C_2+C_3}{C_1}\e^{C_4T}\right)^p\\
&\leq \left(C_2^\frac1p\e^{\frac{C_4}pT}+\frac{C_2+C_3}{C_1}\e^{C_4T}\right)^p\e^{C_1T}.
\end{align*}
The same reasoning for the third derivative achieves the proof.
\end{proof}
\begin{remark}[Regularity of general diffusion processes]
The quality of approximation of a diffusion process is not completely unrelated to its regularity, see for instance \cite[Theorem~1.3]{HHJ15}. In higher dimension, smoothness is generally checked under Hörmander conditions (see e.g. \cite{Hai11,HHJ15}).
\end{remark}

\begin{acknowledgements}
The authors would like to thank Jean-Christophe Breton, Florent Malrieu, Eva Löcherbach and the referees for their attentive reading and comments, as well as Pierre Monmarché for redactional issues. This work was financially supported by the ANR PIECE (ANR-12-JS01-0006-01), the SNF (grant 149871), the Chair \emph{Modélisation Mathématique et Biodiversité}, and an outgoing mobility grant from the Université Européenne de Bretagne. This article is part of the Ph.D. thesis of F.B., which is supported by the Centre Henri Lebesgue (programme "Investissements d'avenir" ANR-11-LABX-0020-01).
\end{acknowledgements}

\bibliography{Biblio}

\end{document}